\newtheorem{Th}{Theorem}[section]
\newtheorem{Rem}[Th]{Remark}
\newtheorem{Ex}[Th]{Example}
\newtheorem{Lemma}[Th]{Lemma}
\newtheorem{Def}[Th]{Definition}
\newtheorem{Prop}[Th]{Proposition}
\newtheorem{Cor}[Th]{Corollary}
\renewcommand{\section}%
   {\setcounter{equation}{0}\@startsection {section}{1}{\z@}{-3.5ex plus -1ex
  minus -.2ex}{2.3ex plus .2ex}{\Large\bf}}
\def\supp{\mathop{\rm supp}\nolimits}
\def\Im{\mathop{\rm Im}\nolimits}
\def\psh{\mathop{\rm psh}\nolimits}
\def\Ass{\mathop{\rm Ass}\nolimits}
\def\Ext{\mathop{\rm Ext}\nolimits}
\def\Ker{\mathop{\rm Ker}\nolimits}
\def\coker{\mathop{\rm coker}\nolimits}
\def\indlim{\mathop{\rm ind\,lim}}
\def\projlim{\mathop{\rm proj\,lim}}
\def\ch{\mathop{\rm ch}\nolimits}
\def\ds{\displaystyle}
\def\R{\mathbb R}
\def\C{\mathbb C}
\def\N{\mathbb N}
\newcommand{\D}{\mathcal{D}}
\newcommand{\E}{\mathcal{E}}
\newcommand{\F}{\mathcal{F}}
\newcommand{\M}{\mathcal{M}}
\newcommand{\W}{\mathcal{W}}
\newcommand{\I}{\mathcal{I}}
\newcommand{\Sch}{\mathcal{S}}
\newcommand{\beqsn}{\arraycolsep1.5pt\begin{eqnarray*}}
\newcommand{\eeqsn}{\end{eqnarray*}\arraycolsep5pt}
\newcommand{\beqs}{\arraycolsep1.5pt\begin{eqnarray}}
\newcommand{\eeqs}{\end{eqnarray}\arraycolsep5pt}
\title{The Overdetermined Cauchy Problem for  $\omega$-ultradifferentiable 
Functions}
\author[Boiti]{Chiara Boiti}
\address{
Dipartimento di Matematica e Informatica \\Universit\`a di Ferrara\\
Via Ma\-chia\-vel\-li n.~30\\
I-44121 Ferrara\\
Italy}
\email{chiara.boiti@unife.it\\
elisabetta.gallucci@student.unife.it}
\author[Gallucci]{Elisabetta Gallucci}
\begin{document}

\keywords{Overdetermined systems, ultradifferentiable functions, 
Phragm\'en-Lindel\"of principle}
\subjclass[2010]{35N05, 46F05, 46E10, 32A15}

\begin{abstract}
In this paper we study the Cauchy problem for overdetermined systems
of linear partial differential operators with constant coefficients in some
spaces of $\omega$-ultradifferentiable functions in the sense of Braun, Meise 
and Taylor \cite{BMT}, for non-quasianalytic weight functions $\omega$.
We show that existence of solutions of the Cauchy problem is equivalent to the
validity of a Phragm\'en-Lindel\"of principle for entire and plurisubharmonic 
functions on some irreducible affine algebraic varieties.
\end{abstract}

\maketitle
\markboth{\sc The overdetermined Cauchy problem\ldots }
{\sc C.~Boiti and E.~Gallucci}

  \section{Introduction}
In this paper we consider the Cauchy problem for overdetermined systems
of linear partial differential operators with constant coefficients
in some classes of $\omega-$ultradifferentiable functions, in the
sense of Braun, Meise and Taylor \cite{BMT}.

We consider
 a continuous
increasing weight function $\omega:\ [0,+\infty)\to[0,+\infty)$ satisfying
\beqsn
\hspace*{-5.3cm}
&&(\alpha)\qquad \exists K\geq1:\ \omega(2t)\leq 
K(1+\omega(t))\qquad\forall t\geq0\\
\hspace*{-5.3cm}
&&(\beta)\qquad \int_{1}^{\infty}
\frac{\omega(t)}{t^{2}}dt<\infty\\
\hspace*{-5.3cm}
&&(\gamma)'\qquad \exists\,a\in\mathbb{R},
\,\,b>0\,:\,\,\omega(t)\geq a+b\log(1+t)\qquad
\forall t\geq0\\
\hspace*{-5.3cm}
&&(\delta)\qquad\varphi:\ [0,\infty)\rightarrow[0,\infty),
~~~~~~~\varphi(t):=\omega(e^{t})\ 
\mbox{is convex.}
\eeqsn

With respect to weight functions considered in \cite{BMT},
we weakened their condition
\begin{itemize}
\item [{$(\gamma)$}] ${\displaystyle 
\underset{t\rightarrow\infty}{\lim}\frac{\log(1+t)}{\omega(t)}=0,}$
\end{itemize}
by condition $(\gamma)'$ above, in the spirit of the original paper
of Bj\"{o}rck \cite{Bj}. For this reason in Section~2 we briefly retrace
the paper of Braun, Meise and Taylor \cite{BMT}, defining the spaces
$\E_{\{\omega\}}$ and $\E_{(\omega)}$ of $\omega$-ultradifferentiable functions 
of Roumieu and Beurling type,
but enlightening the
results that are still valid with the weaker condition $(\gamma)'$
and those ones which need the stronger condition $(\gamma)$ 
(cf. also \cite{Fi,G}).
It comes out that
condition $(\gamma)$ is needed in the Roumieu case $\E_{\{\omega\}}$, while
condition $(\gamma)'$ is sufficient in the Beurling case $\E_{(\omega)};$
in particular, the space ${\mathcal E}(\Omega)$ of ${\mathcal C}^{\infty}$
functions on an open set $\Omega\subset\R^N$ can be viewed as
$\E_{(\omega)}(\Omega)$ for $\omega(t)=\log(1+t)$.
The utility of weakening condition $(\gamma)$ by condition $(\gamma)'$ is
clear, for instance, in the forthcoming paper \cite{BJO},  
for the description of the space
$\Sch_\omega$ of $\omega$-ultradifferentiable Schwartz functions.

In Section~3 we investigate the overdetermined Cauchy problem in the
Beurling case. To be more precise, we settle the Cauchy problem in
the frame of Whitney $\omega-$ultradifferentiable functions, 
in the spirit of \cite{N2,BN1,BN3}, in order
to bypass the question of formal coherence of the data, which naturally
arises in the overdetermined case.

Indeed, in the classical Cauchy problem for a linear partial differential 
equation with initial data on a hypersurface, smooth initial data together
with the equation allow to compute the Taylor series of a smooth solution
at any given point of the hypersurface.

This leads, in the case of systems of linear partial
differential equations, to the notion of formally
non-characteristic hypersurface that was considered in \cite{AHLM,AN2,N2}.

In the case of overdetermined systems, the question of the formal
coherence of the data would be particularly intricate, so that 
the above remarks suggest further generalizations of the Cauchy problem,
where the assumption that the initial data are given on a formally 
non-characteristic hypersurface is dropped, and we allow
formal solutions (in the sense of Whitney)
of the given system on any closed subset as initial data.

Using Whitney functions, we
can thus consider a more general framework
in which two quite arbitrary sets are involved. We take $K_{1}$ and
$K_{2}$ closed convex subsets of $\R^N$ with $K_{1}\subsetneq K_{2}$
and $\overline{\mathring{K_{j}}}=K_{j}$ for $j=1,\,2$, thinking
at $K_{1}$ as the set where the initial data are given, and at $K_{2}$
as the set where we want to find a solution of the following Cauchy
problem:
\begin{equation}
\begin{cases}
A_{0}(D)u=f\\
\left.u\right|_{K_{1}}\equiv\varphi,
\end{cases}\label{eq:acqua}
\end{equation}
where $A_{0}(D)$ is an $a_{1}\times a_{0}$ matrix of linear partial
differential operators with constant coefficients, 
$\varphi\in\left(W_{K_{1}}^{(\omega)}\right)^{a_{0}}$,
$f\in\left(W_{K_{2}}^{(\omega)}\right)^{a_{1}}$ are the given Cauchy
data in the Whitney classes of $\omega-$ultradifferentiable functions
of Beurling type on $K_{1}$ and $K_{2}$ respectively, and
$\left.u\right|_{K_{1}}\equiv\varphi$
means that they are equal in the Whitney sense, i.e. with all their
derivatives.

It comes out (see Section~3) that, in order to find a solution 
$u\in\left(W_{K_{2}}^{(\omega)}\right)^{a_{0}}$
of the Cauchy problem (\ref{eq:acqua}), the function $f$ must satisfy
some integrability conditions. These may be written as
\begin{equation}
\begin{cases}
A_{1}(D)f=0\\
\left.f\right|_{K_{1}}\equiv0,
\end{cases}\label{eq:fuoco}
\end{equation}
for a matrix $A_{1}(D)$ of linear partial differential operators
with constant coefficients obtained by a Hilbert resolution of 
$\M={\rm coker}\left(^{t}A_{0}(\zeta):{\mathcal P}^{a_{1}}
\rightarrow{\mathcal P}^{a_{0}}\right):$
\[
0\longrightarrow{\mathcal P}^{a_{d}}
\xrightarrow{^{t}A_{d-1}(\zeta)}{\mathcal P}^{a_{d-1}}
\longrightarrow\ldots\longrightarrow{\mathcal P}^{a_{2}}
\xrightarrow{^{t}A_{1}(\zeta)}{\mathcal P}^{a_{1}}
\xrightarrow{^{t}A_{0}(\zeta)}{\mathcal P}^{a_{0}}
\longrightarrow{\mathcal M}\longrightarrow0,
\]
where ${\mathcal P}=\mathbb{C}[\zeta_{1},\ldots,\zeta_{N}]$.

The rows of the matrix $A_{1}(D)$ give a system of generators for
the module of all integrability conditions for $f$ that can be expressed
in terms of partial differential operators,
and if $A_1(\zeta)\not\equiv0$ we say that the Cauchy problem is 
{\em overdetermined}.

We prove in Theorem \ref{thm:3.4.8} that the Cauchy problem (\ref{eq:acqua}),
for $f$ satisfying (\ref{eq:fuoco}), admits at least a solution
if and only if the following Phragm\'en-Lindel\"{o}f principle holds
for all $\wp\in\Ass(\M)$ and $V=V(\wp)$:
\[
(Ph-L)_{{\rm psh}}\begin{cases}
\forall\alpha\in\N,\,\,\exists\,\beta\in\N,\,\,C>0\,\,\mbox{such that} & 
\mbox{}\\
\mbox{if }u\in\psh(V)\,\,\mbox{satisfies for some costants }
\alpha_{u}\in\N,\,\,c_{u}>0\\
\begin{cases}
u(\zeta)\leq H_{K_{\alpha}^{2}}(\Im\zeta)+\alpha\omega(\zeta)\,,\qquad
\forall\zeta\in V\\
u(\zeta)\leq H_{K_{\alpha_{u}}^{1}}(\Im\zeta)+\alpha_{u}\omega(\zeta)+c_{u}\,,
\qquad\forall\zeta\in V
\end{cases}\\
\mbox{then it also satisfies:}\\
u(\zeta)\leq H_{K_{\beta}^{1}}(\Im\zeta)+\beta\omega(\zeta)+C\,,
\qquad\forall\zeta\in V,
\end{cases}
\]
where $H_{K}$ is the supporting function of the compact set $K$,
$\left\{ K_{\alpha}^{j}\right\} _{\alpha}$ is a sequence of compact
subsets of $K^{j}$ with $K_{\alpha}^{j}\subset\mathring{K}_{\alpha+1}^{j}$
and $K^{j}=\cup_\alpha K_{\alpha}^{j}$ for $j=1,\,2$, $V$ is the
{\em complex characteristic variety} of $\mathcal P/\wp$ defined by
\beqs
\label{Vwp}
V=V(\wp):=
\{\zeta\in\mathbb{C}^{N}:\,p(-\zeta)=0\ \,\forall p\in\wp\},
\eeqs
and $\psh(V)$ is the set of all plurisubharmonic functions on
$V$ (in the sense of Definition~\ref{defpsh}).

Relating the existence of solutions of the Cauchy problem to the validity 
of a Phragm\'en-Lindel\"of principle may be very useful. For instance,
in the case of a Gevrey weight $\omega(t)=t^{1/s}$ , $s>1$, it was found in
\cite{BM} a complete characterization of
algebraic curves V that satisfy the Phragm\'en-Lindel\"of principle, 
by means of Puiseux series expansions on the branches of V at infinity:
it comes out that the exponents and coefficients of the Puiseux series 
expansions are strictly related to the Gevrey order $s$. This implies that,
looking at the Puiseux series expansions at infinity of the complex 
characteristic variety associated to the system $A_0(D)$, we can establish 
in which (small) Gevrey classes the Cauchy problem admits at least a 
solution and in which classes it doesn't work. Since Puiseux series 
expansions can be computed by several computer programs, such as MAPLE 
for instance, this characterization may be very useful.

\section{Ultradifferentiable functions}

In the present section we follow \cite{BMT}, enlightening when
condition $(\gamma)$ below can be weakened by condition $(\gamma)'$ of
Definition \ref{def:2.1.2}.

We recall, from \cite{BMT}, the following:
\begin{Def}
\begin{em}
  \label{1.1}
  Let $\omega:\,[0,\infty)\rightarrow[0,\infty)$
      be a continuous increasing function.
      It will be called a \emph{non-quasianalytic
    weight function
  $\omega\in\mathcal{W}$} if it has the following properties:
  \begin{itemize}
    \item[$(\alpha)$]
$\exists$ $K\geq1:$ $\ \omega(2t)\leq K(1+\omega(t))$~~~~~
      $\quad\forall t\geq0$,
    \item[$(\beta)$]
      ${\displaystyle \int_{1}^{\infty}\frac{\omega(t)}{t^{2}}dt<\infty,}$
    \item[$(\gamma)$]
      ${\displaystyle \underset{t\rightarrow\infty}{\lim}
      \frac{\log(1+t)}{\omega(t)}=0,}$
    \item[$(\delta)$]
      $\varphi:[0,\infty)\rightarrow[0,\infty)$,~~~~~~~$\varphi(t)$
      :=$\omega(e^{t})$
        is convex.
        \end{itemize}
For $z\in\mathbb{C}^{N}$ we write $\omega(z)$ for $\omega(|z|)$,
where $|z|=\sum_{j=1}^{N}|z_{j}|$.
\end{em}
\end{Def}

\begin{Rem}
  \begin{em}
    Condition $(\beta)$ is the condition of non-quasianalyticity and it will
    ensure, in the following, the existence of functions with compact support
    (cf. Remark~\ref{rem218}).
  \end{em}
  \end{Rem}

It will be sometimes possible, and usefull (see \cite{BJO}), to weaken condition
$(\gamma)$ by the following:
\begin{Def}
\begin{em}
\label{def:2.1.2}Let $\omega:\ [0,\infty)\rightarrow[0,\infty)$ be
a continuous increasing function. It will be called a {\em non-quasianalytic
  weight function $\omega\in\W'$}
if it satisfies the conditions $(\alpha),\,(\beta),\,(\delta)$
and
\beqsn
\hspace*{-5.3cm}
\left(\gamma\right)'\qquad\exists\,a\in\mathbb{R},\,\,b>0\,:\,\,
\omega(t)\geq a+b\log(1+t),
\qquad\forall t\geq0.
\eeqsn
Set again $\omega(z)=\omega(|z|)$ for $z\in\mathbb{C}^{N}$.
\end{em}
\end{Def}

Then we
can define the \emph{Young conjugate} $\varphi^{*}$ of $\varphi$
by
\begin{align*}
\varphi^{*}:\mbox{[0,\ensuremath{\infty})} & \longrightarrow\R\\
y & \longmapsto\underset{x\geq0}{\sup}(xy-\varphi(x)).
\end{align*}

There is no loss of generality to assume that $\omega$ vanishes on
$[0,1]$ (cf. also \cite{AJO}). Then $\varphi^*$ has only non-negative
values, it is convex and increasing, satisfies $\varphi^{*}(0)=0$
and $(\varphi^{*})^{*}=\varphi$ (cf. \cite{BMT,BL}).
Moreover if ${\displaystyle \underset{x\rightarrow\infty}{\lim}
  \frac{x}{\varphi(x)}=0}$
then ${\displaystyle \underset{y\rightarrow\infty}{\lim}\frac{y}{\varphi^{*}
    (y)}=0}$.
Note that $(\gamma)$ implies ${\displaystyle
  \underset{x\rightarrow\infty}{\lim}\frac{x}{\varphi(x)}=0}$.

\begin{Ex}
\begin{em}
  The following functions $\omega\in\W'$ are examples of
  non-quasianalytic weight functions (eventually after a change in
  the interval $[0,\delta]$ for suitable $\delta>0$):
\beqs
  \label{eq:(1)-1}
  \omega(t)=&&t^{\alpha},\qquad0<\alpha<1\\
  \label{212}
  \omega(t)=&&(\log(1+t))^{\beta},\qquad\beta\geq1 \\
  \nonumber
\omega(t)=&&t(\log(e+t))^{-\beta}, \qquad\beta>1. 
\eeqs
They are all weight functions also in $\W$, except \eqref{212} for
$\beta=1$ which satisfies $(\gamma)'$ but not $(\gamma)$.
\end{em}
\end{Ex}

As in \cite{BMT}, the following lemmas are valid for $\omega\in\W'$
(see \cite{G} for more details):

\begin{Lemma}
\label{lemma12BMT}
Let $\omega\in\W'$. Then
\[
\omega(x+y)\leq K(1+\omega(x)+\omega(y)), \qquad\forall x,y\in\C^N.
\]
\end{Lemma}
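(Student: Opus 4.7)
The plan is to reduce the claim to condition $(\alpha)$ by monotonicity, exactly as is standard in the Braun--Meise--Taylor setup. Note that nothing in the statement uses $(\beta)$, $(\gamma)'$ or $(\delta)$; only the fact that $\omega$ is continuous, increasing, non-negative, and satisfies $(\alpha)$.

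First I would unwind the convention $\omega(z)=\omega(|z|)$ with $|z|=\sum_j|z_j|$: since this norm satisfies the triangle inequality on $\mathbb{C}^N$, we have $|x+y|\leq |x|+|y|$ for all $x,y\in\mathbb{C}^N$. The problem is thereby reduced to a purely one-dimensional estimate on $[0,\infty)$, namely
\[
\omega(s+t)\leq K(1+\omega(s)+\omega(t)),\qquad \forall s,t\geq 0.
\]

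Next I would use the obvious bound $s+t\leq 2\max(s,t)$. Since $\omega$ is increasing, this gives $\omega(s+t)\leq \omega(2\max(s,t))$. Applying $(\alpha)$ to $\max(s,t)$ yields
\[
\omega(s+t)\leq K\bigl(1+\omega(\max(s,t))\bigr).
\]
Finally, since $\omega\geq 0$ and $\omega$ is increasing, $\omega(\max(s,t))\leq \omega(s)+\omega(t)$. Combining these inequalities with the triangle inequality for the norm $|\cdot|$ gives the desired estimate.

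There is really no main obstacle here: the lemma is a direct repackaging of $(\alpha)$ together with monotonicity of $\omega$ and the triangle inequality for $|\cdot|$, and none of the weaker hypotheses distinguishing $\mathcal{W}'$ from $\mathcal{W}$ enter the argument. This is precisely why the lemma carries over verbatim from \cite{BMT} to the $\mathcal{W}'$ setting.
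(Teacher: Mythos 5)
Your proof is correct and is exactly the standard argument: the paper itself gives no proof of this lemma (it simply cites \cite{BMT} and \cite{G}), and the intended argument there is precisely your chain of the triangle inequality for $|\cdot|$, monotonicity, $s+t\leq 2\max(s,t)$, condition $(\alpha)$, and non-negativity of $\omega$. Nothing further is needed.
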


\begin{Lemma}
  For $\omega\in\W'$ and $\varphi(t)=\omega(e^t)$, there exists
  $L>0$ such that
 \[
 \varphi^{*}(y)-y\geq L\varphi^{*}\left(\frac{y}{L}\right)-L,
 \qquad\forall y\geq0.
\]
\end{Lemma}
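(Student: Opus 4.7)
The plan is to derive the claim directly from condition $(\alpha)$, which, after the substitution $t = e^s$, reads $\varphi(s + a) \leq K\varphi(s) + K$ with $a := \log 2$. Iterating this inequality $n$ times yields
\[
\varphi(s + na) \leq K^n \varphi(s) + D_n, \qquad D_n := K + K^2 + \cdots + K^n, \qquad \forall s \geq 0.
\]

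I would then feed this iterated estimate into the Young conjugate. Given $y \geq 0$, choose $x \geq 0$ so that $xy/K^n - \varphi(x)$ is nearly equal to $\varphi^*(y/K^n)$. Since $x + na \geq 0$ is admissible in the supremum defining $\varphi^*(y)$, the iterated bound gives
\[
\varphi^*(y) \geq (x + na)y - \varphi(x + na) \geq K^n\bigl(xy/K^n - \varphi(x)\bigr) + n a\, y - D_n,
\]
and passing to the sup in $x$ produces $\varphi^*(y) \geq K^n \varphi^*(y/K^n) + n a\, y - D_n$.

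The next step is to fix $n$ large enough that $n a \geq 1$; the smallest admissible value is $n = 2$, since $2\log 2 > 1$. With this choice the coefficient $(na - 1)$ of $y$ is non-negative, so
\[
\varphi^*(y) - y \geq K^n \varphi^*(y/K^n) - D_n, \qquad \forall y \geq 0.
\]
To conclude I would use the facts (recalled just before the lemma) that $\varphi^*$ is convex with $\varphi^*(0) = 0$, which make $y \mapsto \varphi^*(y)/y$ nondecreasing on $(0,\infty)$. Hence $L\varphi^*(y/L) \leq K^n \varphi^*(y/K^n)$ for every $L \geq K^n$, and choosing $L := D_n$ (which satisfies $L \geq K^n$ since $K \geq 1$) delivers the desired inequality $\varphi^*(y) - y \geq L \varphi^*(y/L) - L$.

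The only delicate point is the calibration of $n$: a single iteration of $(\alpha)$ produces a term $(\log 2) y$ on the right, and $\log 2 < 1$ is not enough to absorb the subtracted $y$ on the left; going to $n = 2$ iterations gives $2\log 2 > 1$, which is precisely what makes the argument close.
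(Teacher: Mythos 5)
Your argument is correct: iterating condition $(\alpha)$ in the form $\varphi(s+\log 2)\leq K\varphi(s)+K$, shifting the variable in the supremum defining $\varphi^{*}$, using $2\log 2>1$ to absorb the $-y$, and then enlarging $K^{2}$ to $L=K+K^{2}$ via the monotonicity of $\varphi^{*}(s)/s$ (which follows from convexity and $\varphi^{*}(0)=0$, independently of the lemma) all check out. The paper itself gives no in-text proof and defers to \cite{BMT} and \cite{G}, and your derivation is essentially the standard one found there, so there is nothing to flag.
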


\begin{Lemma}
  For $\omega\in\W'$ and $\varphi(t)=\omega(e^t)$ we have that
  ${\displaystyle \frac{\varphi(x)}{x}}$ and ${\displaystyle
    \frac{\varphi^{*}(s)}{s}}$
are increasing.
\end{Lemma}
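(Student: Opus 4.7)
The plan is to reduce both claims to the elementary fact that, for any convex function $f:[0,\infty)\to\R$ with $f(0)=0$, the quotient $f(x)/x$ is nondecreasing on $(0,\infty)$. Once this generic lemma is in hand, the statement follows by applying it to $f=\varphi$ and to $f=\varphi^{*}$ separately.

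First I would collect the ingredients already recorded in the excerpt. By assumption $(\delta)$, $\varphi(t)=\omega(e^{t})$ is convex on $[0,\infty)$, and the normalization mentioned in the paper (``there is no loss of generality to assume that $\omega$ vanishes on $[0,1]$'') gives $\varphi(0)=\omega(1)=0$. Similarly, $\varphi^{*}$ is convex as the pointwise supremum of the affine functions $y\mapsto xy-\varphi(x)$, and from $\varphi\geq 0$ together with $\varphi(0)=0$ one reads off
\[
\varphi^{*}(0)=\sup_{x\geq 0}(-\varphi(x))=0,
\]
which is also recalled explicitly earlier in the paper. Thus both $\varphi$ and $\varphi^{*}$ are nonnegative convex functions on $[0,\infty)$ vanishing at the origin, so each is an instance of the generic $f$ above.

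Next I would prove the generic lemma. Given $0<x<y$, write
\[
x=\frac{x}{y}\,y+\Bigl(1-\frac{x}{y}\Bigr)\cdot 0,
\]
and apply convexity of $f$ together with $f(0)=0$ to obtain
\[
f(x)\leq \frac{x}{y}\,f(y)+\Bigl(1-\frac{x}{y}\Bigr)f(0)=\frac{x}{y}\,f(y),
\]
which, after dividing by $x>0$, gives $f(x)/x\leq f(y)/y$. Applying this with $f=\varphi$ yields monotonicity of $\varphi(x)/x$, and with $f=\varphi^{*}$ yields monotonicity of $\varphi^{*}(s)/s$, completing the proof.

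There is no genuine obstacle here: the only mild point is to make sure that the convention $\omega\equiv 0$ on $[0,1]$ may be invoked (so that $\varphi(0)=0$ and hence $\varphi^{*}(0)=0$), but this is exactly the normalization introduced just before the Young conjugate is defined in Section~2. Everything else is a one-line application of convexity.
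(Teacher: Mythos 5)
Your proof is correct. The paper itself gives no argument for this lemma (it only points to \cite{BMT} and \cite{G}), and your reduction to the elementary fact that $f(x)/x$ is nondecreasing for a convex $f$ with $f(0)=0$, applied to $\varphi$ (using the normalization $\omega\equiv 0$ on $[0,1]$, so $\varphi(0)=\omega(1)=0$) and to $\varphi^{*}$ (convex with $\varphi^{*}(0)=0$), is exactly the standard argument that fills this in.
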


\begin{Lemma}
Let $\omega\in\W'$. Then there exists a weight function $\sigma\in\W$
with $\omega(t)=o(\sigma(t))$.
\end{Lemma}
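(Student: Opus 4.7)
The plan is to construct $\sigma$ explicitly by setting
\[
\sigma(t):=\frac{\omega(t)}{\sqrt{F(t)}},\qquad F(t):=\int_t^\infty\frac{\omega(s)}{s^2}\,ds,
\]
well-defined for $t\geq1$ (and extended by zero on $[0,1)$, with a possible adjustment on a compact set). By $(\beta)$ for $\omega$, the tail integral $F$ is positive, decreasing, and tends to $0$, so $\sigma/\omega=1/\sqrt{F}\to\infty$, which immediately gives the dominance $\omega=o(\sigma)$. Condition $(\gamma)$ for $\sigma$ follows directly from $(\gamma)'$ for $\omega$:
\[
\frac{\log(1+t)}{\sigma(t)}=\frac{\log(1+t)\sqrt{F(t)}}{\omega(t)}\leq\frac{\sqrt{F(t)}}{b}\longrightarrow0.
\]

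The integrability condition $(\beta)$ for $\sigma$ is a one-line computation using $-F'(t)=\omega(t)/t^2$:
\[
\int_1^\infty\frac{\sigma(t)}{t^2}\,dt=\int_1^\infty\frac{-F'(t)}{\sqrt{F(t)}}\,dt=2\sqrt{F(1)}<\infty.
\]
Condition $(\alpha)$ for $\sigma$ follows from $(\alpha)$ for $\omega$ together with the estimate $F(2t)\geq F(t)/2$, obtained by the substitution $s=2r$ in the defining integral for $F(2t)$ combined with the monotonicity $\omega(2r)\geq\omega(r)$: this yields $\sigma(2t)\leq K\sqrt2\,(1+\omega(t))/\sqrt{F(t)}\leq C(1+\sigma(t))$ for $t$ large enough.

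I expect the main obstacle to be $(\delta)$, i.e.\ the convexity of $\sigma(e^t)=\varphi(t)/\sqrt{\tilde F(t)}$ in $t$, where $\varphi(t):=\omega(e^t)$ (convex by $(\delta)$ for $\omega$) and $\tilde F(t):=F(e^t)=\int_t^\infty\varphi(u)e^{-u}\,du$. Expanding the second derivative yields
\[
\frac{(\sigma(e^t))''}{\sigma(e^t)}=\frac{\varphi''(t)}{\varphi(t)}+\frac{e^{-t}}{\tilde F(t)}\cdot\frac{3\varphi'(t)-\varphi(t)}{2}+\frac34\left(\frac{\varphi(t)e^{-t}}{\tilde F(t)}\right)^2,
\]
whose first and third terms are manifestly non-negative. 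The middle term is non-negative whenever $3\varphi'\geq\varphi$, but may be negative precisely in the slow-growth regime $\omega\in\W'\setminus\W$. Using the lower bound $\tilde F(t)\geq(\varphi(t)+\varphi'(t))e^{-t}$ (from the tangent inequality for convex $\varphi$) together with a matching upper bound derived from $(\alpha)$, one shows that $x(t):=\varphi(t)e^{-t}/\tilde F(t)$ satisfies $x(t)+2\varphi'(t)/\varphi(t)\geq 1$ in the limit, which forces the quadratic term to dominate the modulus of the middle one, giving convexity for $t$ large. Should this asymptotic comparison fail on some bounded initial interval, the remedy is to replace $\sigma(e^t)$ by a log-convex regularization (for example its double Young conjugate after restricting to an $(\alpha)$-compatible class), and to verify by a separate short argument that this regularization preserves both dominance and the remaining three conditions; full details are carried out in \cite{G}.
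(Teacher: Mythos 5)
Your verifications of $(\alpha)$, $(\beta)$, $(\gamma)$ and of the dominance $\omega=o(\sigma)$ for $\sigma=\omega/\sqrt{F}$ are correct (the paper itself only cites \cite{BMT} and \cite{G} for this lemma, so your explicit formula is a genuinely different route), but there is a real gap at $(\delta)$, exactly where you expected trouble. The asymptotic inequality you rely on, $x(t)+2\varphi'(t)/\varphi(t)\geq 1$ with $x=\varphi e^{-t}/\tilde F$, is false in general: the tangent-line bound $\tilde F(t)\geq(\varphi(t)+\varphi'(t))e^{-t}$ only gives the \emph{upper} bound $x\leq\varphi/(\varphi+\varphi')$, which works against you, and $(\alpha)$ does not supply a matching lower bound on $x$, because $\tilde F(t)$ is governed by the future growth of $\varphi$ while $\varphi'(t)$ is not. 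Concretely, take $\varphi$ convex piecewise linear with tiny slope $\epsilon$ up to some $T$ (so $\varphi(T)\approx 1$, $\varphi'(T^-)=\epsilon$) and slope $1$ afterwards; this is an admissible $\omega\in\W'$ ($\omega(t)\approx 1+\epsilon\log t$ switching to $\mathrm{const}+\log t$). Then $\tilde F(T)\approx 2e^{-T}$, so $x(T^-)\approx\tfrac12$, and your own second-derivative formula gives
\[
\frac{(\sigma(e^t))''}{\sigma(e^t)}\Big|_{t=T^-}\approx 0+\frac14\,(3\epsilon-1)+\frac34\cdot\frac14=-\frac{1}{16}+O(\epsilon)<0.
\]
Since $T$ is arbitrary, and the same slow-then-faster pattern can recur along a sequence $T_n\to\infty$, the failure is not confined to ``some bounded initial interval'' and cannot be absorbed into the compact-set adjustment.

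The proposed remedy --- passing to a log-convex regularization or a double Young conjugate --- is not a repair but a restatement of the problem: the convex regularization lies \emph{below} $\psi=\varphi/\sqrt{\tilde F}$, so one must then prove that it still dominates $\varphi$ (i.e.\ that $\omega=o(\sigma)$ survives, which is the entire content of the lemma), is still increasing, and still satisfies $(\alpha)$ and $(\beta)$; none of this is automatic, and in the example above the convex minorant near $T$ genuinely drops. The argument that does work (the one behind the construction of Lemma~\ref{1.7} and the details in \cite{G}) avoids a closed formula: choose $t_n\uparrow\infty$ along which $F(t_n)$ decays fast enough and define $\varphi_\sigma$ as the piecewise linear convex interpolation of the points $(t_n,c_n\varphi(t_n))$ with $c_n\uparrow\infty$ chosen slowly (your formula suggests $c_n\sim F(t_n)^{-1/2}$), so that convexity is built in and $(\alpha)$, $(\beta)$, $(\gamma)$ and $\omega=o(\sigma)$ are checked block by block. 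As written, your proof of $(\delta)$ does not go through.
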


On the contrary, for the following results of
\cite{BMT}, the condition
$\omega\in\W$ is needed:

\begin{Lemma}
  \label{1.7}
  Let $\omega\in\W$ and $g:\,[0,\infty)\rightarrow[0,\infty)$
satisfying $g(t)=o(\omega(t))$ as $t$ tends to $\infty$. Then there
exists $\sigma\in\W$ with the following properties:
\begin{itemize}
  \item[(i)]
$g(t)=o(\sigma(t))$ ~~~as $t\rightarrow\infty$,
\item[(ii)]
$\sigma(t)=o(\omega(t))$ ~~~as $t\rightarrow\infty$,
\item[(iii)]
  $\ds\forall A>1:\quad
  \limsup_{t\to\infty}\frac{\sigma(At)}{\sigma(t)}\leq
  \limsup_{t\to\infty}
  \frac{\omega(At)}{\omega(t)}$.
\end{itemize}
If, in addition, there is $R\geq1$ such that $\left.\omega\right|_{[R,\infty)}$
is concave, then it is possible to make $\left.\sigma\right|_{[R,\infty)}$
  concave, too.
\end{Lemma}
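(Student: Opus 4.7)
The plan is to pass to the logarithmic variable $s = \log t$, writing $\varphi(s) := \omega(e^s)$ (convex and increasing by $(\delta)$), so that the hypothesis $g = o(\omega)$ becomes $g(e^s) = o(\varphi(s))$. The goal is to construct a convex, increasing $\psi : [0,\infty) \to [0,\infty)$ satisfying $g(e^s) = o(\psi(s)) = o(\varphi(s))$ together with the quantitative controls needed for $(\alpha)$, $(\gamma)$ and (iii), and then set $\sigma(t) := \psi(\log t)$ for large $t$, extended by $0$ near the origin.

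I propose a sup-of-tangents construction for $\psi$. First, inductively select a rapidly separating sequence $s_k \nearrow \infty$ so that for every $s \geq s_k$, simultaneously $g(e^s) \leq \varphi(s)/k^2$ (from $g = o(\omega)$), $s/\varphi(s) \leq 1/k^2$ (from $(\gamma)$ for $\omega$), and $s_{k+1} - s_k \geq \log k$. For each $k$, choose a subderivative $m_k$ of $\varphi$ at $s_k$ and set
\[
\ell_k(s) := \frac{\varphi(s_k) + m_k(s - s_k)}{k}.
\]
By convexity of $\varphi$ one has $\ell_k(s) \leq \varphi(s)/k$ for all $s$, with equality at $s_k$. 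Define $\psi(s) := \max(0, \sup_{k \geq 1} \ell_k(s))$, which is convex as a supremum of affine functions, and increasing.

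With $\psi$ in hand the axioms and the three properties are verified as follows. Property $(\delta)$ is built in. The dominating index $k = k(s)$ in the supremum tends to $\infty$ with $s$ (the rapid separation of $(s_k)$ combined with convexity of $\varphi$ forces the slopes $m_k/k$ to be unbounded and the large-$k$ tangents eventually to overtake the small-$k$ ones), so $\psi(s) \leq \varphi(s)/k(s)$ yields (ii), and from $\sigma \leq \omega$ one inherits $(\beta)$. The bound $\psi(s_k) \geq \varphi(s_k)/k$ combined with $g(e^s) \leq \varphi(s)/k^2$ on each interval $[s_k, s_{k+1}]$ gives (i); the analogous bound with $s$ in place of $g(e^s)$ gives $(\gamma)$ for $\sigma$. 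For $(\alpha)$, write $\psi(s + \log 2) = \sup_k(\ell_k(s) + m_k \log 2 / k)$: the first term is bounded by $\psi(s)$ and the slope contribution is controlled by applying $(\alpha)$ to $\varphi$ (which bounds $m_k$ in terms of $\varphi(s)$ for $s$ in the range of interest). Property (iii) is obtained by the same slope-comparison applied to shifts $s \mapsto s + \log A$ along the subsequence $(s_k)$.

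The chief obstacle is to satisfy all axioms from one convex function simultaneously. A multiplicative ansatz $\sigma = \omega \cdot h$ with a slowly vanishing $h$ generally destroys $(\delta)$, since a convex function times a decreasing one need not be convex; and a naive piecewise-linear interpolation through the milestones $(s_k, \varphi(s_k)/k)$ would violate (ii) in the interior of the intervals, because $\varphi$ convex can concentrate its growth near $s_{k+1}$ while the chord overshoots. The sup-of-tangents construction succeeds precisely because each $\ell_k$ lies below $\varphi/k$ by convexity, hence below $\varphi$. For the final concavity-preservation clause, one performs a dual construction directly in the $t$-variable, replacing the sup of tangents to the convex $\varphi/k$ by an inf of supporting half-planes from above to the concave $\omega/k$ restricted to $[R,\infty)$, and then verifies that the resulting $\sigma$ is simultaneously convex in $\log t$ and concave in $t$; these two conditions are compatible for moderate-slope weights, as witnessed by the standard examples $\omega(t) = t^\alpha$ with $0 < \alpha < 1$.
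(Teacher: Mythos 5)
The paper does not actually prove this lemma; it is quoted from \cite{BMT} (Lemma 1.7 there), so your attempt can only be judged on its own merits. Unfortunately the central construction has a genuine flaw: property (i) fails. Your function $\psi=\sup_k\ell_k$ is, on most of each block $[s_k,s_{k+1}]$, just the single affine function $\ell_k$ (the tangents $\ell_j$ with $j\ge k+1$ are negative except within an $O(1)$ neighbourhood of $s_j$, and the earlier ones are smaller affine functions), hence $\psi(s)=O(s)$ there. But the only upper bound you have on $g$ is $g(e^s)\le\varphi(s)/k^2$, and $\varphi(s)$ can be exponentially larger than the tangent $\varphi(s_k)+m_k(s-s_k)$ deep inside the block. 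Concretely, take $\omega(t)=\sqrt t$, so $\varphi(s)=e^{s/2}$, and $g(t)=\omega(t)/\log\log\log t$. Your selection rule forces $s_k\ge e^{e^{k^2}}$, so the blocks are enormous; at the midpoint $s=(s_k+s_{k+1})/2$ one has $\psi(s)\le e^{s_k/2}(1+s)$ while $g(e^s)$ may be as large as $e^{s/2}/k^2\approx e^{s_{k+1}/4}/k^2$, which dwarfs $\psi(s)$. So $g\ne o(\sigma)$. Note the irony: you correctly diagnosed that chords through the milestones overshoot and kill (ii); tangents at the milestones undershoot by exactly the dual mechanism and kill (i). One standard repair is to take the supremum of tangents at \emph{every} point $u$, each divided by the step function $k(u):=k$ for $u\in[s_k,s_{k+1})$; then $\psi(s)\ge\varphi(s)/k(s)$ pointwise (restoring (i)), while (ii) follows because each fixed tangent is affine, hence $o(\varphi)$ by $(\gamma)$, so the low-index tangents become negligible if the $s_k$ are chosen to grow fast enough. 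Equivalently, one keeps $\sigma$ comparable to $\omega/k$ throughout each block, which is the property your construction loses.

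Two secondary points. The verification of $(\alpha)$ and of (iii) is only gestured at; with the repaired construction one must actually bound $\psi(s+\log A)-\psi(s)$ by comparing tangents to $\varphi$ at $u$ and at $u+\log A$, using $(\alpha)$ for $\omega$, and this is where the constant $\limsup\omega(At)/\omega(t)$ enters, so it deserves a real argument. And the final clause about preserving concavity on $[R,\infty)$ is not proved: the assertion that convexity in $\log t$ and concavity in $t$ ``are compatible for moderate-slope weights'' is an observation about examples, not an argument that your inf-of-upper-supports construction yields a function satisfying $(\delta)$ for an arbitrary weight that is concave on $[R,\infty)$.
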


\begin{Rem}
  \begin{em}
  \label{remG1}
  $(1)$ If $\omega$ is a quasianalytic weight, i.e.
  $\omega\in\W$ is a weight function except that it doesn't
satisfy $(\beta)$, then the function $\sigma$ can be constructed
as above, except that it may not satisfy $(\beta)$.

$(2)$ We can construct $\sigma$ so that it coincides with $\omega$
on a given arbitrarily large bounded interval.

$(3)$ If $\omega,\rho\in\W$ are concave on
  $\left(R,\infty\right)$,
with $\rho=o(\omega)$, then for each $D>0$ there is a weight function
$\sigma$, as in Lemma \ref{1.7}, with the additional property that
$\sigma\equiv\rho$ on $\left[0,D\right]$.
\end{em}
\end{Rem}

\begin{Prop}
Let $\omega\in\W$ and, for $j\in\mathbb{N}$,
let $g_{j}:\, [0,+\infty)\to[0,+\infty)$ satisfying
    $g_{j}(t)=o(\omega(t))$ as $t$ tends
to $\infty$. Then there exists $\sigma\in\W$ with the
following properties:

$(i)$ ${\displaystyle {\displaystyle \underset{t\rightarrow\infty}{\lim}
    \frac{g_{j}(t)}{\sigma(t)}=0\,\,\,\,\,\,\forall j\in\mathbb{N},}}$

$(ii)$ ${\displaystyle \underset{t\rightarrow\infty}{\lim}{\displaystyle
    \frac{\sigma(t)}{\omega(t)}=0,}}$

$(iii)$ $\forall A>1:\quad
{\displaystyle \limsup_{t\to\infty}
  \frac{\sigma(At)}{\sigma(t)}\leq
  \limsup_{t\to\infty}
  \frac{\omega(At)}{\omega(t)}.}$
\end{Prop}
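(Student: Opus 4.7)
My plan is to reduce the countable case to the single-function Lemma~\ref{1.7} by a diagonal majorization. Concretely, I will produce a single auxiliary function $h:[0,\infty)\to[0,\infty)$ such that $g_j=o(h)$ for every $j\in\N$ while still $h=o(\omega)$; then Lemma~\ref{1.7} applied with $g:=h$ (and the given $\omega$) yields a weight $\sigma\in\W$ that inherits (ii) and (iii) directly from the lemma, and (i) follows from the transitive composition of the two ``little~$o$'' relations.

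For the construction of $h$, since each $g_j$ is $o(\omega)$, I choose inductively a strictly increasing sequence $T_1<T_2<\cdots$ with $T_n\to\infty$ such that
\[
g_k(t)\leq\frac{\omega(t)}{n^{2}},\qquad\forall t\geq T_n,\ \forall k\leq n.
\]
Define the step function $\varepsilon(t):=1$ for $t<T_1$ and $\varepsilon(t):=1/n$ for $t\in[T_n,T_{n+1})$, and set $h(t):=\omega(t)\,\varepsilon(t)$. Then $h(t)/\omega(t)=\varepsilon(t)\to 0$, so $h=o(\omega)$. For any fixed $k$ and any $t\in[T_n,T_{n+1})$ with $n\geq k$, one has
\[
\frac{g_k(t)}{h(t)}\leq\frac{\omega(t)/n^{2}}{\omega(t)/n}=\frac{1}{n}\xrightarrow[t\to\infty]{}0,
\]
and therefore $g_k=o(h)$ for every $k\in\N$. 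Note that Lemma~\ref{1.7} imposes no regularity on the majorized function, so the fact that $h$ is merely a step-function multiple of $\omega$ is harmless.

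Now I apply Lemma~\ref{1.7} to the pair $(\omega,h)$ to obtain $\sigma\in\W$ with $h=o(\sigma)$, $\sigma=o(\omega)$, and the $\limsup$ estimate in (iii). Properties (ii) and (iii) of the proposition are precisely the corresponding conclusions of Lemma~\ref{1.7}, while property~(i) follows by factoring
\[
\frac{g_j(t)}{\sigma(t)}=\frac{g_j(t)}{h(t)}\cdot\frac{h(t)}{\sigma(t)}\xrightarrow[t\to\infty]{}0,\qquad\forall j\in\N.
\]
The only step beyond invoking Lemma~\ref{1.7} is the diagonal construction of $h$, which is the mildly delicate point but is standard; I do not anticipate any further obstacle.
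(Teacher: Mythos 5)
Your argument is correct. The paper itself gives no proof of this proposition (it is quoted from \cite{BMT}), but your diagonal majorization --- building a single $h$ with $g_j=o(h)$ for every $j$ and $h=o(\omega)$, then invoking Lemma~\ref{1.7} once --- is exactly the standard reduction used there, and all the steps (inductive choice of the $T_n$, positivity of $\omega$ and hence of $h$ and $\sigma$ for large $t$ so the quotients make sense, transitivity of the little-$o$ relations) check out.
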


\begin{Lemma}
  Let $\omega\in\W$. Then there exists a nonzero function
  $g\in{\Sch}(\mathbb{R})$
with support in $(-\infty,0]$ for which the Fourier transform $\widehat{g}$
satisfies
\[
|\widehat{g}(x)|\leq  e^{ -2K\omega(x)}
\,\,\,\,\,\,\forall x\in\mathbb{R},
\]
where $K$ denotes the constant in $(\alpha)$.
\end{Lemma}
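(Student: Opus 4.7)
The plan is to first construct a nonzero compactly supported $g_{0}\in\D(\R)$ with $|\widehat{g_{0}}(x)|\leq e^{-2K\omega(x)}$ on $\R$, and then to translate. If $\supp g_{0}\subset[-R,R]$, set $g(x):=g_{0}(x+R)$, which has support in $[-2R,0]\subset(-\infty,0]$ and satisfies $|\widehat{g}(x)|=|\widehat{g_{0}}(x)|$; since $\D(\R)\subset\Sch(\R)$, this $g$ meets all requirements.

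To build $g_{0}$, I would use the classical Paley--Wiener-type infinite convolution. Fix a nonzero even $\eta\in\D(\R)$ with $\supp\eta\subset[-1,1]$, so that $\widehat{\eta}$ is entire with $|\widehat{\eta}(\xi)|\leq 1$ and $|\widehat{\eta}(\xi)|\leq C/(1+|\xi|)$ on $\R$. Given a sequence $(a_{k})_{k\geq 1}$ of positive reals with $\sum a_{k}<\infty$, put $\eta_{k}(x):=a_{k}^{-1}\eta(x/a_{k})$, with $\widehat{\eta_{k}}(\xi)=\widehat{\eta}(a_{k}\xi)$ and $\supp\eta_{k}\subset[-a_{k},a_{k}]$. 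The infinite convolution
\[
g_{0}:=\eta\ast\eta_{1}\ast\eta_{2}\ast\cdots
\]
converges in $\D(\R)$, has support in $[-R,R]$ with $R=1+\sum_{k}a_{k}$, is nonzero since $\widehat{g_{0}}(0)=\widehat{\eta}(0)\neq 0$, and satisfies $\widehat{g_{0}}(\xi)=\widehat{\eta}(\xi)\prod_{k}\widehat{\eta}(a_{k}\xi)$. Everything then reduces to choosing $(a_{k})$ so that this product is bounded by $e^{-2K\omega(|\xi|)}$.

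The key step is condition $(\beta)$: rewritten dyadically as $\sum_{j\geq 0}2^{-j}\omega(2^{j})<\infty$, it allows us to design $(a_{k})$ so that the counting function $M(t):=\#\{k:a_{k}t\geq 1\}$ grows at least like a constant multiple of $\omega(t)$ while still keeping $\sum a_{k}=\int_{0}^{\infty}M(t)t^{-2}dt$ finite. Since $|\widehat{\eta}(a_{k}\xi)|\leq C(a_{k}|\xi|)^{-1}$ on $\{a_{k}|\xi|\geq 1\}$, taking logarithms gives $\sum_{k}\log|\widehat{\eta}(a_{k}\xi)|\leq -c'\omega(|\xi|)+O(1)$; condition $(\alpha)$ then lets us absorb the additive constant and adjust the multiplicative constant to match $2K$, if necessary by replacing $g_{0}$ with a finite convolution power of itself (each additional factor only multiplies the support length by a bounded amount and multiplies $|\widehat{g_{0}}|$ by itself). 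The main obstacle is precisely this bookkeeping---translating the integral non-quasianalyticity condition into the sharp pointwise bound with the exact constant $2K$---once this is achieved, the remaining steps (shift of support, membership in $\Sch(\R)$) are formal.
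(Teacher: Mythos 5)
Your construction is essentially correct and does prove the lemma as stated, but it is a genuinely different route from the one the paper relies on. The paper gives no proof here: it quotes the result from \cite{BMT} (Lemma 2.2 there), where $\widehat{g}$ is produced directly as a bounded holomorphic function on a half-plane via the Poisson and conjugate-Poisson integrals of $-2K\omega$ (an outer-function construction, using $(\beta)$ in the form $\int\omega(t)(1+t^2)^{-1}dt<\infty$), and the half-line support then comes from the Paley--Wiener theorem for the Hardy space; the resulting $g$ is a Schwartz function genuinely supported on all of $(-\infty,0]$. You instead use the classical Denjoy--Carleman infinite convolution of rescaled bumps, which yields a \emph{compactly} supported $g_0\in\D(\R)$; after translation its support lies in $(-\infty,0]$ and $\D(\R)\subset\Sch(\R)$, so the statement is formally satisfied. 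Your route is more elementary and is in fact the same mechanism that underlies the two subsequent propositions in the paper (existence of $H\in\D$ with $\int|\hat H|e^{m\omega}<\infty$); the price is that you lose the feature for which \cite{BMT} states the lemma with half-line support, namely that a product of two shifted reflected copies of $g$ gives support exactly in $[-\varepsilon,\varepsilon]$ for arbitrary $\varepsilon$ without rescaling. The counting-function bookkeeping you defer is genuinely routine: taking $a_k=1/t_k$ with $t_k=\inf\{t:\omega(t)\geq k/c\}$ gives $M(t)=\lfloor c\,\omega(t)\rfloor$ and $\sum_k a_k=\int_0^\infty M(t)t^{-2}dt<\infty$ by $(\beta)$, and since $(\beta)$ is stable under multiplying $\omega$ by any constant, any target exponent, in particular $2K+1$, is reachable.

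Two small repairs are needed in your final step. First, normalize $\eta\geq0$ with $\int\eta=1$; otherwise neither $|\widehat{\eta}|\leq1$ nor $\widehat{g_0}(0)=\widehat{\eta}(0)\prod_k\widehat{\eta}(0)\neq0$ is guaranteed. Second, replacing $g_0$ by a convolution power does \emph{not} absorb the additive constant: if $|\widehat{g_0}|\leq e^{-c'\omega+C'}$ then $|\widehat{g_0^{*n}}|\leq e^{-nc'\omega+nC'}$, and the bad term grows with $n$, while on the set where $\omega$ is small (it vanishes near $0$) the bound $|\widehat{g_0^{*n}}|\leq1$ alone does not give $e^{-2K\omega}$. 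The clean fix is to build the stronger rate $c'\geq 2K+1$ directly into the choice of the $a_k$ (as above) and then pass to $g:=e^{-C'}g_0(\cdot+R)$; the scalar factor kills the additive constant uniformly, since $e^{-C'}\cdot e^{C'}e^{-(2K+1)\omega}\leq e^{-2K\omega}$ for all $x$ because $\omega\geq0$.
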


\begin{Prop}
  Let $\omega\in\W$. Then for every $\varepsilon>0$ there exists
  $h\in \mathcal C^{\infty}(\mathbb{R})$,
$h\not=0$, with
\begin{align*}
  & {\displaystyle \mbox{\ensuremath{\supp}}(h)}
  \subset[-\varepsilon,\varepsilon]\\
 & {\displaystyle \int_{-\infty}^{+\infty}|\hat{h}(x)|e^{\omega(x)}dx}<\infty.
\end{align*}
\end{Prop}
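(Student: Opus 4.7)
The plan is to build $h$ as a product of two shifted-and-reflected copies of the function $g$ furnished by the previous lemma. Taking a product collapses the two supports $(-\infty,0]$ and $[0,\infty)$ (after appropriate shifts) into a compact interval, while on the Fourier side the product becomes a convolution whose two factors both enjoy the $e^{-2K\omega}$ bound.

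Let $g\in\Sch(\R)$ be as in the previous lemma: nonzero, $\supp g\subset(-\infty,0]$, and $|\widehat g(\xi)|\leq e^{-2K\omega(\xi)}$, with $K$ the constant of $(\alpha)$. Modifying $g$ by a preliminary translation/dilation if necessary (an operation that preserves $\Sch(\R)$ and, via iterated use of $(\alpha)$, keeps a Fourier-side bound of the form $|\widehat g(\xi)|\leq C\,e^{-c\omega(\xi)}$ with $c$ large enough for what follows), one may assume there exists $\delta\in(0,\varepsilon]$ with $g(-\delta)\neq0$. Set
\[
 h(x)\ :=\ g(x-\delta)\,g(-x-\delta).
\]
The first factor is supported in $(-\infty,\delta]$ and the second in $[-\delta,\infty)$, so $\supp h\subset[-\delta,\delta]\subset[-\varepsilon,\varepsilon]$; moreover $h(0)=g(-\delta)^{2}\neq 0$, and as a product of Schwartz functions, $h\in\mathcal C^\infty(\R)$.

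Writing $g_{1}(x)=g(x-\delta)$, $g_{2}(x)=g(-x-\delta)$, a direct calculation gives $|\widehat{g_{j}}(\xi)|=|\widehat g(\pm\xi)|\leq e^{-2K\omega(\xi)}$, since $\omega(z)=\omega(|z|)$. By the convolution theorem $\widehat h=(2\pi)^{-1}\,\widehat{g_{1}}\ast\widehat{g_{2}}$, hence
\[
 |\widehat h(\xi)|\ \leq\ \frac{1}{2\pi}\int_{\R}e^{-2K\omega(\xi-\eta)}\,e^{-2K\omega(\eta)}\,d\eta.
\]
Next, Lemma~\ref{lemma12BMT} gives $e^{\omega(\xi)}\leq e^{K}\,e^{K\omega(\xi-\eta)}\,e^{K\omega(\eta)}$; inserting this and applying Fubini,
\[
 \int_{\R}|\widehat h(\xi)|\,e^{\omega(\xi)}\,d\xi\ \leq\ \frac{e^{K}}{2\pi}\,\Bigl(\int_{\R}e^{-K\omega(t)}\,dt\Bigr)^{\!2}<+\infty,
\]
the last integral being finite because $\omega\in\W$ satisfies $(\gamma)$, so $e^{-K\omega(t)}$ decays faster than any negative power of $1+|t|$.

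The delicate point is the nonvanishing of $h$: one must guarantee that the two shifted-and-reflected factors have overlapping non-vanishing sets inside $[-\varepsilon,\varepsilon]$ without compromising the exponential bound on $\widehat g$. Once this preliminary adjustment is carried out, the rest is a routine combination of the convolution theorem, the subadditivity of $\omega$ from Lemma~\ref{lemma12BMT}, and the integrability of $e^{-K\omega}$ granted by $(\gamma)$.
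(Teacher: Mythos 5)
The paper itself gives no proof of this proposition (it is quoted from Braun--Meise--Taylor), so your proposal has to stand on its own. Its architecture is the standard one and is essentially correct: take the $g$ of the preceding lemma, form $h(x)=g(x-\delta)\,g(-x-\delta)$ to trap the support in $[-\delta,\delta]$ with $h(0)=g(-\delta)^2\neq0$, pass to a convolution on the Fourier side, and combine the bound $e^{-2K\omega}$ with the subadditivity $e^{\omega(\xi)}\leq e^{K}e^{K\omega(\xi-\eta)}e^{K\omega(\eta)}$ of Lemma~\ref{lemma12BMT} and with $(\gamma)$ to get $\int|\hat h|e^{\omega}\leq \frac{e^K}{2\pi}\bigl(\int e^{-K\omega}\bigr)^2<\infty$. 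All of that is fine.

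The gap is exactly at the point you yourself flag as delicate: producing $\delta\in(0,\varepsilon]$ with $g(-\delta)\neq0$. Your proposed remedy is a dilation of $g$, justified by ``iterated use of $(\alpha)$'' keeping a bound $Ce^{-c\omega}$ with ``$c$ large enough.'' This is not true in general. Condition $(\alpha)$ gives $\omega(t/2)\geq K^{-1}\omega(t)-1$, so after shrinking by a factor $2^{n}$ the Fourier bound degrades to $Ce^{-2K^{1-n}\omega(\xi)}$. Your final estimate needs the exponent constant to exceed $K$ (you must absorb the factor $e^{K\omega(\xi-\eta)+K\omega(\eta)}$ coming from subadditivity and still have $\int e^{-(c-K)\omega}<\infty$); since $2K^{1-n}\leq 2$ for $n\geq1$ and $K\geq1$, this fails as soon as $K>2$, already for a single dyadic dilation. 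So the dilation route genuinely breaks the integrability at the end. The fix is simpler than what you attempt: no dilation is needed at all. Let $s:=\sup\supp g\leq0$ and replace $g$ by $g(\cdot+s)$; on the real axis this multiplies $\hat g$ by the unimodular factor $e^{is\xi}$, so the bound $|\hat g(\xi)|\leq e^{-2K\omega(\xi)}$ is preserved exactly, the support stays in $(-\infty,0]$, and now $0$ lies in the support, so for every $\varepsilon>0$ there is $\delta\in(0,\varepsilon)$ with $g(-\delta)\neq0$. With that substitution the rest of your argument goes through verbatim.
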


However, the following two propositions for the existence of functions
with compact support are valid also for $\omega\in\W'$:

\begin{Prop}
Let $\omega\in\W'$. Then for each $N\in\mathbb{N}$ there exists
$\delta_{N}>0$ such that for every $\varepsilon>0$ there exists
$H\in\mathcal C^{\infty}(\mathbb{R}^{N})$, $H\not=0$, with
\begin{align*}
  & {\displaystyle \mbox{\ensuremath{\supp}}(H)}
  \subset[-\varepsilon,\varepsilon]^{N}\\
  & {\displaystyle \int_{\mathbb{R}^{N}}
      |\hat{H}(T)|e^{\delta_{N}\omega(t)}dt}<\infty.
\end{align*}
\end{Prop}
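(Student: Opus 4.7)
The statement extends the preceding proposition (valid for $\omega\in\W$) to the weaker class $\W'$, so the plan is to reduce the $N$-dimensional $\W'$ case to the one-dimensional $\W$ case by tensorization, after replacing $\omega$ by a larger weight in $\W$.

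First I would invoke the lemma stated earlier in the excerpt, which guarantees a weight $\sigma\in\W$ with $\omega(t)=o(\sigma(t))$. In particular, there exists a constant $C>0$ such that
\[
\omega(s)\le \sigma(s)+C\qquad\forall s\ge 0.
\]
Since $\sigma\in\W$, the previous proposition applies in one variable: for the given $\varepsilon>0$ there exists $h\in\mathcal C^\infty(\R)$, $h\not\equiv0$, with $\supp(h)\subset[-\varepsilon,\varepsilon]$ and $\int_{\R}|\hat h(x)|e^{\sigma(x)}\,dx<\infty$.

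Next I would set
\[
H(t_1,\dots,t_N):=\prod_{j=1}^{N}h(t_j),
\]
so that $H\in\mathcal C^\infty(\R^N)$, $H\not\equiv0$, $\supp(H)\subset[-\varepsilon,\varepsilon]^N$, and $\hat H(t)=\prod_{j=1}^N\hat h(t_j)$. It remains to show $\int_{\R^N}|\hat H(t)|e^{\delta_N\omega(t)}\,dt<\infty$ for a suitable $\delta_N>0$. Applying Lemma \ref{lemma12BMT} iteratively to $\omega(|t|)=\omega(|t_1|+\cdots+|t_N|)$ gives
\[
\omega(|t|)\;\le\; c_N + K^{N-1}\sum_{j=1}^{N}\omega(|t_j|)
\]
for some constant $c_N>0$ depending only on $N$ and the constant $K$ in $(\alpha)$. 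Combining with $\omega(|t_j|)\le\sigma(|t_j|)+C$ and choosing, e.g., $\delta_N:=K^{1-N}$, we obtain
\[
\delta_N\,\omega(|t|)\;\le\; C'_N+\sum_{j=1}^{N}\sigma(|t_j|)
\]
for a new constant $C'_N$. Hence
\[
\int_{\R^N}|\hat H(t)|\,e^{\delta_N\omega(|t|)}\,dt
\;\le\; e^{C'_N}\prod_{j=1}^{N}\int_{\R}|\hat h(t_j)|\,e^{\sigma(|t_j|)}\,dt_j\;<\;\infty,
\]
which yields the statement.

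The argument has no real obstacle: the subadditivity of $\omega$ up to constants (Lemma \ref{lemma12BMT}) is exactly tailored for this tensorization, and the passage from $\W'$ to $\W$ via the lemma on the existence of a larger weight $\sigma$ is precisely what allows us to appeal to the preceding 1D proposition. The only point requiring care is the bookkeeping of constants in choosing $\delta_N$; one must pick it small enough (the value $K^{1-N}$ is the natural one) to absorb the factor $K^{N-1}$ arising from the iterated subadditivity inequality.
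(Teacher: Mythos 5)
Your argument is correct. Note, however, that the paper does not actually prove this proposition in the text: it simply refers to \cite{BMT}, Corollary~2.5 and the Remark after Corollary~2.6, so there is no in-paper argument to compare against line by line. Your reconstruction follows the same core idea as the cited BMT proof, namely tensorization of the one-dimensional compactly supported function together with the almost-subadditivity of $\omega$ (iterating $\omega(x+y)\le K(1+\omega(x)+\omega(y))$ to get $\omega(|t|)\le c_N+K^{N-1}\sum_j\omega(|t_j|)$ and choosing $\delta_N=K^{1-N}$); the constant bookkeeping and the Tonelli factorization are all in order. Where you genuinely diverge is in how the passage from $\W$ to $\W'$ is handled: BMT's remark observes that the one-dimensional construction never uses condition $(\gamma)$, whereas you instead dominate $\omega$ by a weight $\sigma\in\W$ with $\omega=o(\sigma)$ (so $\omega\le\sigma+C$) and apply the one-dimensional proposition to $\sigma$. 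This detour is valid and has the advantage of using only statements explicitly recorded in the paper, at the cost of not showing that the exponent $\delta_N$ could in fact be taken relative to $\omega$ itself without any enlargement of the weight; for the statement as given, which only asks for some $\delta_N>0$ independent of $\varepsilon$, this makes no difference.
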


\begin{proof}
  See \cite{BMT}, Corollary 2.5 and Remark after Corollary 2.6.
\end{proof}

\begin{Prop}
  \label{2.6}
  Let $\omega\in\W'$. Then for each $N\in\mathbb{N}$ and
$\varepsilon>0$ there exists $H\in\mathcal C^{\infty}(\mathbb{R}^{N})$,
$H\not=0$, with
\begin{align*}
  & {\displaystyle \mbox{\ensuremath{\supp}}(H)}
  \subset[-\varepsilon,\varepsilon]^{N}\\
  & {\displaystyle \int_{\mathbb{R}^{N}}|\hat{H}(T)|
      e^{m\omega(t)}dt}<\infty,\qquad\forall m>0.
\end{align*}
\end{Prop}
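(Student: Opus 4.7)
The plan is to construct $H$ as an infinite convolution $H = H_1 * H_2 * \cdots$ of nonnegative bump functions $H_k \in \D(\R^N)$ whose supports shrink so that $\sum_k \varepsilon_k < \varepsilon$ and whose Fourier transforms exhibit progressively faster decay. Normalising each $H_k \ge 0$ with $\int H_k = 1$ forces $|\hat H_k(t)| \le \hat H_k(0) = 1$, so the partial products $\hat H^{(n)}(t) := \prod_{k \le n}\hat H_k(t)$ are uniformly dominated by $1$; more importantly, $|\hat H(t)| \le |\hat H_k(t)|$ holds for every single $k$, which lets me control $\int |\hat H| e^{m\omega}\,dt$ by the corresponding integral of a single factor $\hat H_k$ chosen with $k$ sufficiently large.

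To build the $H_k$, I first invoke the earlier lemma that for $\omega \in \W'$ there exists $\sigma \in \W$ with $\omega = o(\sigma)$, hence $\omega \le \sigma + C$ for some $C > 0$; this lets me work with $\sigma \in \W$ throughout and recover the bound for $\omega$ from $e^{m\omega} \le e^{mC}e^{m\sigma}$. For each $k \in \N$, applying the earlier unnumbered 1D Proposition (valid for weights in $\W$) to the weight $k\sigma \in \W$ with support parameter $\varepsilon_k := \varepsilon/2^{k+1}$ gives a nonzero $h_k \in \D(\R)$ with $\supp h_k \subset [-\varepsilon_k,\varepsilon_k]$ and $\int |\hat h_k(x)| e^{k\sigma(x)}\,dx < \infty$. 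Replacing $h_k$ by $|h_k|^2/\|h_k\|_2^2$, I may assume $h_k \ge 0$ and $\int h_k = 1$; a short computation using $\widehat{|h_k|^2} = \hat h_k * \widehat{\overline{h_k}}$ together with the subadditivity of $\sigma$ in Lemma~\ref{lemma12BMT} shows that the weighted estimate survives, with the exponent weakened by the constant $K$ in $(\alpha)$. Finally, $H_k(x) := \prod_{j=1}^N h_k(x_j)$ is a nonnegative bump in $\D(\R^N)$ with $\int H_k = 1$, $\supp H_k \subset [-\varepsilon_k,\varepsilon_k]^N$, and, iterating Lemma~\ref{lemma12BMT} $N-1$ times to bound $\sigma(\sum|t_j|)$, one obtains $\int |\hat H_k(t)| e^{m\sigma(t)}\,dt < \infty$ whenever $m \le k/K^N$.

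Having assembled these blocks, set $H^{(n)} := H_1 * \cdots * H_n$ and let $n \to \infty$. The supports nest inside $[-\varepsilon/2,\varepsilon/2]^N$, and on the Fourier side the elementary bound $|1 - \hat H_k(t)| \le \int H_k(x)|1 - e^{-it\cdot x}|\,dx \le \varepsilon_k|t|$ makes $\sum_k|1 - \hat H_k(t)|$ converge for every $t$, so $\hat H(t) := \prod_{k=1}^\infty \hat H_k(t)$ is well-defined pointwise, with $\hat H(0) = 1$ giving $H \not\equiv 0$. Since $|\hat H^{(n)}| \le |\hat H_1| \in L^1(\R^N)$, dominated convergence yields $\hat H^{(n)} \to \hat H$ in $L^1$, whence $H^{(n)} \to H$ uniformly and $\supp H \subset [-\varepsilon/2,\varepsilon/2]^N \subset [-\varepsilon,\varepsilon]^N$. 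Given $m > 0$, picking $k \ge mK^N$ and using $|\hat H(t)| \le |\hat H_k(t)|$ gives $\int |\hat H(t)| e^{m\omega(t)}\,dt \le e^{mC}\int |\hat H_k(t)| e^{m\sigma(t)}\,dt < \infty$; the super-polynomial decay of $\hat H$ then forces $H \in C^\infty(\R^N)$.

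The main obstacle I anticipate lies in the first step: producing the sequence $H_k$ whose decay rate genuinely tends to infinity with $k$. A naive application of the preceding Proposition directly in $\R^N$ to the weight $k\omega$ yields a constant $\delta_N^{(k\omega)}$ that may degrade with $k$ (since the constant in $(\alpha)$ for $k\omega$ is $kK$), so the effective rate $\delta_N^{(k\omega)}\cdot k$ need not grow in dimensions $N \ge 3$. Building $H_k$ as a tensor product of 1D bumps circumvents this, because the 1D result carries no such $\delta$-loss and the $N$-dimensional passage introduces only a factor $K^N$ depending on $\sigma$ alone. Once this key estimate is secured, the remaining verifications — pointwise convergence of the infinite product, $L^1$-convergence of the $\hat H^{(n)}$, the support bound, $C^\infty$-regularity, and the weighted integrability of $\hat H$ for every $m$ — are essentially bookkeeping.
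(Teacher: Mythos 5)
Your construction is correct: the tensor-product blocks $H_k$ with $\int|\hat H_k|e^{(k/K^N)\sigma}\,dt<\infty$, the domination $|\hat H|\le|\hat H_k|$ coming from $H_j\ge0$, $\int H_j=1$, and the convergence of the infinite convolution all check out (including the point you flag yourself, that building $H_k$ from one-dimensional bumps for the weight $k\sigma\in\W$ avoids any degradation of the constant $\delta_N$). However, it is a genuinely different and much heavier route than the one the paper intends, which is the proof of Corollary~2.6 in \cite{BMT}: take $\sigma\in\W$ with $\omega=o(\sigma)$ (the same lemma you invoke), apply the \emph{preceding} Proposition in $\R^N$ to $\sigma$ once, obtaining a single $H$ with $\supp H\subset[-\varepsilon,\varepsilon]^N$ and $\int|\hat H(t)|e^{\delta_N\sigma(t)}\,dt<\infty$, and then observe that for every $m>0$ one has $m\omega=o(\sigma)$, hence $m\omega(t)\le\delta_N\sigma(t)+C_m$ and $\int|\hat H|e^{m\omega}\,dt\le e^{C_m}\int|\hat H|e^{\delta_N\sigma}\,dt<\infty$. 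In other words, the ``main obstacle'' you anticipate (needing a sequence of functions with improving decay rates) never arises: a single application of the fixed-$\delta_N$ result to a strictly larger weight already gives the conclusion for all $m$ simultaneously, because the quantifier ``for all $m$'' is absorbed into the constant $C_m$ rather than into the function. What your longer argument buys is self-containedness at the level of the one-dimensional existence result and an explicit infinite-convolution construction (essentially re-deriving the Denjoy--Carleman-type machinery that underlies the cited one-dimensional propositions); what the short argument buys is a two-line proof. Two small points to tidy if you keep your version: the bound $|1-\hat H_k(t)|\le\varepsilon_k|t|$ should carry the harmless factor $N$ from the $\ell^1$-norm $|x|=\sum_j|x_j|$ used in the paper, and the passage from $\omega=o(\sigma)$ to $\omega\le\sigma+C$ deserves the one-line justification (eventual domination plus boundedness on a compact interval).
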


\begin{proof}
See \cite{BMT}, Corollary 2.6 and the related Remark.
\end{proof}

The difference, in the next two lemmas, between taking $\omega\in\W$ or 
$\omega\in\W'$, will be crucial in the sequel for the choice of
$\omega\in\W$ when defining the space of 
$\omega$-ultradifferentiable functions of Roumieu type and $\omega\in\W'$
for defining the space of $\omega$-ultradifferentiable functions of
Beurling type.

\begin{Lemma}
\label{3.3A}
Let $\omega\in\W$ and let $f\in{\D}(\mathbb{R}^{N})$.
If there exists $B>0$ such that
\[
{\displaystyle \int_{\mathbb{R}^{N}}|\hat{f}(t)|e^{B\omega(t)}dt:=C<\infty},
\]
then 
\begin{equation}
\underset{\alpha\in\mathbb{N}_{0}^{N}}{\sup}
\underset{x\in\mathbb{R}^{N}}{\sup}|f^{(\alpha)}(x)|
e^{-B\varphi^{*}\left(\frac{|\alpha|}{B}\right)}\leq\frac{C}{(2\pi)^{N}}.
\label{(a)}
\end{equation}
If (\ref{(a)}) holds for $f\in{\D}(\R^N)$ and $B>0$ then there
is $D>0$, depending only on $\omega,\,N\,\mbox{and}\,\,B$, and there
is $L>0$ depending only on $\omega\,\mbox{and}\,\,N$, such that
for $K=\supp f$ and $m_{N}(K)$ its Lebesgue
measure, we have that 
\[
|\hat{f}(z)|\leq m_{N}(K)\frac{CD}{(2\pi)^{N}}
e^{\left(H_{K}(\Im z)-\frac{B}{L}\omega(z)\right)}\,\,\,\,\,\,\,\,\,\,
\forall z\in\mathbb{C}^{N}.
\]
\end{Lemma}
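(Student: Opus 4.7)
The proof splits into two independent implications.

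For the first, Fourier inversion gives $f^{(\alpha)}(x) = (2\pi)^{-N}\int (it)^\alpha \hat f(t)\,e^{ix\cdot t}\,dt$, hence $|f^{(\alpha)}(x)| \leq (2\pi)^{-N}\int|t|^{|\alpha|}|\hat f(t)|\,dt$. The key ingredient is the Young-type inequality
\[
|t|^{|\alpha|} \leq e^{B\omega(|t|)+B\varphi^{*}(|\alpha|/B)},
\]
obtained from $xy\leq\varphi(x)+\varphi^{*}(y)$ with $x=\log|t|$ and $y=|\alpha|/B$, then multiplied by $B$ (the case $|t|<1$ is trivial since $|t|^{|\alpha|}\leq1$). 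Substituting this into the integral bound and invoking the hypothesis on $\int|\hat f|e^{B\omega}$ gives $|f^{(\alpha)}(x)|\leq (C/(2\pi)^N)\,e^{B\varphi^{*}(|\alpha|/B)}$, which is exactly \eqref{(a)}.

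For the converse, I will extend $\hat f$ to the Fourier--Laplace transform $\hat f(z)=\int_{K}f(x)\,e^{-ix\cdot z}\,dx$ on $\mathbb{C}^N$, which is entire because $f$ has compact support $K$. Integration by parts (boundary terms vanish on $K$) yields the familiar identity $(iz)^\alpha \hat f(z)=\widehat{\partial^\alpha f}(z)$; using $|e^{-ix\cdot z}|=e^{x\cdot\Im z}$ on $K$ together with \eqref{(a)},
\[
|z^\alpha|\,|\hat f(z)| \leq m_N(K)\,\sup_{x}|f^{(\alpha)}(x)|\,e^{H_{K}(\Im z)} \leq \frac{m_N(K)C}{(2\pi)^N}\,e^{H_{K}(\Im z)+B\varphi^{*}(|\alpha|/B)}.
\]
Since $|z|=\sum_j|z_j|\leq N\max_j|z_j|$, for every $k\in\mathbb{N}_0$ there is a multi-index $\alpha$ of length $k$ with $|z^\alpha|\geq|z|^k/N^k$, yielding the family of estimates
\[
|\hat f(z)| \leq \frac{m_N(K)C}{(2\pi)^N}\,\exp\bigl(H_K(\Im z)+B\varphi^{*}(k/B)+k\log(N/|z|)\bigr),\qquad k\in\mathbb{N}_0.
\]

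It then remains to minimize over $k$ and convert the exponent into the target $-(B/L)\omega(z)$ decay. By the duality $(\varphi^{*})^{*}=\varphi$, the continuous infimum of $B\varphi^*(s/B)-s\log(|z|/N)$ over $s\geq 0$ equals $-B\omega(|z|/N)$; approximating the continuous minimizer by the nearest integer $k$ from below and using monotonicity of $\varphi^{*}$ loses only an additive $\log(|z|/N)$, giving
\[
\inf_{k\in\mathbb{N}_0}\bigl(B\varphi^{*}(k/B)+k\log(N/|z|)\bigr)\leq -B\omega(|z|/N)+\log(|z|/N).
\]
The main obstacle is to convert this into $-(B/L)\omega(|z|)+C''$ with $L$ depending only on $\omega$ and $N$. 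For this I will combine (i) condition $(\alpha)$, iterated $\lceil\log_{2}N\rceil$ times, to produce a constant $C_N$ depending only on $\omega$ and $N$ with $\omega(|z|)\leq C_N(1+\omega(|z|/N))$, and (ii) condition $(\gamma)$ (available because $\omega\in\W$), which ensures $\log(1+t)=o(\omega(t))$ and hence $\log|z|\leq \varepsilon\,\omega(|z|)+C_\varepsilon$ for every $\varepsilon>0$. Choosing $\varepsilon=B/(2C_N)$ forces $L=2C_N$, depending only on $\omega$ and $N$, while all $B$-dependent terms collapse into the multiplicative factor $D$. The delicate point throughout is precisely this bookkeeping of constants; Lemma~2.4 may alternatively be invoked to rescale $\varphi^{*}$ cleanly if a more conceptual argument is desired.
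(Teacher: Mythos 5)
Your proof is correct and follows essentially the same route as the paper's (the paper defers this lemma to \cite{BMT}, Lemma 3.3, but proves the parallel Beurling version, Lemma \ref{3.3 B}, in full by exactly this method: Fourier inversion plus the Young inequality $|\alpha|\log|t|\le B\omega(t)+B\varphi^{*}(|\alpha|/B)$ for part one, and repeated integration by parts in the single maximal coordinate followed by Legendre duality $\varphi^{**}=\varphi$ and iteration of condition $(\alpha)$ for part two). Your absorption of the leftover $\log|z|$ term via $(\gamma)$ at the cost of doubling $L$ is precisely the point where the Roumieu case differs from the Beurling case treated in the paper, and your bookkeeping of which constants may depend on $B$ is consistent with the statement.
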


\begin{proof}
See \cite{BMT}, Lemma 3.3.
\end{proof}

\begin{Lemma}
\label{3.3 B}
Let $\omega\in\W'$ and $f\in{\D}(\mathbb{R}^{N})$.
If there is $B>0$ such that
\[
{\displaystyle \int_{\mathbb{R}^{N}}|\hat{f}(t)|e^{B\omega(t)}dt:=C<\infty},
\]
then
\begin{equation}
\underset{\alpha\in\mathbb{N}_{0}^{N}}{\sup}
\underset{x\in\mathbb{R}^{N}}{\sup}|f^{(\alpha)}(x)|
e^{-B\varphi^{*}\left(\frac{|\alpha|}{B}\right)}\leq
\frac{C}{(2\pi)^{N}}.
\label{(a)-1}
\end{equation}
If (\ref{(a)-1}) holds for $f\in{\D}(\R^N)$ and $B>0$ then there
is $D>0$, depending only on $\omega,\,N\,\mbox{and}\,\,B$, and there
is $L>0$ depending only on $\omega\,\mbox{and\,}\,N$, such that
for $K=\supp f$ and $m_{N}(K)$ its Lebesgue measure, we have that
\begin{equation}
|\hat{f}(z)|\leq m_{N}(K){\displaystyle \frac{CD}{(2\pi)^{N}}
e^{\left(H_{K}(\Im z)+\left(\frac{1}{b}-\frac{B}{L}\right)\omega(z)\right)}
\,\,\,\,\,\,\,\,\,\,\forall z\in\mathbb{C}^{N},}
\label{(b')}
\end{equation}
where $b>0$ is the constant of condition $(\gamma)'$ in
Definition~\ref{def:2.1.2}.
\end{Lemma}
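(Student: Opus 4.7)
The approach is to prove the two implications separately, following \cite{BMT}, Lemma 3.3; only the second implication requires a genuine adaptation to the weaker hypothesis $\omega\in\W'$, and it is there that the extra $\tfrac{1}{b}\omega(z)$ term on the right-hand side of (\ref{(b')}) appears. For the first implication, I would write the Fourier inversion formula $f(x)=(2\pi)^{-N}\int_{\R^N}\hat f(t)e^{ix\cdot t}\,dt$, differentiate under the integral sign to get $f^{(\alpha)}(x)=(2\pi)^{-N}\int(it)^\alpha\hat f(t)e^{ix\cdot t}\,dt$, and control $|t|^{|\alpha|}=\exp(|\alpha|\log|t|)$ via Young's inequality for conjugate convex functions applied to $\varphi$ and $\varphi^*$:
\[
|\alpha|\log|t|=B\cdot\frac{|\alpha|}{B}\log|t|\le B\varphi^*\!\left(\frac{|\alpha|}{B}\right)+B\varphi(\log|t|)=B\varphi^*\!\left(\frac{|\alpha|}{B}\right)+B\omega(t).
\]
Substituting and pulling the $\alpha$-independent factor out of the integral gives (\ref{(a)-1}) immediately. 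This step relies only on $\varphi=\omega\circ\exp$ and $(\varphi^*)^*=\varphi$, so it is the same in the $\W$ and $\W'$ settings.

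For the converse, I would start from the Paley--Wiener-type representation $\hat f(z)=\int_K f(x)e^{-iz\cdot x}\,dx$, valid for $f\in\D(\R^N)$ with $\supp f\subset K$, integrate by parts $|\alpha|$ times to obtain $z^\alpha\hat f(z)=(-i)^{|\alpha|}\int_K D^\alpha f(x)e^{-iz\cdot x}\,dx$, and plug in (\ref{(a)-1}) to get
\[
|z^\alpha\hat f(z)|\le m_N(K)\,\|D^\alpha f\|_\infty\,e^{H_K(\Im z)}\le m_N(K)\frac{C}{(2\pi)^N}e^{B\varphi^*(|\alpha|/B)}e^{H_K(\Im z)}.
\]
The multinomial identity $|z|^n=(|z_1|+\cdots+|z_N|)^n=\sum_{|\beta|=n}\binom{n}{\beta}|z^\beta|$, combined with $\sum_{|\beta|=n}\binom{n}{\beta}=N^n$, then produces
\[
|z|^n|\hat f(z)|\le N^n\, m_N(K)\frac{C}{(2\pi)^N}\,e^{B\varphi^*(n/B)}\,e^{H_K(\Im z)}.
\]
Taking the infimum over $n\in\N_0$ and invoking the duality $(\varphi^*)^*=\varphi=\omega\circ\exp$ yields, for $|z|\ge N$, $\inf_{n\ge0}N^n|z|^{-n}e^{B\varphi^*(n/B)}\le D\,e^{-B\omega(|z|/N)}$, with $D$ absorbing the rounding error in the passage from continuous Young duality to integer $n$. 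Iterating condition $(\alpha)$ about $\lceil\log_2 N\rceil$ times supplies $L>0$ with $\omega(|z|/N)\ge\omega(|z|)/L-L$, so that for $|z|\ge N$ one already arrives at an estimate of the form $\exp(H_K(\Im z)-B\omega(|z|)/L+\mathrm{const})$.

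The main obstacle, and the only real departure from the proof of Lemma~\ref{3.3A}, is merging the large-$|z|$ bound above with the trivial bound $|\hat f(z)|\le m_N(K)\tfrac{C}{(2\pi)^N}e^{H_K(\Im z)}$, which is all that the infimum over $n$ yields when it is attained at $n=0$, i.e.\ for $|z|<N$, into a \emph{single} estimate of the form stated in (\ref{(b')}), valid for every $z\in\C^N$. In the setting $\omega\in\W$ of Lemma~\ref{3.3A}, condition $(\gamma)$ makes $\log(1+|z|)=o(\omega(|z|))$ and lets the transition region be absorbed into the constant. Under the weaker $(\gamma)'$ we have only $\log(1+|z|)\le(\omega(|z|)-a)/b$; invoking this inequality at the step where a $\log|z|$-type discrepancy must be bounded against $\omega(|z|)$ is precisely what introduces the extra $\tfrac{1}{b}\omega(z)$ in the exponent of (\ref{(b')}). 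Absorbing the remaining constants into $D$ then completes the proof.
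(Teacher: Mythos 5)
Your strategy is essentially the paper's: Fourier inversion plus Young's inequality for (\ref{(a)-1}), and for the converse, integration by parts, the duality $\varphi^{**}=\varphi=\omega\circ\exp$, condition $(\alpha)$ to pass from $\omega(|z|/N)$ to $\omega(|z|)$, and condition $(\gamma)'$ to convert a $\log|z|$ discrepancy into the term $\tfrac{1}{b}\omega(z)$. (The paper integrates by parts only in the single coordinate $t_l$ with $|z_l|=\max_j|z_j|$ and uses $|z_l|\ge|z|/N$, whereas you use full multi-indices and the multinomial theorem; the two are equivalent.) One step is misplaced, though: the displayed inequality $\inf_{n\ge0}N^n|z|^{-n}e^{B\varphi^*(n/B)}\le D\,e^{-B\omega(|z|/N)}$ with a \emph{constant} $D$ is not correct in general. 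The passage from the supremum over real $x\ge0$ to the supremum over integers $n$ can lose as much as $\log(|z|/N)$, since the concave function $x\mapsto x\log(|z|/N)-B\varphi^*(x/B)$ may have slope of order $\log(|z|/N)$ at the integer nearest its maximizer; this unbounded loss --- and not the matching of the two regimes at $|z|=N$, which only costs a bounded constant because $\omega$ is bounded on $|z|\le N$ --- is precisely where $(\gamma)'$ must be invoked and where the $\tfrac{1}{b}\omega(z)$ in (\ref{(b')}) originates (in the paper this is the extra $-\log|z_l|$ appearing in the step where $\sup_{j\in\N_0}$ is compared with $\sup_{x>0}$). Since your final paragraph does apply $(\gamma)'$ to ``a $\log|z|$-type discrepancy,'' the argument closes once that discrepancy is attributed to the integer-rounding step rather than absorbed into $D$; with this correction your proof is complete and coincides with the paper's.
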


\begin{proof}
  The proof of (\ref{(a)-1}) is the same of that of (\ref{(a)}) in
  Lemma~\ref{3.3A} (see \cite[Lemma~3.3]{BMT}).
So we prove (\ref{(b')}).

By condition $(\alpha)$ there is $L>0$ such that 
\begin{equation}
\omega(Nr)\leq L\omega(r)+L\,\,\,\,\,\,\,\,\,\,\forall r>0.
\label{eq:*}
\end{equation}
Let now $z\in\mathbb{C}^{N}$ be given, let $l$ be the index with
\[
|z_{l}|=\underset{1\leq j\leq N}{\max}|z_{j}|
\]
and assume $|z_{l}|>1$.
Write then
\begin{align*}
\hat{f}(z) =\int_{K}f(t)e^{-i<t,z>}dt
 =\int_{K}\bigg(\frac{\partial^{j}}{\partial t_{l}^{j}}f(t)\bigg)
\cdot\frac{1}{(iz_{l})^{j}}\cdot e^{-i<t,z>}dt
\end{align*}
by partial integration, for all $j\in\mathbb{N}_{0}:=\N\cup\{0\}$.

In view of (\ref{(a)-1}), this implies that, for all $j\in\mathbb{N}_{0}$:
\begin{align}
|\hat{f}(z)| & \leq m_{N}(K)\frac{C}{(2\pi)^{N}}
e^{\left(B\varphi^{*}\left(\frac{j}{B}\right)-j\log|z_{l}|+H_{K}(\Im z)\right)}.
\label{c}
\end{align}

Now, note that for every $x>0$ there exists $j\in\N_{0}$ such that
$j\leq Bx<j+1$, and hence from (\ref{eq:*}) and $(\gamma)'$
\begin{align}
\underset{j\in\mathbb{N}_{0}}{\sup}\left(j\log|z_{l}|-B
\varphi^{*}\left(\frac{j}{B}\right)\right) & =
B\underset{j\in\mathbb{N}_{0}}{\sup}\left(\frac{j+1}{B}
\log|z_{l}|-\varphi^{*}\left(\frac{j}{B}\right)\right)-\log|z_{l}|\nonumber \\
 & \geq B\,\underset{x>0}{\sup}\left(x\log|z_{l}|-\varphi^{*}(x)\right)-
\log|z_{l}|\nonumber \\
 & =B\varphi^{**}\left(\log|z_{l}|\right)-\log|z_{l}|\nonumber \\
 & =B\omega(z_{l})-\log|z_{l}|\nonumber \\
 & \geq B\omega\left(\frac{z}{N}\right)-\log|z|\nonumber \\
 & \geq\frac{B}{L}\omega(z)-1-\log|z|\nonumber \\
 & \geq\frac{B}{L}\omega(z)-1-\frac{\omega(z)}{b}+\frac{a}{b}\nonumber \\
 & =\left(\frac{B}{L}-\frac{1}{b}\right)\omega(z)+\left(\frac{a}{b}-1\right).
\label{*}
\end{align}

By passing to the infimum over all $j\in\mathbb{N}_{0}$ in (\ref{c})
and by using (\ref{*}) we obtain:
\begin{align*}
|\hat{f}(z)| & \leq{\displaystyle m_{N}(K)\frac{C}{(2\pi)^{N}}
e^{\left\{ \left(\frac{1}{b}-\frac{B}{L}\right)\omega(z)+\left(1-\frac{a}{b}\right)
+H_{K}(\Im z)\right\} }}\\
 & =m_{N}(K)\frac{CD}{(2\pi)^{N}}e^{\left\{ \left(\frac{1}{b}-\frac{B}{L}\right)
\omega(z)+H_{K}(\Im z)\right\} },
\end{align*}
where $D=e^{\left(1-\frac{a}{b}\right)}$.
\end{proof}

\begin{Def}
\begin{em}
Let $\omega\in\W$ and let $K\subset\mathbb{R}^{N}$ be a compact
set.
For $\lambda>0$ we define the Banach space
\begin{equation}
{\D}_{\lambda}(K)=\left\{ f\in\mathcal C^{\infty}(\mathbb{R}^{N})|\,
\supp f\subset K\,\,\,\mbox{and}\,\,\,\|f\|_{\lambda}:=
\int_{\mathbb{R}^{N}}|\hat{f}(t)|e^{\lambda\omega(t)}dt<\infty\right\} .
\label{eq:iaia}
\end{equation}
We set
\[
\D_{\{\omega\}}(K)=\indlim_{\lambda\to0}\D_{\lambda}(K)
\]
endowed with the topology of the inductive limit.

For an open set $\Omega\subset\mathbb{R}^{N}$ we define then
\[
\D_{\{\omega\}}(\Omega)=\indlim_{K\subset\!\subset\Omega}\D_{\{\omega\}}(K)
\]
where the inductive limit is taken over all compact subsets $K$ of
$\Omega$. We endow $\D_{\{\omega\}}(\Omega)$ with the inductive limit topology.

The elements of $\D_{\{\omega\}}(\Omega)$ are called 
\emph{$\omega-$ultradifferentiable functions of Roumieu type
with compact support.}
\end{em}
\end{Def}

\begin{Def}
\begin{em}
Let $\omega\in\W'$ and let $K\subset\mathbb{R}^{N}$ be a compact
set.

For ${\D}_{\lambda}(K)$ defined as in (\ref{eq:iaia}), we set
\[
\D_{(\omega)}(K)=\projlim_{\lambda\rightarrow\infty}\D_{\lambda}(K)
\]
endowed with the topology of the projective limit.

For an open set $\Omega\subset\mathbb{R}^{N}$ we define
\[
\D_{(\omega)}(\Omega)=\indlim_{K\subset\!\subset\Omega}\D_{(\omega)}(K)
\]
where the inductive limit is taken over all compact subsets of $\Omega$.
We endow
$\D_{(\omega)}(\Omega)$ with the inductive limit topology.

The elements of $\D_{(\omega)}(\Omega)$ are called
\emph{$\omega-$ultradifferentiable functions of Beurling
type with compact support.}
\end{em}
\end{Def}

\begin{Rem}
  \label{rem218}
\begin{em}
As in \cite{BMT}, we have the following:
\begin{enumerate}
\item
  Let $K\subset\mathbb{R}^{N}$ with non-empty interior.
  If $\omega\in\W'$ then $\D_{(\omega)}(K)\neq\{0\}$;
  if $\omega\in\W$ then $\D_{\{\omega\}}(K)\neq\{0\}$ and moreover
  $\D_{(\omega)}(K)\subset\D_{\{\omega\}}(K)$.
\item
For $\omega,\sigma\in\W'$ we have that 
$\D_{(\omega)}(\mathbb{R})\subset\D_{(\sigma)}(\mathbb{R})$
iff $\sigma=O(\omega)$.
\item
We say that two functions $\omega$ and $\sigma$ are equivalent if
$\omega=O(\sigma)$ and $\sigma=O(\omega)$. Note that if 
$\omega\leq\sigma\leq C\omega$
for some $C>0$ and if $\psi(x)=\sigma(e^{x})$, then
\[
C\varphi^{*}\left(\frac{y}{C}\right)\leq\psi^{*}(y)\leq
\varphi^{*}(y)\,\,\,\,\,\,\,\,\,\,\forall y>0.
\]
With this formula, it's easy to see that definitions and most theorems
in the sequel don't change if $\omega$ is only equivalent to a weight
function.
\end{enumerate}
\end{em}
\end{Rem}

Lemmas~\ref{3.3A} and \ref{3.3 B} and the classical Paley-Wiener Theorem
for $\D(K)$ imply the following Paley-Wiener theorems for $\omega-$
ultradifferentiable functions:
\begin{Th}[Paley-Wiener Theorem for $\omega$-ultradifferentiable
functions of Roumieu type]
\label{prop:2.3.5}
\ 
Let
$\omega\in\W$ , $K\subset\mathbb{R}^{N}$ a convex compact set and
$f\in L^{1}(\mathbb{R}^{N})$.
The following are equivalent:

(1) $f\in\D_{\{\omega\}}(K)$, 

(2) $f\in\D(K)$ and for some $k\in\mathbb{N}$
\[
\underset{\alpha\in\mathbb{N}_{0}^{N}}{\sup}
\underset{x\in\mathbb{R}^{N}}{\sup}|f^{(\alpha)}(x)|
e^{\left(-\frac{1}{k}\varphi^{*}(|\alpha|k)\right)}<\infty,
\]

(3) there exist $\varepsilon,\,C>0$ such that 
\[
|\hat{f}(z)|\leq Ce^{\left(H_{K}(\Im z)-\varepsilon\omega(z)\right)}
\,\,\,\,\,\,\,\,\,\,\forall z\in\mathbb{C}^{N}.
\]
\end{Th}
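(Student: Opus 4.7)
The plan is to close the cycle $(1)\Rightarrow(2)\Rightarrow(3)\Rightarrow(1)$, using Lemma~\ref{3.3A} for the first two implications and the classical Paley-Wiener theorem together with condition $(\gamma)$ for the return.

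For $(1)\Rightarrow(2)$: if $f\in\D_{\{\omega\}}(K)=\indlim_{\lambda\to 0}\D_\lambda(K)$, then some $\lambda>0$ satisfies $\int_{\R^N}|\hat f(t)|e^{\lambda\omega(t)}\,dt<\infty$. Pick $k\in\N$ with $1/k\leq\lambda$; the same integral is then also finite with $B=1/k$ in place of $\lambda$, and the first half of Lemma~\ref{3.3A} applied to this $B$ delivers precisely the estimate in (2). For $(2)\Rightarrow(3)$: the hypothesis (2) is exactly condition (\ref{(a)}) with $B=1/k$, so the second half of Lemma~\ref{3.3A} yields constants $D,L>0$ with
\[
|\hat f(z)|\leq m_N(K)\frac{CD}{(2\pi)^N}e^{H_K(\Im z)-\frac{1}{kL}\omega(z)},\qquad\forall z\in\C^N,
\]
which is precisely (3) with $\varepsilon=1/(kL)$.

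The return $(3)\Rightarrow(1)$ splits into two parts. First, discarding the decay factor gives $|\hat f(z)|\leq Ce^{H_K(\Im z)}$ throughout $\C^N$, and the classical Paley-Wiener-Schwartz theorem, applied to $f\in L^1(\R^N)$ whose Fourier transform extends to an entire function of this exponential type, yields $\supp f\subset K$. Smoothness of $f$ follows from the rapid decay $|\hat f(t)|\leq Ce^{-\varepsilon\omega(t)}$, since $(\gamma)$ forces $\omega$ to grow faster than any logarithm, so $\hat f$ lies in the Schwartz class and $f\in\mathcal C^\infty$. Second, for the integrability condition, fix any $0<\lambda<\varepsilon$ and estimate
\[
\|f\|_\lambda=\int_{\R^N}|\hat f(t)|e^{\lambda\omega(t)}\,dt\leq C\int_{\R^N}e^{-(\varepsilon-\lambda)\omega(t)}\,dt;
\]
by $(\gamma)$, $(\varepsilon-\lambda)\omega(t)\geq (N+1)\log(1+|t|)$ for $|t|$ large, so the integrand is dominated by $(1+|t|)^{-(N+1)}$ and the integral converges. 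Hence $f\in\D_\lambda(K)\subset\D_{\{\omega\}}(K)$.

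The only genuinely delicate step is $(3)\Rightarrow(1)$, where a single bound on $\hat f$ has to yield both the support condition and the weighted Fourier integrability. Crucially, the use of the stronger condition $(\gamma)$ (rather than merely $(\gamma)'$) is what allows an arbitrarily small $\lambda>0$ with $\|f\|_\lambda<\infty$ to be extracted no matter how small $\varepsilon$ turns out to be; this is also why the Roumieu Paley-Wiener theorem is stated for $\omega\in\W$.
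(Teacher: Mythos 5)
Your proof is correct and takes essentially the same route as the paper, which defers to \cite[Prop.~3.4]{BMT} for this statement but carries out exactly this argument for the Beurling analogue in Theorem~\ref{prop:2.3.6}: Lemma~\ref{3.3A} gives $(1)\Rightarrow(2)\Rightarrow(3)$, and the classical Paley--Wiener theorem together with condition $(\gamma)$ gives $(3)\Rightarrow(1)$, with $(\gamma)$ (rather than $(\gamma)'$) being what makes $\int e^{-(\varepsilon-\lambda)\omega(t)}\,dt$ converge for an arbitrarily small exponent. One cosmetic point: the rapid decay of $\hat f$ on $\R^N$ does not by itself place $\hat f$ in $\mathcal{S}(\R^N)$, but it does make $t^{\alpha}\hat f(t)$ integrable for every $\alpha$, which is all that is needed to conclude $f\in\mathcal{C}^{\infty}$.
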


\begin{proof}
See \cite[Prop. 3.4]{BMT}, or \cite{G} for more details.
\end{proof}

\begin{Th}[Paley-Wiener Theorem for $\omega$-ultradifferentiable
functions of Beurling type]
\label{prop:2.3.6}
\ 
Let $\omega\in\W'$, $K$ $\subset\mathbb{R}^{N}$ a convex compact set
and $f\in L^{1}(\mathbb{R}^{N})$. 
The following are equivalent:

(1) $f\in\D_{(\omega)}(K)$,

(2) $f\in\D(K)$ and for all $k\in\mathbb{N}$
\[
\underset{\alpha\in\mathbb{N}_{0}^{N}}{\sup}
\underset{x\in\mathbb{R}^{N}}{\sup}|f^{(\alpha)}(x)|
e^{\left(-k\varphi^{*}(\frac{|\alpha|}{k})\right)}<\infty,
\]

(3) for all $k\in\mathbb{N}$ there is $C_{k}>0$ such that
\[
|\hat{f}(z)|\leq C_{k}e^{\left(H_{K}(\Im z)-k\omega(z)\right)}
\,\,\,\,\,\,\,\,\,\,\forall z\in\mathbb{C}^{N}.
\]
\end{Th}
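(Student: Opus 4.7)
The plan is to follow the same three-step cycle used for the Roumieu version (Theorem~\ref{prop:2.3.5}), but with Lemma~\ref{3.3 B} as the engine in place of Lemma~\ref{3.3A}, taking care that the quantifier ``for all $k$'' in the Beurling definition forces us to argue one $k$ at a time and to translate each $k$ into a (possibly much larger) auxiliary index $B$ wherever Lemma~\ref{3.3 B} is invoked. The constants $L$ and $b$ appearing in (\ref{(b')}) are harmless because we are free to take $B$ arbitrarily large.

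Implication $(1)\Rightarrow(2)$: if $f\in\D_{(\omega)}(K)=\projlim_\lambda\D_\lambda(K)$, then for every $B>0$ one has $\int|\hat f(t)|e^{B\omega(t)}\,dt<\infty$, so the first part of Lemma~\ref{3.3 B}, applied with $B=k$, directly produces the sup-estimate in (2) for each $k\in\N$. Implication $(2)\Rightarrow(3)$: the support condition in (2) makes $f\in\D(K)$, so the second part of Lemma~\ref{3.3 B} applies; given $k\in\N$, I would pick $B$ so large that $B/L-1/b\geq k$ (with $L$ the constant from that lemma), use hypothesis (\ref{(a)-1}) with that $B$ coming from (2), and read off (\ref{(b')}) as $|\hat f(z)|\leq C_k e^{H_K(\Im z)-k\omega(z)}$.

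Implication $(3)\Rightarrow(1)$: restricting the estimate of (3) to $z=t\in\R^N$ and using $(\gamma)'$ shows $\hat f$ has faster-than-polynomial decay, hence $\hat f\in L^1$ and the classical Paley–Wiener theorem (together with the $H_K(\Im z)$ factor in (3)) gives $\supp f\subset K$, i.e.\ $f\in\D(K)$. It then remains to prove that $f\in\D_\lambda(K)$ for every $\lambda>0$. Fixing $\lambda$ and choosing $k$ in (3) with $(k-\lambda)b>N$, condition $(\gamma)'$ gives
\[
\int_{\R^N}|\hat f(t)|e^{\lambda\omega(t)}\,dt
\leq C_k\int_{\R^N}e^{(\lambda-k)\omega(t)}\,dt
\leq C_k e^{(\lambda-k)a}\int_{\R^N}(1+|t|)^{-(k-\lambda)b}\,dt<\infty,
\]
which is the required bound.

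The only real obstacle is the bookkeeping of constants: one must be sure that the factor $1/b$ in the exponent of (\ref{(b')}) (coming from the weaker condition $(\gamma)'$ replacing $(\gamma)$) can always be absorbed by enlarging $B$, and dually that the rate of decay $k$ in (3) can always be pushed beyond any prescribed $\lambda$ plus the $N/b$ needed for polynomial integrability. Once this is organised, the three implications close the cycle and prove the equivalence; the details not covered by Lemma~\ref{3.3 B} are identical to the corresponding steps in \cite[Prop.~3.4]{BMT} and in the proof of Theorem~\ref{prop:2.3.5}.
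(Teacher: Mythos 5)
Your proposal is correct and follows essentially the same route as the paper: Lemma~\ref{3.3 B} drives both $(1)\Rightarrow(2)$ and $(2)\Rightarrow(3)$ (with the same device of choosing $B\geq L(k+\tfrac1b)$ so that the $\tfrac1b$ term in \eqref{(b')} is absorbed), and $(3)\Rightarrow(1)$ uses $(\gamma)'$ to turn $e^{(\lambda-k)\omega(t)}$ into an integrable power of $(1+|t|)$ and the classical Paley--Wiener theorem to get $f\in\D(K)$. No substantive changes are needed.
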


\begin{proof}
  $\mathbf{(1)\Rightarrow(2)}:$
  
If $f\in\D_{(\omega)}(K)$ then, by definition, $f\in\D(K)$ and for every
$\varepsilon>0$ 
\[
\int_{\mathbb{R}^{N}}|\hat{f}(t)|e^{\varepsilon\omega(t)}dt=:C_{\varepsilon}<\infty.
\]
So, by Lemma \ref{3.3 B}, for all $\varepsilon>0$
\begin{equation}
\underset{\alpha\in\mathbb{N}_{0}^{N}}{\sup}
\underset{x\in\mathbb{R}^{N}}{\sup}|f^{(\alpha)}(x)|
e^{\left(-\varepsilon\varphi^{*}\left(\frac{|\alpha|}{\varepsilon}\right)\right)}
<\infty,
\label{eq:starstar}
\end{equation}
and hence (2), since ${\displaystyle \frac{\varphi^{*}(x)}{x}}$ is increasing.

\textbf{$\mathbf{(2)\Rightarrow(3)}$}:

If $f\in\D(K)$ satisfies (2) then it
also satisfies (\ref{eq:starstar}) for every $\varepsilon>0$ since 
$\varphi^*(s)/s$ is increasing, and hence,
by Lemma \ref{3.3 B}, there exists $D_{\varepsilon}>0$ such that 
\begin{equation}
|\hat{f}(z)|\leq{\displaystyle D_{\varepsilon}
e^{\left(H_{K}(\Im z)+\left(\frac{1}{b}-\frac{\varepsilon}{L}\right)\omega(z)\right)}.}
\label{eq:stella}
\end{equation}
Therefore for every $\tilde{\varepsilon}>0$ we can choose 
\[
\varepsilon=L\left(\tilde{\varepsilon}+\frac{1}{b}\right)>0
\]
in (\ref{eq:starstar}), so that (\ref{eq:stella}) becomes 
\[
|\hat{f}(z)|\leq{\displaystyle D_{\varepsilon}
e^{(H_{K}(\Im z)-\tilde{\varepsilon}\omega(z))},}
\]
and hence (3).

$\mathbf{(3)\Rightarrow(1)}:$

By (3) and $(\gamma)'$ we have that for all $\lambda>0$, taking
$k\in\N$ with $k>\lambda$, there exist $C_{\lambda},\,C'_{\lambda}>0$
such that
\begin{align}
\nonumber
\int_{\R^N}|\hat{f}(t)|e^{\lambda\omega(t)}dt & 
\leq C_{\lambda}\int_{\R^N}e^{(-k+\lambda)\omega(t)}dt\\
\label{G2}
 & \leq C_{\lambda}\int_{\R^N}e^{(-k+\lambda)\left(a+b\log(1+t)\right)}dt\\
\nonumber
 & =C'_{\lambda}\int_{\R^N}(1+t)^{b(\lambda-k)}dt.
\end{align}
For ${\displaystyle k>\frac{N+1}{b}+\lambda}$ the above integral
is finite and hence there exists $C''_{\lambda}>0$ such that 
\[
\int_{\R^N}|\hat{f}(t)|e^{\lambda\omega(t)}dt\leq C''_{\lambda}.
\]
To prove that $f\in{\D}(K)$ note that (3) and $(\gamma)'$ imply
that for every $k\in\N$ there exists $C_{k}>0$ such that 
\begin{align*}
\left|\hat{f}(z)\right| & \leq C_{k}e^{H_{K}(\Im z)-k\omega(z)}\\
 & \leq C_{k}e^{H_{K}(\Im z)-k\left(a+b\log(1+|z|)\right)}\\
 & =C_{k}e^{-ak}e^{H_{K}(\Im z)}\left(1+|z|\right)^{-bk}
\,\,\,\,\,\,\,\,\,\,\forall z\in\mathbb{C}^{N}.
\end{align*}
Therefore for every $n\in\N$ there exists $C_{n}>0$ such that
\[
\left|\hat{f}(z)\right|\leq C_{n}
e^{H_{K}(\Im z)}\left(1+|z|\right)^{-n}
\,\,\,\,\,\,\,\,\,\,\forall z\in\mathbb{C}^{N}.
\]
By the classical Paley-Wiener Theorem  we finally
have that $f\in{\D}(K)$ and hence the theorem is proved. 
\end{proof}

\begin{Rem}
\begin{em}
The inequality \eqref{G2} enlightens the sufficiency of condition $(\gamma)'$
on the weight $\omega$: by the arbitrariety of $\lambda$ we can allow a fixed 
$b>0$ to make the integral convergent. On the contrary, in the Roumieu
case (Theorem~\ref{prop:2.3.5}) we need condition $(\gamma)$, i.e.
$\log(1+t)=o(\omega(t))$ as 
$t\to+\infty$, since $\lambda$ is fixed.
\end{em}
\end{Rem}

For a sequence $\mathbb{P}=(p_{n})_{n\in\N}$ of continuous functions
$p_{n}:\mathbb{C}^{N}\rightarrow\mathbb{R}$, we define
\[
A_{\mathbb{P}}(\mathbb{C}^{N}):=\left\{ f\in{\mathcal O}(\mathbb{C}^{N})\,|\,
\mbox{\,for some\,\,}n:\,\,\underset{z\in\mathbb{C}^{N}}{\sup}
|f(z)|e^{-p_{n}(z)}<\infty\right\} ,
\]
and
\[
A_{\mathbb{P}}^{0}(\mathbb{C}^{N}):=\left\{ f\in{\mathcal O}(\mathbb{C}^{N})\,|\,
\mbox{\,for all\,\,}n:\,\,\underset{z\in\mathbb{C}^{N}}{\sup}|f(z)|
e^{-p_{n}(z)}<\infty\right\} ,
\]
where ${\mathcal O}(\mathbb{C}^{N})$ is the set of all entire functions
on $\mathbb{C}^{N}$.

Let $\omega$ be a weight function and $K\subset\mathbb{R}^{N}$ a
convex compact set. Define
\[
\mathbb{P}_K:=\left\{ p_{n}:z\mapsto H_{K}(\Im z)-\frac{1}{n}\omega(z),\,n
\in\mathbb{N}\right\} 
\]
and
\[
\mathbb{M}_K:=\left\{ m_{n}:z\mapsto H_{K}(\Im z)-n\omega(z),\,n\in
\mathbb{N}\right\} .
\]

For an open convex set $\Omega\subset\R^N$ and a convex compact exhaustion
$\mathring{K_{1}}\subset\mathring{K_{2}}\subset
\mathring{K_{3}}\subset\ldots$ of $\Omega$, define also
\beqsn
\mathbb{P}_\Omega:=\left\{ p_{n}:z\mapsto H_{K_n}(\Im z)-\frac{1}{n}\omega(z),\,n
\in\mathbb{N}\right\}.
\eeqsn

From the Paley-Wiener Theorems \ref{prop:2.3.5}-\ref{prop:2.3.6}
(cf. also \cite[Prop. 3.5]{BMT}) we get:

\begin{Prop}
We have the following:
\begin{enumerate}
\item
Let $K$ be a compact convex set of $\R^N$. Then:
\begin{enumerate}
\item
if $\omega\in\W$ 
\[
\D_{\{\omega\}}(K)\cong A_{\mathbb{P}_K}(\mathbb{C}^{N});
\]
\item
if $\omega\in\W'$
\[
\D_{(\omega)}(K)\cong A_{\mathbb{M}_K}^{0}(\mathbb{C}^{N}).
\]
\end{enumerate}
\item
For $\omega\in\W$, $\Omega\subset\mathbb{R}^{N}$ an open
convex set:
\[
\D_{\{\omega\}}(\Omega)\cong A_{\mathbb{P}_{\Omega}}(\mathbb{C}^{N}).
\]
\end{enumerate}
The isomorphisms are given by the Fourier-Laplace transform.
\end{Prop}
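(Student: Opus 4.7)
The plan is to derive all three isomorphism claims as direct consequences of the Paley-Wiener Theorems~\ref{prop:2.3.5} and~\ref{prop:2.3.6}, with the Fourier-Laplace transform $f\mapsto\hat f$ realising the isomorphism in every case.

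For (1)(a), I show that $f\in\D_{\{\omega\}}(K)$ if and only if $\hat f\in A_{\mathbb{P}_K}(\C^N)$. The equivalence (1)$\Leftrightarrow$(3) of Theorem~\ref{prop:2.3.5} states that, for $f\in L^1(\R^N)$, membership of $f$ in $\D_{\{\omega\}}(K)$ is equivalent to the existence of $\varepsilon,C>0$ with $|\hat f(z)|\leq Ce^{H_K(\Im z)-\varepsilon\omega(z)}$ for all $z\in\C^N$; identifying $\varepsilon$ with $1/n$ for some large $n\in\N$, this is exactly the defining condition of $A_{\mathbb{P}_K}(\C^N)$. One direction is then immediate. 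For the converse I start from $F\in A_{\mathbb{P}_K}(\C^N)$, fix $n\in\N$ and $C>0$ with $|F(z)|\leq Ce^{H_K(\Im z)-(1/n)\omega(z)}$, and observe that by $(\gamma)$ the restriction $|F(t)|\leq Ce^{-(1/n)\omega(t)}$ on $\R^N$ decays faster than any polynomial, so $F|_{\R^N}\in L^1(\R^N)$. Setting $f$ equal to the inverse Fourier transform of $F|_{\R^N}$ gives $\hat f=F$, and the Paley-Wiener theorem applied in the direction (3)$\Rightarrow$(1) delivers $f\in\D_{\{\omega\}}(K)$.

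The proof of (1)(b) is identical in structure, now using Theorem~\ref{prop:2.3.6}: the defining property of $\D_{(\omega)}(K)$ corresponds via condition (3) of that theorem to the existence, for every $k\in\N$, of $C_k>0$ with $|\hat f(z)|\leq C_ke^{H_K(\Im z)-k\omega(z)}$, which is precisely the defining condition of $A_{\mathbb{M}_K}^{0}(\C^N)$. For the converse direction, $L^1$-integrability of $F|_{\R^N}$ is obtained from $(\gamma)'$ by picking $k$ with $kb>N$, which gives polynomial decay $(1+|t|)^{-kb}$ and hence integrability on $\R^N$.

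For (2), I fix a convex compact exhaustion $\mathring K_1\subset\mathring K_2\subset\ldots$ of $\Omega$ and use $\D_{\{\omega\}}(\Omega)=\indlim_n\D_{\{\omega\}}(K_n)$; applying (1)(a) to each $K_n$ reduces the claim to identifying $\bigcup_n A_{\mathbb{P}_{K_n}}(\C^N)$ with $A_{\mathbb{P}_\Omega}(\C^N)$. An entire $F$ lies in the left-hand side iff there exist $n\in\N$ and $\varepsilon,C>0$ with $|F(z)|\leq Ce^{H_{K_n}(\Im z)-\varepsilon\omega(z)}$; since $K_n\subset K_m$ gives $H_{K_n}\leq H_{K_m}$, choosing $m\geq n$ with $1/m\leq\varepsilon$ shows this is equivalent to the existence of a single $m\in\N$ with $|F(z)|\leq Ce^{H_{K_m}(\Im z)-(1/m)\omega(z)}$, i.e.\ to membership in $A_{\mathbb{P}_\Omega}(\C^N)$. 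The one substantive point to check throughout is the integrability verification on $\R^N$ in the converse directions of (1), which is exactly where $(\gamma)$ in the Roumieu case and $(\gamma)'$ in the Beurling case are used; the remainder of the argument is a bookkeeping translation of the Paley-Wiener theorems.
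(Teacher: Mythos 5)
Your proposal is correct and follows exactly the route the paper intends: the paper states this proposition with no proof beyond the citation ``From the Paley-Wiener Theorems~\ref{prop:2.3.5}--\ref{prop:2.3.6} \ldots we get'', and your argument is precisely the fleshing-out of that reduction, including the correct identification of $\varepsilon$ with $1/n$, the role of $(\gamma)$ versus $(\gamma)'$ in the integrability check, and the cofinality argument for the exhaustion in part (2). The only points you gloss over are routine: that $\mathcal F^{-1}(F|_{\R^N})$ is well defined and its Fourier--Laplace transform agrees with $F$ on all of $\C^N$ by analytic continuation (both implicit in the $(3)\Rightarrow(1)$ direction of the cited theorems, via the classical Paley--Wiener theorem), and that the paper does not topologize $A_{\mathbb{P}}(\C^N)$, so the set-level bijection you establish is all that is required.
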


As in \cite{BMT}, we can collect some more properties on these spaces of 
$\omega$-ultradifferentiable functions with compact support, taking 
$\omega\in\W'$ in the Beurling case and $\omega\in\W$ in the Roumieu case.

\begin{Cor}
  Let $K\subset\mathbb{R}^{N}$ be compact and $\Omega\subset\mathbb{R}^{N}$ be
  open.

(1) Let $\omega\in\W$. Then $\D_{\{\omega\}}(K)$ is a (DFN)-space, i.e. the strong
dual of a nuclear Fr\'echet space. In particular, it's complete, reflexive
and nuclear.

(2) Let $\omega\in\W'$. Then $\D_{(\omega)}(K)$ is a (FN)-space, i.e. a nuclear
Fr\'echet space.
\end{Cor}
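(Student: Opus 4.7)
The plan is to reduce both assertions to standard nuclearity/duality results for countable (pro)jective or inductive limits of Banach spaces, transferring the problem via the Paley--Wiener isomorphisms (Theorems~\ref{prop:2.3.5} and \ref{prop:2.3.6}) to weighted spaces of entire functions, where nuclearity of the connecting maps can be verified by Cauchy estimates.

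First I would treat (2). By definition, $\D_{(\omega)}(K)=\projlim_{\lambda\to\infty}\D_\lambda(K)$ and each $\D_\lambda(K)$ is a Banach space, being isometric via $f\mapsto\hat f$ to a closed subspace of $L^1(\R^N,e^{\lambda\omega(t)}dt)$. Restricting the projective system to $\lambda=n\in\N$ gives a countable fundamental system of seminorms, so $\D_{(\omega)}(K)$ is metrizable; completeness is immediate from closedness of the diagonal in $\prod_n\D_n(K)$. For nuclearity it suffices to exhibit, for every $n\in\N$, some $m>n$ such that the canonical inclusion $\D_m(K)\hookrightarrow\D_n(K)$ is a nuclear map. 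Passing through Paley--Wiener, this becomes the question of nuclearity of the inclusion between the weighted spaces of entire functions with bounds $e^{H_K(\Im z)-m\omega(z)}$ and $e^{H_K(\Im z)-n\omega(z)}$.

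The key input, and the main obstacle, is this nuclearity of the linking maps. I would argue it as follows. By Cauchy's formula on polydiscs of radius comparable to~$1$, the weighted sup-bounds on an entire function $F$ control the weighted $L^1$- (or $L^2$-) norms of $F$ on the whole of $\C^N$ up to a modified weight. Then the inclusion can be factored as $F\mapsto F\cdot e^{-(m-n)\omega/2}\mapsto F$, where the middle space is a weighted $L^2$-space on $\C^N$; since by $(\gamma)'$ we have $e^{-\varepsilon\omega(z)}\leq e^{-\varepsilon a}(1+|z|)^{-\varepsilon b}$, choosing $m-n$ large enough (namely $b(m-n)>2N$ suffices) makes $e^{-(m-n)\omega}$ belong to $L^1(\C^N)$, and the resulting integral operator has an absolutely summable kernel, hence is nuclear. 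This is the standard Mityagin--Hen\-kin-type argument, and the condition $(\gamma)'$ is precisely what guarantees the polynomial decay needed for integrability.

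For~(1), the proof proceeds in parallel. Taking $\lambda=1/n$ for $n\in\N$ realizes $\D_{\{\omega\}}(K)=\indlim_{n}\D_{1/n}(K)$ as a countable inductive limit of Banach spaces. Exactly the same Paley--Wiener/Cauchy estimate, now carried out under the stronger assumption $\omega\in\W$ (which gives super-polynomial decay of $e^{-\varepsilon\omega}$ and in particular its integrability), shows that for $n<m$ the connecting map $\D_{1/m}(K)\to\D_{1/n}(K)$ is nuclear. Hence $\D_{\{\omega\}}(K)$ is a nuclear (LB)-space with injective and nuclear linking maps, i.e.\ a (DFS)-space that is moreover nuclear, which is the definition of a (DFN)-space; its strong dual is then a nuclear Fr\'echet space, and completeness, reflexivity, and nuclearity follow from general theory (Floret--Wloka). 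The only genuinely new work beyond what is available in \cite{BMT} lies in checking that the weakening of $(\gamma)$ to $(\gamma)'$ in the Beurling case still suffices for the integrability step above, which is exactly what the inequality used in the proof of Theorem~\ref{prop:2.3.6}, line \eqref{G2}, provides.
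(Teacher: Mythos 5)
Your argument is correct and is essentially the proof the paper relies on: the Corollary is stated here without proof by reference to \cite{BMT}, whose argument is exactly your reduction via the Paley--Wiener isomorphisms to weighted sup-normed spaces of entire functions, with nuclearity of the linking maps coming from the sub-mean-value (Cauchy) estimate together with integrability of the weight quotient $e^{-\varepsilon\omega}$ --- guaranteed by $(\gamma)'$ after enlarging $m-n$ in the Beurling case, and by $(\gamma)$ for the fixed small gaps $\tfrac1n-\tfrac1m$ forced in the Roumieu case. Only note that in the Roumieu inductive system the arrows go the other way: since $\|f\|_{1/n}\geq\|f\|_{1/m}$ for $n<m$, the linking inclusions are $\D_{1/n}(K)\hookrightarrow\D_{1/m}(K)$, and it is these that you must show to be nuclear.
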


\begin{Lemma}
Let $\omega\in\W$, $f\in\D(\R^N)$, $g\in\D_{\{\omega\}}(\R^N)$.
Then we have:

$\ds{\it (1)_{\{\omega\}}}$
$\quad f*g\in\D_{\{\omega\}}(\R^N)$,

$\ds{\it (2)_{\{\omega\}}}$
$\quad\supp\, (f*g)\subset\supp f+\supp g$,

$\ds{\it (3)_{\{\omega\}}}$
$\quad\widehat{f*g}(z)=\hat{f}(z)\hat{g}(z)$.

\vspace*{1.5mm}
\noindent
Let $\omega\in\W'$, $f\in\D(\R^N)$, $g\in\D_{(\omega)}(\R^N)$.
Then we have:

\vspace*{1mm}
$\ds{\it (1)_{(\omega)}}$ $\quad f*g\in\D_{(\omega)}(\R^N)$,

$\ds{\it (2)_{(\omega)}}$ $\quad\supp\,(f*g)\subset\supp f+\supp g$,

$\ds{\it (3)_{(\omega)}}$ $\quad\widehat{f*g}(z)=\hat{f}(z)\hat{g}(z)$.
\end{Lemma}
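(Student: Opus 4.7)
My plan is to treat both the Roumieu and Beurling cases in parallel, since the arguments differ only in whether the integral condition holds for one $\lambda$ or for every $\lambda$. Properties $(2)$ and $(3)$ are purely classical: statement $(2)$ follows from the elementary fact about supports of convolutions of compactly supported functions, and statement $(3)$ follows from Fubini applied to
\[
\widehat{f*g}(z)=\int_{\R^N}\int_{\R^N} f(x-y)g(y)\,dy\,e^{-i\langle x,z\rangle}dx,
\]
which is justified because both $f$ and $g$ have compact support (so $f*g\in\D(\R^N)$, in particular $L^1$) and $\hat f,\hat g$ are entire. Hence $(3)$ holds and the map $\widehat{f*g}=\hat f\cdot\hat g$ is the key identity for the remaining step.

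For $(1)$ I would use the Paley--Wiener-type definition of the $\|\cdot\|_\lambda$ norm. Since $f\in\D(\R^N)$, the classical Paley--Wiener theorem implies that $\hat f$ is bounded on $\R^N$ (in fact rapidly decreasing), so $M:=\|\hat f\|_{L^\infty(\R^N)}<\infty$. Then, using $(3)$,
\[
\|f*g\|_\lambda=\int_{\R^N}|\hat f(t)|\,|\hat g(t)|\,e^{\lambda\omega(t)}\,dt
\leq M\int_{\R^N}|\hat g(t)|\,e^{\lambda\omega(t)}\,dt=M\|g\|_\lambda.
\]
In the Roumieu case $\omega\in\W$, by definition of the inductive limit $\D_{\{\omega\}}(\R^N)=\indlim_{K\Subset\R^N}\indlim_{\lambda\to 0^+}\D_\lambda(K)$, there exist a compact $K\supset\supp g$ and some $\lambda>0$ with $\|g\|_\lambda<\infty$, hence $\|f*g\|_\lambda<\infty$. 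In the Beurling case $\omega\in\W'$, for every $\lambda>0$ we have $\|g\|_\lambda<\infty$, hence $\|f*g\|_\lambda<\infty$ for every $\lambda>0$. Combining with $(2)$ gives that $\supp(f*g)\subset\supp f+\supp g$ is compact, so $f*g$ lies in $\D_\lambda(\supp f+\supp g)$ for the appropriate range of $\lambda$, proving $(1)$ in both cases.

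There is essentially no genuine obstacle here; the only point deserving attention is the justification of Fubini in $(3)$, which is automatic because $f,g\in L^1\cap\mathcal{E}'$ so $f*g\in L^1$ with compact support. Note that one does not need any decay of $\hat f$ beyond boundedness, because the weight $e^{\lambda\omega(t)}$ in $\|\cdot\|_\lambda$ is entirely absorbed by $\hat g$; the stronger fact that $\hat f$ is rapidly decreasing is not used. This is consistent with the asymmetric roles of $f$ and $g$: only $g$ needs to be ultradifferentiable.
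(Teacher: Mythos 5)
Your proof is correct, and it follows the standard argument (the paper itself gives no proof here, deferring to \cite{BMT}, where the same route is taken): the identity $\widehat{f*g}=\hat f\,\hat g$ plus the bound $\|\hat f\|_{L^\infty}\leq\|f\|_{L^1}$ reduce $(1)$ to $\|f*g\|_{\lambda}\leq \|f\|_{L^1}\,\|g\|_{\lambda}$, with the quantifier on $\lambda$ (some $\lambda$ vs.\ all $\lambda$) handled exactly as you do in the two cases. No gaps.
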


\begin{Lemma}
\label{lem:3.8}
Let $K_{1},\,K_{2}\subset\R^N$ be compact sets with 
$K_{1}\subset\mathring{K_{2}}$.

(a) Let $\omega,\,\sigma\in\W$ with $\sigma\leq\omega$. Then
for all $f\in{\D}_{\{\sigma\}}(K_{1})$ there is a sequence 
$\{f_{n}\}_{n\in\mathbb{N}}$
in $\D_{\{\omega\}}(K_{2})$ with $\underset{n\rightarrow\infty}{\lim}f_{n}=f$
in ${\D}_{\{\sigma\}}(K_{2})$.\\

(b) Let $\omega,\,\sigma\in\W'$ with $\sigma\leq\omega$. Then
for all $f\in{\D}_{(\sigma)}(K_{1})$ there is a sequence 
$\{f_{n}\}_{n\in\mathbb{N}}$
in $\D_{(\omega)}(K_{2})$ with $\underset{n\rightarrow\infty}{\lim}f_{n}=f$
in ${\D}_{(\sigma)}(K_{2})$.\\

(c) Let $\omega\in\W'$. Then for all $f\in{\D}(K_{1})$ there
is a sequence $\{f_{n}\}_{n\in\mathbb{N}}$ in $\D_{(\omega)}(K_{2})$
with $\underset{n\rightarrow\infty}{\lim}f_{n}=f$ in $\D(K_{2})$.
\end{Lemma}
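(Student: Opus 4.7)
The plan is the standard mollifier argument: approximate $f$ by $f_n := f * \rho_n$, where $\{\rho_n\}$ is a shrinking family of normalized mollifiers in the finer class $\D_{(\omega)}$ (resp.\ $\D_{\{\omega\}}$ in case (a)). Since $K_1$ is compact and $K_1 \subset \mathring{K_2}$, fix $\varepsilon > 0$ with $K_1 + [-\varepsilon,\varepsilon]^N \subset \mathring{K_2}$.

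The first substantive step is to produce $\rho \in \D_{(\omega)}(\mathbb{R}^N)$ with $\supp\rho \subset [-\varepsilon,\varepsilon]^N$ and $\int\rho = 1$. Proposition~\ref{2.6} yields a nonzero $H \in \D_{(\omega)}(\mathbb{R}^N)$ with $\supp H \subset [-\varepsilon,\varepsilon]^N$, but $\int H$ may vanish. To circumvent this I would take $\rho_0 := |H|^2 = H\overline{H}$, which has strictly positive integral and the same support. Membership in $\D_{(\omega)}$ is preserved because $\D_{(\omega)}$ is closed under pointwise multiplication: writing $\widehat{H\overline{H}} = (2\pi)^{-N}\,\hat H * \overline{\hat H(-\cdot)}$ and using Lemma~\ref{lemma12BMT} in the form $e^{\lambda\omega(t)} \leq e^{\lambda K}\,e^{\lambda K\omega(s)}\,e^{\lambda K\omega(t-s)}$, Fubini factors $\int|\widehat{H\overline{H}}(t)|e^{\lambda\omega(t)}\,dt$ into a product of finite integrals. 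Set $\rho := \rho_0/\int\rho_0$ and $\rho_n(x) := n^N\rho(nx)$; then $\supp\rho_n \subset \tfrac{1}{n}[-\varepsilon,\varepsilon]^N$ and $\int\rho_n = 1$. For $n$ sufficiently large $\supp f_n \subset K_2$, and the convolution lemma stated just above gives $f_n \in \D_{\{\omega\}}(K_2)$ in case (a) and $f_n \in \D_{(\omega)}(K_2)$ in cases (b) and (c) (the latter using only $f \in \D(K_1)$).

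For (a) and (b) I would verify convergence on the Fourier side, from the identity $\widehat{f_n - f}(t) = \hat f(t)\bigl(\hat\rho(t/n) - 1\bigr)$. Since $\rho \in L^1$ with $\int\rho = 1$, $\hat\rho$ is continuous, bounded, and satisfies $\hat\rho(0) = 1$, so $|\hat\rho(t/n) - 1| \leq 1 + \|\hat\rho\|_\infty$ while $\hat\rho(t/n) \to 1$ pointwise. In (a), pick $\lambda_0 > 0$ with $\int|\hat f(t)|e^{\lambda_0\sigma(t)}\,dt < \infty$; dominated convergence yields $\|f_n - f\|_{\lambda_0} \to 0$ in the analogue of \eqref{eq:iaia} with $\sigma$ in place of $\omega$, hence convergence in $\D_{\{\sigma\}}(K_2)$. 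In (b), the same estimate at every level $\lambda > 0$ yields convergence in the projective limit $\D_{(\sigma)}(K_2)$. For (c), it suffices to invoke the classical mollifier theorem: $(f*\rho_n)^{(\alpha)} = f^{(\alpha)}*\rho_n \to f^{(\alpha)}$ uniformly for every multiindex $\alpha$, since each $f^{(\alpha)} \in \D(K_1)$ is uniformly continuous and $\{\rho_n\}$ is an approximate identity.

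The only genuine obstacle is producing $\rho \in \D_{(\omega)}$ with nonzero integral, since Proposition~\ref{2.6} does not immediately deliver this; the $|H|^2$ trick, powered by the algebra property of $\D_{(\omega)}$ (itself a consequence of Lemma~\ref{lemma12BMT}), resolves this cleanly, after which the rest of the proof reduces to dominated convergence and a classical smoothing statement.
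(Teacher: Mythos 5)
Your argument is correct and is essentially the proof the paper relies on: Lemma~\ref{lem:3.8} is quoted from Braun--Meise--Taylor, whose Lemma~3.8 is proved by exactly this mollification of $f$ with a normalized, compactly supported bump taken from the finer class, followed by dominated convergence on the Fourier side. The one step worth making explicit is that the dilates $\rho_n(x)=n^N\rho(nx)$ still belong to $\D_{(\omega)}$ (resp.\ $\D_{\{\omega\}}$): since $\widehat{\rho_n}(t)=\hat\rho(t/n)$, this follows from condition $(\alpha)$, which gives $\omega(nt)\leq C_n(1+\omega(t))$; with that observation your proof closes.
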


\begin{Prop}
\label{prop:3.9}
Let $\omega,\,\sigma\in\W$ with $\sigma=o(\omega)$.
Then the inclusions 
\[
\D_{(\omega)}(\Omega)\hookrightarrow\D_{\{\omega\}}(\Omega)
\hookrightarrow\D_{(\sigma)}(\Omega)\hookrightarrow\D(\Omega)
\]
are continuous and sequentially dense for each open set $\Omega\subset\R^N$.
\end{Prop}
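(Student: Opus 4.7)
\emph{Proof proposal.} The plan is to reduce to a convex compact exhaustion of $\Omega$ and, on each compactum, verify continuity by comparing defining norms and sequential density either by Beurling mollification or by invoking Lemma~\ref{lem:3.8}.

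For continuity, all three inclusions reduce to norm comparisons. The first is tautological: $\D_{(\omega)}(K)=\projlim_\lambda\D_\lambda(K)$ projects continuously onto each $\D_\lambda(K)$, which injects into $\indlim_{\lambda\to 0}\D_\lambda(K)=\D_{\{\omega\}}(K)$. For the second, the hypothesis $\sigma=o(\omega)$ supplies, for every $\mu,\lambda>0$, a constant $C$ with $\lambda\sigma(t)\le\mu\omega(t)+C$ for all $t\ge 0$; hence the $\lambda$-norm built from $\sigma$ is dominated by the $\mu$-norm built from $\omega$, and the induced map between the ind./proj.~limits is continuous. For the third, Fourier inversion together with condition $(\gamma)'$ gives $|t|^{|\alpha|}\le e^{-\lambda a}e^{\lambda\sigma(t)}$ whenever $\lambda b\ge|\alpha|$, so $\sup_x|\partial^\alpha f(x)|\le (2\pi)^{-N}\int|\hat f(t)||t|^{|\alpha|}\,dt\le C_\alpha\|f\|_\lambda$ with the last norm built from $\sigma$, yielding continuity of $\D_{(\sigma)}(K)\to\D(K)$.

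For the density of $\D_{(\omega)}$ in $\D_{\{\omega\}}$, which is not covered by Lemma~\ref{lem:3.8}, I would use a Beurling mollifier. Fix $f\in\D_{\{\omega\}}(K_1)$ with $\int|\hat f(t)|e^{\lambda_0\omega(t)}\,dt<\infty$ for some $\lambda_0>0$; by Proposition~\ref{2.6} there is a nonzero $g\in\D_{(\omega)}(B_\varepsilon)$, normalize so that $\int g=1$, and set $g_n(x):=n^Ng(nx)\in\D_{(\omega)}(B_{\varepsilon/n})$. Then $f_n:=f*g_n$ has support in a fixed $K_2\subset\!\subset\Omega$ for large $n$. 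Iterating condition $(\alpha)$ gives $\omega(t)\le L_n(1+\omega(t/n))$ for every $n$, so the full decay $|\hat g_n(t)|=|\hat g(t/n)|\le C_k e^{-k\omega(t/n)}$ translates into enough decay of $\widehat{f_n}=\hat f\,\hat g_n$ to place $f_n$ in $\D_{(\omega)}(K_2)$ for each fixed $n$. Convergence $f_n\to f$ in $\D_{\lambda_0}(K_2)\subset\D_{\{\omega\}}(K_2)$ follows by dominated convergence in
\[
\|f_n-f\|_{\lambda_0}=\int|\hat f(t)||\hat g(t/n)-1|e^{\lambda_0\omega(t)}\,dt,
\]
since $\hat g(t/n)\to\hat g(0)=1$ pointwise, $|\hat g(t/n)-1|\le 1+\|\hat g\|_\infty$ uniformly in $n,t$, and $|\hat f(t)|e^{\lambda_0\omega(t)}$ is integrable.

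The two remaining density statements I would read directly from Lemma~\ref{lem:3.8}. Since $\sigma=o(\omega)$ we may, by Remark~\ref{rem218}(3), pass to an equivalent weight with $\sigma\le\omega$; Lemma~\ref{lem:3.8}(b) then produces, for any $f\in\D_{(\sigma)}(K_1)$, a sequence in $\D_{(\omega)}(K_2)\subset\D_{\{\omega\}}(K_2)$ converging to $f$ in $\D_{(\sigma)}(K_2)$, giving the second density, while Lemma~\ref{lem:3.8}(c) is exactly the third density. The main technical obstacle throughout is the verification that $f*g_n$ lies in the Beurling class $\D_{(\omega)}$ rather than merely in $\D_{\{\omega\}}$: this is the only point where the Roumieu/Beurling gap must be bridged, and it rests critically on $(\alpha)$ being used to convert the decay of $\hat g_n$ at scale $t/n$ into comparable decay at scale $t$.
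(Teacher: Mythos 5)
The paper itself offers no proof of this proposition --- it is one of the statements imported verbatim from Braun--Meise--Taylor --- so there is no in-paper argument to compare against; your proof follows the standard route of that reference (norm comparisons for continuity, the approximation Lemma~\ref{lem:3.8} plus a Beurling mollifier for density) and is essentially correct. The continuity reductions are fine, and reading the second and third density statements off Lemma~\ref{lem:3.8}(b),(c) after normalizing $\sigma\le\omega$ is exactly what is intended. Two remarks on the mollification step. First, the one genuine (though small) gap: Proposition~\ref{2.6} gives a nonzero $g$ with the required Fourier decay, but nothing guarantees $\hat g(0)=\int g\neq0$, so you cannot simply ``normalize so that $\int g=1$''. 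This is repaired by modulation: pick $t_0$ with $\hat g(t_0)\neq0$ and replace $g$ by $e^{i\langle\cdot,\,t_0\rangle}g$; the support is unchanged and, by Lemma~\ref{lemma12BMT}, $\int|\hat g(t-t_0)|e^{m\omega(t)}\,dt\le Ce^{mK\omega(t_0)}\int|\hat g(s)|e^{mK\omega(s)}\,ds<\infty$ for every $m$, so the translated transform still has Beurling-type decay. Second, the point you single out as the ``main technical obstacle'' --- that $f*g_n$ lands in $\D_{(\omega)}$ rather than merely in $\D_{\{\omega\}}$ --- is already available in the paper as the convolution lemma $(1)_{(\omega)}$ ($f\in\D$, $g\in\D_{(\omega)}$ implies $f*g\in\D_{(\omega)}$), so you may quote it instead of re-deriving the scale transfer; your direct verification via iterating condition $(\alpha)$ to get $\omega(t)\le L_n(1+\omega(t/n))$ is nevertheless correct and is in substance how that lemma is proved. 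The dominated-convergence argument for $\|f_n-f\|_{\lambda_0}\to0$ and the passage from convergence in $\D_{\lambda_0}(K_2)$ to convergence in $\D_{\{\omega\}}(\Omega)$ are both sound.
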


Let us now introduce the algebras of $\omega$-ultradifferentiable
functions of Beurling and of Roumieu type with arbitrary support.
Here again we need $\omega\in\W$ in the Roumieu case, while we can allow
$\omega\in\W'$ in the Beurling case.

\begin{Def}
  \label{def227}
\begin{em}
For
$\omega\in\W$ and an open set $\Omega\subset\R^N$, we define
\begin{align*}
  \E_{\{\omega\}}(\Omega):= & \Big\{f\in{\mathcal C}^\infty(\Omega)|
  \mbox{\,\,for all compact}\,\,
K\subset\Omega\,\mbox{\,there is }m\in\mathbb{N}\,\mbox{\,with}\\
 & \underset{\alpha\in\mathbb{N}_{0}^{N}}{\sup}\underset{x\in K}{\sup}
|f^{(\alpha)}(x)|e^{\left(-\frac{1}{m}\varphi^{*}(m|\alpha|)\right)}<\infty\Big\}.
\end{align*}

For $\omega\in\W'$ and an open set $\Omega\subset\R^N$ we define
\begin{align*}
  \E_{(\omega)}(\Omega):= & \Big\{f\in{\mathcal C}^\infty(\Omega)|
  \mbox{\,\,for all compact}\,\,
K\subset\Omega\,\,\mbox{and all}\,\,m\in\mathbb{N}\,\\
 & p_{K,m}(f):=\underset{\alpha\in\mathbb{N}_{0}^{N}}{\sup}
\underset{x\in K}{\sup}|f^{(\alpha)}(x)|
e^{\left(-m\varphi^{*}\left(\frac{|\alpha|}{m}\right)\right)}<\infty\Big\}.
\end{align*}

The topology of $\E_{\{\omega\}}(\Omega)$ is given by first taking the inductive
limit over all $m\in\mathbb{N}$ for each compact $K\subset\Omega$
and then taking the projective limit for $K\subset\Omega$, while
$\E_{(\omega)}(\Omega)$ carries the metric locally convex topology given by the
seminorms $p_{K,m}$ where $K$ is a compact subset of $\Omega$ and
$m\in\mathbb{N}$.

The elements of $\E_{\{\omega\}}(\Omega)$ are called 
\emph{$\omega-$ultradifferentiable
  functions of Roumieu type}, while the elements of $\E_{(\omega)}(\Omega)$
are called
\emph{$\omega-$ultradifferentiable functions of Beurling type}.
\end{em}
\end{Def}

\noindent{\bf Notation.}
We shall write $\E_*$ (resp. $\D_*$) if a statements holds for both
$\E_{\{\omega\}}$ (resp. $\D_{\{\omega\}})$ and $\E_{(\omega)}$ 
(resp. $\D_{(\omega)}$), taking $\omega\in\W$
in the Roumieu case and $\omega\in\W'$ in the Beurling case.

\begin{Ex}
\begin{em}
For $\omega$ as in (\ref{eq:(1)-1}) the space $\E_{\{\omega\}}(\Omega)$ is
the classical Gevrey class of order ${\displaystyle \frac{1}{\alpha}.}$
For $\omega$ as in \eqref{212} with $\beta=1$, the
space $\E_{(\omega)}(\Omega)$ is the space $\E(\Omega)$ of $\mathcal C^\infty$
functions in $\Omega$.
\end{em}
\end{Ex}

\begin{Rem}
  \begin{em}
    In general the spaces of $\omega$-ultradifferentiable functions
    defined as in Definition~\ref{def227} are different from the
    Denjoy-Carleman classes of ultradifferentiable functions as defined
    in \cite{K} (cf. \cite{BMM}).
      \end{em}
  \end{Rem}

As in \cite{BMT}, we have the following properties of the spaces
$\E_*(\Omega)$:
\begin{Prop}
$\E_*(\Omega)$ is a locally convex algebra with continuous moltiplication.
\end{Prop}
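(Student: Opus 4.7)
The plan is to reduce the product estimate to Leibniz' rule and then absorb the combinatorial factors into the weight via the convexity properties of $\varphi^*$ proved earlier in Section~2. Since the Beurling and Roumieu cases only differ in the quantifier structure of the seminorms, I would do the Beurling case in detail and remark that the Roumieu case is identical up to reversing ``for all $m$'' into ``there exists $m$''.

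Fix $f,g\in\E_{(\omega)}(\Omega)$, a compact $K\subset\Omega$, and $m\in\N$. By Leibniz,
\[
(fg)^{(\alpha)}(x)=\sum_{\beta\le\alpha}\binom{\alpha}{\beta}f^{(\beta)}(x)g^{(\alpha-\beta)}(x),
\]
so, applying the defining estimate of $p_{K,\ell}(\cdot)$ for an auxiliary parameter $\ell\ge m$ (to be chosen),
\[
|(fg)^{(\alpha)}(x)|\le 2^{|\alpha|}\,p_{K,\ell}(f)\,p_{K,\ell}(g)\,\exp\!\bigl(\ell\varphi^{*}(|\beta|/\ell)+\ell\varphi^{*}(|\alpha-\beta|/\ell)\bigr).
\]
Because $\varphi^{*}$ is convex with $\varphi^{*}(0)=0$ it is superadditive, $\varphi^{*}(s)+\varphi^{*}(t)\le\varphi^{*}(s+t)$, so the combined exponent is bounded by $\ell\varphi^{*}(|\alpha|/\ell)$. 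Moreover the lemma that $\varphi^{*}(y)/y$ is increasing gives $\ell\varphi^{*}(s/\ell)\le m\varphi^{*}(s/m)$ for $\ell\ge m$; in fact, applying superadditivity more carefully with $\ell=2m$ yields $2m\varphi^{*}(|\alpha|/(2m))\le m\varphi^{*}(|\alpha|/m)$, leaving a nonnegative ``slack'' that can be used to accommodate the factor $2^{|\alpha|}=e^{|\alpha|\log 2}$.

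The step of absorbing $2^{|\alpha|}$ is the one analytic point of substance. One applies the inequality $\varphi^{*}(y)-y\ge L\varphi^{*}(y/L)-L$ from Section~2 (or simply reiterates the superadditivity argument): for every $m$ there exist $m'=m'(m)\in\N$ and $C=C(m,K)>0$ with
\[
m'\varphi^{*}(|\beta|/m')+m'\varphi^{*}(|\alpha-\beta|/m')+|\alpha|\log 2\le m\varphi^{*}(|\alpha|/m)+C,\qquad\forall\alpha,\beta\in\N_{0}^{N},\beta\le\alpha.
\]
Combining this with the bound above and taking the supremum in $\alpha$ and $x\in K$ gives
\[
p_{K,m}(fg)\le e^{C}\,p_{K,m'}(f)\,p_{K,m'}(g),
\]
which shows simultaneously that $fg\in\E_{(\omega)}(\Omega)$ and that multiplication is continuous for the metric locally convex topology of $\E_{(\omega)}(\Omega)$.

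For the Roumieu case one starts from the assumption that some $m$ realises finiteness of the Roumieu seminorm for both $f$ and $g$ on $K$, and produces an $m'$ depending on $m$ such that the analogous estimate holds; continuity then follows from the definition of the (PLB)-topology of $\E_{\{\omega\}}(\Omega)$. Bilinearity of multiplication is obvious, so joint continuity at $(0,0)$ together with bilinearity gives continuity everywhere. The main (and only) obstacle is the absorption of the Leibniz factor $2^{|\alpha|}$ into the weight $e^{m\varphi^{*}(|\alpha|/m)}$, and this is precisely the content of the convexity/subadditivity estimates for $\varphi^{*}$ recorded earlier, so no new ingredient beyond Section~2 is needed.
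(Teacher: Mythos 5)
Your argument is correct and is essentially the standard one: the paper gives no proof of its own here but defers to \cite{BMT}, whose proof is exactly your combination of the Leibniz rule, the superadditivity of $\varphi^{*}$ (convexity plus $\varphi^{*}(0)=0$), and the inequality $\varphi^{*}(y)-y\geq L\varphi^{*}(y/L)-L$ to absorb the factor $2^{|\alpha|}$ at the cost of passing from $m$ to $m'\approx Lm$. The only cosmetic issue is the dangling $\beta$ in your intermediate display (it should carry a maximum over $\beta\le\alpha$, which superadditivity then removes); the quantifier bookkeeping in both the Beurling and Roumieu cases is handled correctly.
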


\begin{Lemma}
Let $\Omega\subset\R^N$ be open, let 
$K_{1}\subset\mathring{K_{2}}\subset K_{2}\subset\ldots\subset\Omega$
be an exhaustion of $\Omega$ by compact sets, choose $\chi_{j}\in\D_*(K_{j})$
with $0\leq\chi_{j}\leq1$ and $\left.\chi\right|_{K_{j-1}}\equiv1$.
We thus have maps
\begin{align*}
\D_*(K_{j+1}) & \rightarrow\D_*(K_{j})\\
f & \mapsto\chi_{j}f
\end{align*}
by which
\[
\E_*(\Omega)=\underset{j\rightarrow\infty}{\projlim}\D_*(K_{j}).
\]
\end{Lemma}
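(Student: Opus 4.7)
The plan is to exhibit mutually inverse continuous linear maps
$\Phi\colon \E_*(\Omega)\to\projlim_j\D_*(K_j)$ and
$\Psi\colon \projlim_j\D_*(K_j)\to\E_*(\Omega)$, thereby identifying the
two spaces topologically. For $f\in\E_*(\Omega)$ I set
$\Phi(f):=(\chi_j f)_{j\in\N}$. Since $\chi_j\in\D_*(K_j)\subset\E_*(\Omega)$
and $\E_*(\Omega)$ is an algebra with continuous multiplication (by the
preceding proposition), $\chi_j f\in\E_*(\Omega)$, and the support
inclusion $\supp(\chi_j f)\subset K_j$ places it in $\D_*(K_j)$.
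Compatibility with the connecting maps of the projective system is
immediate: $\chi_{j+1}\equiv 1$ on $K_j\supset\supp\chi_j$, whence
\[
\chi_j(\chi_{j+1}f)=(\chi_j\chi_{j+1})f=\chi_j f.
\]

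To define $\Psi$, take a compatible sequence $(f_j)$ with
$f_j=\chi_j f_{j+1}$. The condition $\chi_j\equiv 1$ on $K_{j-1}$ forces
$f_j=f_{j+1}$ on $K_{j-1}$, and iterating gives $f_j=f_{j'}$ on $K_{j-1}$
for every $j'\geq j$. Since $\Omega=\bigcup_{j}\mathring{K}_j$, I may
consistently set $f(x):=f_j(x)$ for any $j\geq 2$ with
$x\in\mathring{K}_{j-1}$; this defines a $\mathcal C^\infty$ function on
$\Omega$. Given any compact $L\subset\Omega$ I pick $j$ with
$L\subset\mathring{K}_{j-1}$, so $f|_L=f_j|_L$ with
$f_j\in\D_*(K_j)\subset\E_*(\Omega)$; all the seminorms of $f$ on $L$ are
therefore inherited from those of $f_j$, and $f\in\E_*(\Omega)$. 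Set
$\Psi((f_j)):=f$.

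The two maps are mutual inverses: $(\Phi(f))_j=\chi_j f$ agrees with $f$ on
$\mathring{K}_{j-1}$, so reconstructing via $\Psi$ recovers $f$;
conversely, writing $f=\Psi((f_j))$, on $\mathring{K}_j$ we have
$\chi_j f=\chi_j f_{j+1}=f_j$ by compatibility, while both sides vanish
outside $K_j$, so $\chi_j f=f_j$ identically. Continuity of $\Phi$
reduces to continuity of each component $f\mapsto\chi_j f$ from
$\E_*(\Omega)$ into $\D_*(K_j)$: the algebra property gives continuity
into $\E_*(\Omega)$, and the subspace topology on functions supported in
$K_j$ coincides with the intrinsic topology of $\D_*(K_j)$ (in the
Beurling case the seminorms $p_{K_j,m}$ are equivalent to the
$L^1$-norms $\|\cdot\|_\lambda$ of the Fourier transform, by
Paley--Wiener, Theorem~\ref{prop:2.3.6}; the Roumieu case is analogous
via Theorem~\ref{prop:2.3.5}). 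Continuity of $\Psi$ is then automatic
from the universal property of $\projlim$, since the restriction of
$\Psi((f_j))$ to any compact $L$ only depends on a single component.

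The main technical obstacle is the topological bookkeeping in the
Roumieu case, where the outer projective limit in $j$ must be
reconciled with the inner inductive limit over the Banach stages
$\D_{1/m}(K_j)$, and one has to check that the Leibniz estimates
transfer correctly across the inductive structure. The algebraic
construction itself is essentially the classical identification
$\E(\Omega)=\projlim_j\D(K_j)$ adapted to the $\omega$-weighted setting.
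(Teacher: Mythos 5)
Your proposal is correct and is essentially the standard argument (the paper itself gives no proof, quoting the result from \cite{BMT}, and the proof there proceeds exactly via the cutoff maps $f\mapsto\chi_jf$ and the gluing inverse you describe). The only point you invoke rather than establish — that multiplication by $\chi_j$ is continuous from $\E_*(\Omega)$ into $\D_*(K_j)$ with its \emph{intrinsic} topology, equivalently that $\D_*(K_j)$ carries the topology induced by $\E_*(\Omega)$ — is precisely the content of the lemma the paper states immediately afterwards, so flagging it (especially in the Roumieu case, where the inner inductive limit makes it the genuinely delicate step) rather than proving it is consistent with the paper's own level of detail.
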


\begin{Lemma}
Let $K$ be a compact subset of an open set $\Omega\subset\R^N$. Then
$\D_*(K)$ carries the topology which is induced by $\E_*(\Omega)$.
\end{Lemma}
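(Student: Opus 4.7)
The plan is to verify equivalence of the two families of seminorms controlling the two topologies in question: the $L^1$-norms $\|f\|_\lambda=\int_{\R^N}|\hat f(t)|e^{\lambda\omega(t)}dt$ defining the topology of $\D_*(K)$, and the sup-seminorms $p_{K',m}$ that define the topology of $\E_*(\Omega)$. For $f$ supported in $K$ one trivially has $p_{K',m}(f)\leq p_{K,m}(f)$ for every compact $K'\subset\Omega$, so the induced topology on $\D_*(K)$ is generated by the subfamily $\{p_{K,m}\}_{m\in\N}$. The content of the lemma is then that these seminorms generate the same topology as the $\{\|\cdot\|_\lambda\}_\lambda$, and this is exactly what Lemmas~\ref{3.3A} and \ref{3.3 B} set up, once condition $(\gamma)$ (resp.\ $(\gamma)'$) is used to control the integrability of $|\hat f(t)|e^{\lambda\omega(t)}$.

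The first direction is direct: by the first implication of Lemma~\ref{3.3A} with $B=1/m$ (Roumieu) or of Lemma~\ref{3.3 B} with $B=m$ (Beurling),
\[
p_{K,m}(f)\leq\frac{1}{(2\pi)^N}\|f\|_{1/m}\quad\text{(Roumieu)},\qquad p_{K,m}(f)\leq\frac{1}{(2\pi)^N}\|f\|_{m}\quad\text{(Beurling)}.
\]
Hence each Banach step $\D_\lambda(K)$ of the limit defining $\D_*(K)$ embeds continuously into the corresponding step of the $\E_*$-system restricted to $\D_*(K)$, so the identity map $\D_*(K)\to(\D_*(K),\text{induced topology})$ is continuous.

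For the reverse direction I would apply the second implication of Lemma~\ref{3.3A}/\ref{3.3 B} at $z=t\in\R^N$, where $H_K(\Im z)=0$, to obtain
\[
|\hat f(t)|\leq C\,p_{K,m}(f)\,e^{-\eta_m\omega(t)},\qquad \eta_m=\tfrac{1}{mL}\text{ (Roumieu)},\quad \eta_m=\tfrac{m}{L}-\tfrac{1}{b}\text{ (Beurling)}.
\]
Multiplying by $e^{\lambda\omega(t)}$ and integrating,
\[
\|f\|_\lambda\leq C\,p_{K,m}(f)\int_{\R^N}e^{(\lambda-\eta_m)\omega(t)}dt,
\]
and by $(\gamma)$ or $(\gamma)'$ the integrand is majorized by $(1+|t|)^{b(\lambda-\eta_m)}$ once $\lambda-\eta_m<0$; hence the integral converges as soon as $b(\eta_m-\lambda)>N$. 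In the Beurling case this is achieved, for any fixed $\lambda$, by taking $m$ large, yielding $\|f\|_\lambda\leq C_{\lambda,m}\,p_{K,m}(f)$, which with the first step identifies the projective topology of $\D_{(\omega)}(K)$ with the one induced by $\E_{(\omega)}(\Omega)$. In the Roumieu case, for any $f$ with $p_{K,m}(f)<\infty$ I pick $\lambda=\lambda(m)>0$ small enough that the integral converges; then $f\in\D_\lambda(K)$ and the $m$-th Banach step of the $\E_{\{\omega\}}$-system on $\D_*(K)$ embeds continuously into the $\lambda(m)$-th Banach step of the $\D_{\{\omega\}}$-inductive limit.

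The main technical obstacle is the Roumieu case, where I need to verify that the correspondence $m\leftrightarrow\lambda$ between the two inductive limits is cofinal and continuous in both directions, with constants depending only on $\omega$, $N$ and $K$, so that the two inductive limit topologies really coincide as locally convex topologies and not merely admit continuous identity maps at the level of vector spaces; the Beurling case is more transparent because it reduces to comparing two countable projective families of seminorms that turn out to be mutually dominating.
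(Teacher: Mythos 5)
Your Beurling-case argument is correct and is the natural one: on $\D_{(\omega)}(K)$ the induced topology is generated by the countable family $\{p_{K,m}\}_m$, and the two implications of Lemma~\ref{3.3 B} (the second one evaluated at $z=t\in\R^N$, where $H_K(0)=0$, together with $(\gamma)'$ to make $\int e^{(\lambda+\frac1b-\frac mL)\omega(t)}dt$ converge for $m$ large) show that $\{p_{K,m}\}_m$ and $\{\|\cdot\|_\lambda\}_\lambda$ are mutually dominating families of seminorms on the same Fr\'echet space. Note that the paper itself does not prove this lemma (it is quoted from Braun--Meise--Taylor); the tool it records for this purpose is the preceding lemma, which realizes $\E_*(\Omega)$ as $\projlim_j\D_*(K_j)$ via multiplication by cut-off functions $\chi_j$.

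In the Roumieu case there is a genuine gap, and it is not quite the one you flag. Your estimates do establish the cofinality you worry about: $p_{K,m}\leq(2\pi)^{-N}\|\cdot\|_{1/m}$ and $\|\cdot\|_\lambda\leq C\,p_{K,m}$ for $\lambda<\frac1{mL}$ show that the inductive system $\bigl(\D_\lambda(K)\bigr)_{\lambda\to0}$ and the system of Banach steps defined by the norms $p_{K,m}$ are mutually cofinally embedded, so they define the \emph{same inductive limit topology} on $\D_{\{\omega\}}(K)$, and in particular the inclusion $\D_{\{\omega\}}(K)\hookrightarrow\E_{\{\omega\}}(\Omega)$ is continuous. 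What this does not give is the converse inequality of topologies: the topology that $\E_{\{\omega\}}(\Omega)$ \emph{induces} on the subspace $\D_{\{\omega\}}(K)$ is a subspace topology of a projective limit of inductive limits, and a subspace of an inductive limit does not in general carry the inductive limit of the trace topologies --- it may be strictly coarser. No comparison of seminorms on the individual Banach steps can rule this out, because the intrinsic $(DF)$-topology of $\D_{\{\omega\}}(K)$ is not described by a family of continuous seminorms in that naive way. The missing ingredient is functional-analytic: choose $j_0$ with $K\subset K_{j_0-1}$ and use the continuous map $\E_{\{\omega\}}(\Omega)\to\D_{\{\omega\}}(K_{j_0})$, $f\mapsto\chi_{j_0}f$, which restricts to the identity on $\D_{\{\omega\}}(K)$; this reduces the claim to showing that $\D_{\{\omega\}}(K)$ is a topological subspace of $\D_{\{\omega\}}(K_{j_0})$, which holds because $\D_\lambda(K)$ is closed in $\D_\lambda(K_{j_0})$ for each $\lambda$ and these are (DFS)- (indeed (DFN)-) inductive limits, for which closed subspaces of the steps induce the subspace topology on the limit. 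Without this step (or an equivalent one) the Roumieu half of the lemma is not proved.
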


\begin{Prop}
  The following properties hold:
  
  (1) The inclusion $\D_*(\Omega)\hookrightarrow\E_*(\Omega)$ is continuous
and has dense image.

(2) Let $\omega,\,\sigma\in\W$ with $\sigma=o(\omega)$, then the
inclusion $\E_{\{\omega\}}(\Omega)\hookrightarrow{\E}_{(\sigma)}(\Omega)$
is continuous and has dense image.
\end{Prop}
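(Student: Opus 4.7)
The statement has two parts handled by different techniques: part (1) is a cut-off argument in $\E_*(\Omega)$ itself, while part (2) requires a comparison of Young conjugates for the continuity and a density transfer via Proposition~\ref{prop:3.9} for the density.

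For part (1), continuity of $\D_*(\Omega)\hookrightarrow\E_*(\Omega)$ is immediate from the preceding lemma, which asserts that for each compact $K\subset\Omega$ the topology of $\D_*(K)$ is the one induced from $\E_*(\Omega)$; then continuity on the inductive limit follows from the universal property. For density I exhaust $\Omega$ by compact convex sets $K_j\subset\mathring{K_{j+1}}$ and choose cutoffs $\chi_j\in\D_*(K_{j+1})$ with $\chi_j\equiv1$ on a neighborhood of $K_j$, which exist by Proposition~\ref{2.6} in the Beurling case and by its Roumieu analogue. For $f\in\E_*(\Omega)$ I set $f_j:=\chi_j f$; this lies in $\D_*(K_{j+1})\subset\D_*(\Omega)$ because $\E_*$ is an algebra and compactly supported elements of $\E_*(\Omega)$ lie in $\D_*$. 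Given any compact $K\subset\Omega$ I pick $j_0$ with $K\subset\mathring{K_{j_0}}$; for $j\geq j_0$ the function $\chi_j$ is identically $1$ on a neighborhood of $K$, so $(f_j-f)^{(\alpha)}\equiv0$ on $K$ for every $\alpha$, which kills every defining seminorm of $\E_*(\Omega)$ on $f_j-f$ and yields $f_j\to f$ in $\E_*(\Omega)$.

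For continuity in part (2), I reduce the comparison of seminorms to a Young-conjugate inequality. Writing $\psi(t):=\sigma(e^t)$ and $\varphi(t):=\omega(e^t)$, the hypothesis $\sigma=o(\omega)$ gives, for every $\varepsilon>0$, a constant $C_\varepsilon>0$ with $\psi(t)\leq\varepsilon\varphi(t)+C_\varepsilon$. Passing to Young conjugates yields
\[
\psi^*(y)\geq\varepsilon\,\varphi^*(y/\varepsilon)-C_\varepsilon,\qquad y\geq0.
\]
For a Beurling seminorm $p_{K,m}^\sigma$ of $\E_{(\sigma)}(\Omega)$ and any $k\in\N$ the choice $\varepsilon:=1/(mk)$ gives $m\psi^*(|\alpha|/m)\geq\frac{1}{k}\varphi^*(k|\alpha|)-C_{m,k}$, hence
\[
p_{K,m}^\sigma(f)\leq e^{C_{m,k}}\sup_{\alpha,\,x\in K}|f^{(\alpha)}(x)|\,e^{-\frac{1}{k}\varphi^*(k|\alpha|)}.
\]
For $f\in\E_{\{\omega\}}(\Omega)$ the right-hand supremum is finite for some $k=k(K,f)$, which proves simultaneously the set-theoretic inclusion and its continuity with respect to the inductive--projective topology on $\E_{\{\omega\}}(\Omega)$.

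For density in part (2), I combine three already-available facts: by part (1) applied with $*=(\sigma)$, the space $\D_{(\sigma)}(\Omega)$ is dense in $\E_{(\sigma)}(\Omega)$; by Proposition~\ref{prop:3.9}, $\D_{\{\omega\}}(\Omega)\hookrightarrow\D_{(\sigma)}(\Omega)$ is sequentially dense; and the inclusion $\D_{(\sigma)}(\Omega)\hookrightarrow\E_{(\sigma)}(\Omega)$ is continuous. Given $f\in\E_{(\sigma)}(\Omega)$ I first construct $f_j:=\chi_j f\to f$ in $\E_{(\sigma)}(\Omega)$ with $f_j\in\D_{(\sigma)}(\Omega)$, and then approximate each $f_j$ by a sequence $g_{j,n}\in\D_{\{\omega\}}(\Omega)\subset\E_{\{\omega\}}(\Omega)$ converging to $f_j$ in $\D_{(\sigma)}(\Omega)$, hence in $\E_{(\sigma)}(\Omega)$; a diagonal extraction $g_{j,n(j)}\to f$ completes the proof. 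The main obstacle I expect is exactly this density transfer: a single cutoff of an $\E_{(\sigma)}$-function only lands in $\D_{(\sigma)}$, not in the smaller Roumieu class $\D_{\{\omega\}}$, so one cannot reach the Roumieu world by a single multiplication; the diagonal argument built on Proposition~\ref{prop:3.9} is the cleanest workaround, provided one verifies that the relevant topologies are sequentially adequate (nuclear Fr\'echet in the Beurling case, (DFN) in the Roumieu case) for the extracted sequence to genuinely converge.
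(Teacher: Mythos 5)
Your argument is correct and is essentially the standard one: the paper states this proposition without proof, deferring to Braun--Meise--Taylor, and your reconstruction (cut-off functions plus the fact that $\D_*(K)$ carries the induced topology for part (1); the Young-conjugate inequality $\psi^*(y)\geq\varepsilon\varphi^*(y/\varepsilon)-C_\varepsilon$ for continuity in part (2), and the chain $\D_{\{\omega\}}(\Omega)\hookrightarrow\D_{(\sigma)}(\Omega)\hookrightarrow\E_{(\sigma)}(\Omega)$ of dense continuous inclusions with a diagonal extraction in the Fr\'echet space $\E_{(\sigma)}(\Omega)$ for density) is exactly the intended route. The only cosmetic slip is the aside about the Roumieu (DFN) topology in the diagonal step: the target there is always the Fr\'echet space $\E_{(\sigma)}(\Omega)$, so metrizability is all that is needed.
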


\begin{Prop}
  Let $\Omega\subset\R^N$ be open and let
  $\left\{\Omega_{j}\right\}_{j\in\N}$ be an open covering of $\Omega$.
Then there are $f_{j}\in\D_*(\Omega_{j})$ with $0\leq f_{j}\leq1$
such that $\sum_{j=1}^{\infty}f_{j}=1$ and $\left\{\supp f_{j}\right\}_{j\in\N}$
is locally finite.
\end{Prop}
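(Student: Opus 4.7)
The plan is to reduce the problem to constructing suitable cutoff functions in $\D_{*}$ and then assembling them via a classical telescoping product. First, I would invoke paracompactness of $\Omega$ to pass to a locally finite open refinement $\{V_{k}\}_{k\in\N}$ of $\{\Omega_{j}\}$, with each $\overline{V_{k}}$ compact and contained in some $\Omega_{j(k)}$, together with a further shrinking $\{U_{k}\}$ satisfying $\overline{U_{k}}\subset V_{k}$ and still covering $\Omega$.

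Next, for each $k$ I would construct a plateau function $\chi_{k}\in\D_{*}(V_{k})$ with $0\leq\chi_{k}\leq 1$ and $\chi_{k}\equiv 1$ on $\overline{U_{k}}$. Proposition~\ref{2.6} (and its Roumieu counterpart) yields a nontrivial nonnegative $\phi_{k}\in\D_{*}(\R^{N})$ supported in a ball of arbitrarily small radius $\varepsilon_{k}$; nonnegativity is arranged by self-convolution, and I normalize so that $\int\phi_{k}=1$. Choosing $\varepsilon_{k}$ smaller than $\tfrac{1}{2}\mathrm{dist}(\overline{U_{k}},V_{k}^{c})$ and an open set $W_{k}$ with $\overline{U_{k}}\subset W_{k}\subset\overline{W_{k}}\subset V_{k}$ at distance at least $\varepsilon_{k}$ from $\overline{U_{k}}$ and from $V_{k}^{c}$, I set $\chi_{k}:=\mathbf{1}_{W_{k}}\ast\phi_{k}$. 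The identity $\chi_{k}^{(\alpha)}=\mathbf{1}_{W_{k}}\ast\phi_{k}^{(\alpha)}$ gives
\[
\|\chi_{k}^{(\alpha)}\|_{\infty}\leq|W_{k}|\,\|\phi_{k}^{(\alpha)}\|_{\infty},
\]
so the $\E_{*}$-estimates on $\phi_{k}$ (via the Paley--Wiener characterisations, Theorems~\ref{prop:2.3.5}--\ref{prop:2.3.6}) transfer to $\chi_{k}$; together with $\chi_{k}\equiv 1$ on $\overline{U_{k}}$, $\supp\chi_{k}\subset V_{k}$, and $0\leq\chi_{k}\leq 1$, this gives $\chi_{k}\in\D_{*}(V_{k})$.

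I would then assemble the partition by the telescoping formula $f_{k}:=\chi_{k}\prod_{l<k}(1-\chi_{l})$, which satisfies $\sum_{k=1}^{K}f_{k}=1-\prod_{k=1}^{K}(1-\chi_{k})$. Since every $x\in\Omega$ lies in some $\overline{U_{k_{0}}}$ with $\chi_{k_{0}}(x)=1$, the product vanishes for $K\geq k_{0}$, and local finiteness of $\{V_{k}\}$ makes $\sum_{k}f_{k}$ locally a finite sum. Because $\E_{*}$ is a locally convex algebra (stated just above in the excerpt), each $f_{k}$ lies in $\D_{*}(V_{k})$; an easy induction gives $0\leq f_{k}\leq 1$; and $\sum_{k}f_{k}=1$ on $\Omega$. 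Finally I group by the original cover, setting $\tilde f_{j}:=\sum_{k:\,j(k)=j}f_{k}$ (or $0$ if no $k$ maps to $j$): this gives the required family, with $\supp\tilde f_{j}\subset\Omega_{j}$, locally finite, and $\sum_{j}\tilde f_{j}=1$.

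The main obstacle is the cutoff construction: Proposition~\ref{2.6} alone produces only small-support $\D_{*}$-bumps, not plateau functions equal to $1$ on a prescribed compact set. The device $\mathbf{1}_{W_{k}}\ast\phi_{k}$ manufactures such plateaux while preserving $\D_{*}$-regularity, because all derivatives fall on the smooth factor $\phi_{k}$ and are estimated directly from its $\E_{*}$-bounds. The construction runs identically in the Beurling case $\omega\in\W'$ and the Roumieu case $\omega\in\W$, so the conclusion holds uniformly for $\D_{*}$.
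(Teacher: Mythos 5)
The paper gives no proof of this proposition (it is carried over from Braun--Meise--Taylor), and your skeleton --- countable locally finite refinement, plateau functions $\chi_k=\mathbf{1}_{W_k}\ast\phi_k$, telescoping products $f_k=\chi_k\prod_{l<k}(1-\chi_l)$ --- is exactly the standard argument behind it. But one ingredient is wrong as stated: self-convolution does \emph{not} produce nonnegative functions. For a real-valued $H$ taking both signs, $H\ast H$ is in general not $\geq 0$; and $H\ast\tilde H$ with $\tilde H(x)=\overline{H(-x)}$ has nonnegative Fourier transform $|\hat H|^2$, i.e.\ it is positive \emph{definite}, not pointwise nonnegative. Since $\phi_k\geq 0$ with $\int\phi_k=1$ is precisely what forces $0\leq\chi_k\leq1$, this must be repaired: take instead the pointwise product $|H|^2=H\overline{H}$, which is nonnegative, supported in $\supp H$, and still in $\D_*$ because $\widehat{|H|^2}=\hat H\ast\widehat{\overline{H}}$ and, by Lemma~\ref{lemma12BMT}, $\int|\widehat{|H|^2}(t)|e^{\lambda\omega(t)}\,dt\leq e^{\lambda K}\bigl(\int|\hat H(t)|e^{\lambda K\omega(t)}\,dt\bigr)^2$. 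With this fix the cutoff step is sound (indeed $\chi_k\in\D_*$ follows most directly from $\widehat{\chi_k}=\widehat{\mathbf{1}_{W_k}}\,\widehat{\phi_k}$ and $|\widehat{\mathbf{1}_{W_k}}|\leq|W_k|$, which bounds $\|\chi_k\|_\lambda$ by $|W_k|\,\|\phi_k\|_\lambda$), and the telescoping argument is correct.

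The second problem is the final regrouping $\tilde f_j=\sum_{k:\,j(k)=j}f_k$: elements of $\D_*(\Omega_j)$ have \emph{compact} support in $\Omega_j$, and a locally finite but infinite sum of the $f_k$ need not. No argument can rescue this, because the statement with the literal indexing is false for general covers: take $\Omega=\R^N$, $\Omega_1=\R^N$ and $\Omega_j=B(0,1)$ for $j\geq2$; then each $f_j$ with $j\geq2$ vanishes on $\{|x|\geq1\}$, forcing $f_1\equiv1$ there, so $f_1$ cannot have compact support. What the proposition is actually meant to assert (and what is used in the sequel) is the existence of a locally finite $\D_*$-partition of unity subordinate to the cover, i.e.\ precisely the family $\{f_k\}$ you construct before grouping, with each $\supp f_k\subset V_k$ compact and contained in some $\Omega_{j(k)}$. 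So drop the grouping step and present $\{f_k\}$ itself as the answer.
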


\begin{Prop}
Let $\Omega_{1}$, $\Omega_{2}$ be given open subsets of $\R^N$, let 
$g:\Omega_{1}\rightarrow\Omega_{2}$
be real-analytic, and let $f\in\E_*(\Omega_{2})$. Then 
$f\circ g\in\E_*(\Omega_{1})$.
In particular, $\E_*(\Omega)$ contains all real-analytic functions
on $\Omega$.
\end{Prop}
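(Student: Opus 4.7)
The plan is to work locally and then apply the multivariate Faà di Bruno formula, combined with the growth estimates for $f$ in the defining seminorms of $\E_*$ and with Cauchy-type estimates for the real-analytic map $g$. First I would fix an arbitrary compact $K_1\subset\Omega_1$ and choose a compact neighborhood $K_2\subset\Omega_2$ with $g(K_1)\subset\mathring{K_2}$. Real-analyticity of $g$ means each component extends holomorphically to a complex neighborhood of $K_1$, so Cauchy estimates yield constants $A,R>0$ such that
\[
|D^\beta g_j(x)|\leq A\, R^{|\beta|}|\beta|!,\qquad x\in K_1,\ \beta\in\N_0^N,\ 1\leq j\leq N.
\]
For $f\in\E_*(\Omega_2)$, we have the corresponding estimate $|D^\gamma f(y)|\leq C_m e^{m\varphi^*(|\gamma|/m)}$ for $y\in K_2$ (for all $m\in\N$ in the Beurling case, for some $m\in\N$ in the Roumieu case).

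Applying Faà di Bruno's formula, $D^\alpha(f\circ g)(x)$ is a finite sum of terms of the form
\[
(D^\gamma f)(g(x))\prod_{s} \big(D^{\beta_s}g_{j_s}(x)\big),
\]
where $|\gamma|=k$ equals the number of $g$-factors and $\sum_s|\beta_s|=|\alpha|$, each $|\beta_s|\geq 1$, so in particular $k\leq|\alpha|$. Using the bounds above, each such term is controlled by $C_m A^k R^{|\alpha|}e^{m\varphi^*(k/m)}\prod_s|\beta_s|!$, and a standard combinatorial count of the partitions produces at most $C'^{|\alpha|}|\alpha|!$ such terms in total. The crux is then to bound $|\alpha|!$ and $e^{m\varphi^*(k/m)}$ jointly by an expression of the form $\tilde C_{m'}e^{m'\varphi^*(|\alpha|/m')}$ (resp.\ $\tilde C e^{\frac{1}{m'}\varphi^*(m'|\alpha|)}$ for Roumieu), with $m'$ arbitrary in the Beurling case. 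For this I would use three tools already available: condition $(\gamma)'$ on $\omega$, which forces $\varphi^*(y)=+\infty$ (or $\varphi^*(y)/y\to\infty$ in the $(\gamma)$ case) outside a fixed bounded set and in particular lets one absorb $|\alpha|!$ and the geometric factor $R^{|\alpha|}$; the convexity and monotonicity of $\varphi^*/\mathrm{id}$ (Lemma~2.5) which gives $k\varphi^*(|\alpha|/k) \geq |\alpha|\varphi^*(1)\geq 0$ and allows moving $k$ inside the argument; and finally condition $(\alpha)$, used to replace $\varphi^*(k/m)$ by $\varphi^*(|\alpha|/m')$ after adjusting $m'$ in a constant-dependent way.

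The main obstacle is precisely this last step, the arithmetic of matching indices in $\varphi^*$: one must show that, in the Beurling case, for every prescribed $m'\in\N$ there exists a choice $m=m(m')$ (taken large enough, so that $C_m$ is fixed once $m'$ is fixed) for which the combined estimate $R^{|\alpha|}|\alpha|!\,e^{m\varphi^*(k/m)}\leq \tilde C_{m'}e^{m'\varphi^*(|\alpha|/m')}$ holds uniformly in $\alpha$ and in the partitions. The Roumieu case is easier in spirit since we only need one $m$. Once these Faà di Bruno estimates close, they give $p_{K_1,m'}(f\circ g)<\infty$ for every $m'$ (resp.\ for some $m'$), hence $f\circ g\in\E_*(\Omega_1)$. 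For the final sentence, I would apply the statement to $f(y)=y_j$ (which lies in $\E_*(\R^N)$ since its derivatives vanish from order two on, so the defining bounds are trivial) and to $g$ itself: this shows each component of a real-analytic map belongs to $\E_*$; more directly, a real-analytic $f$ on $\Omega$ satisfies $|f^{(\alpha)}|\leq C R^{|\alpha|}|\alpha|!$ on compacts, and by $(\gamma)'$ together with $\varphi^*(0)=0$ this is dominated by $C_m e^{m\varphi^*(|\alpha|/m)}$ (for all $m$ in Beurling, for some $m$ in Roumieu, using $(\gamma)$ in the latter), whence $f\in\E_*(\Omega)$.
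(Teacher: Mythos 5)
The paper gives no proof of this proposition; it simply defers to \cite{BMT} (where it is Proposition 4.5 and its corollary), and the proof there is exactly the Fa\`a di Bruno strategy you outline: localize, use Cauchy estimates for $g$, and absorb the resulting $R^{|\alpha|}|\alpha|!$ together with $e^{m\varphi^{*}(k/m)}$ into a single term $e^{m'\varphi^{*}(|\alpha|/m')}$, the index juggling being handled by the standard consequences of condition $(\alpha)$ (of which Lemma 2.6 of the paper is an instance). So your architecture is the right one.

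There is, however, one genuine error in the mechanism you invoke for the crucial absorption step. You attribute the inequality $R^{|\alpha|}|\alpha|!\leq C_{\lambda}e^{\lambda\varphi^{*}(|\alpha|/\lambda)}$ to condition $(\gamma)'$, claiming that $(\gamma)'$ forces $\varphi^{*}$ to be $+\infty$ outside a bounded set. This is false: $(\gamma)'$ is a \emph{lower} bound on $\omega$, hence an \emph{upper} bound on $\varphi^{*}$ (it gives $\varphi^{*}(y)\leq -a$ for $y\leq b$ and no information beyond), and for Gevrey weights $\omega(t)=t^{1/s}$, which satisfy $(\gamma)'$, the function $\varphi^{*}$ is finite everywhere. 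What $(\gamma)'$ actually controls in this paper is polynomial factors $(1+|\zeta|)^{n}$, not factorials. The absorption of $R^{j}j!$ requires a \emph{lower} bound on $\varphi^{*}$, equivalently an upper bound $\omega(t)=o(t)$, which is exactly what the non-quasianalyticity condition $(\beta)$ supplies: taking $x=\log(Rj)+1$ in $\lambda\varphi^{*}(j/\lambda)=\sup_{x}(xj-\lambda\omega(e^{x}))$ gives $\lambda\varphi^{*}(j/\lambda)\geq j\log j+j\log R-C_{\lambda}$ precisely because $\lambda\omega(eRj)\leq j+C_{\lambda}$. The same mis-attribution recurs in your final paragraph on the inclusion of real-analytic functions. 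With $(\gamma)'$ replaced by $(\beta)$ (i.e.\ by $\omega(t)=o(t)$) at these points, and with the index-matching step carried out via the $(\alpha)$-based inequalities as you propose, the argument closes correctly in both the Beurling and Roumieu cases.
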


Let us now introduce the $\omega-$ultradistributions of Beurling
and of Roumieu type with compact and arbitrary support, taking
$\omega\in\W$ in the Roumieu case and $\omega\in\W'$ in the Beurling case.

\begin{Def}
\begin{em}
Let $\omega\in\W$
and $\Omega\subset\R^N$ an open set.

(1) The elements of $\D_{\{\omega\}}'(\Omega)$ are called 
\emph{$\omega-$ultradistributions
of Roumieu type}.

(2) For an ultradistribution $T\in\D_{\{\omega\}}'(\Omega)$ its support 
$\supp T$
is the set of all points such that for every neighbourhood $U$ there
is $\varphi\in\D_{\{\omega\}}(U)$ with $<T,\varphi>\not=0$.
\end{em}
\end{Def}

\begin{Def}
\begin{em}
Let $\omega\in\W'$
and $\Omega\subset\R^N$ an open set.

(1) The elements of $\D_{(\omega)}'(\Omega)$ are called 
\emph{$\omega-$ultradistributions
of Beurling type.}

(2) For an ultradistribution $T\in\D_{(\omega)}'(\Omega)$ its support 
$\supp T$
is the set of all points such that for every neighbourhood $U$ there
is $\varphi\in\D_{(\omega)}(U)$ with $<T,\varphi>\not=0$.
\end{em}
\end{Def}

\begin{Rem}
\begin{em}
By Proposition \ref{prop:3.9}, the definition of support of an ultradistribution
$T$ doesn't depend on the choice of the class $\D_{\{\omega\}}(\Omega)$ for
$\omega\in\W$ (resp. $\D_{(\omega)}(\Omega)$ for $\omega\in\W')$ as long
as it contains $T$. In particular, if $T$ is a distribution 
$T\in{\D}'(\Omega)$,
then the support defined above is the usual one.
\end{em}
\end{Rem}

As in \cite[Prop. 5.3]{BMT}, the elements of $\E'_*(\Omega)$ can be identified
with distributions in $\D'_*(\Omega)$ with compact support:
\begin{Prop}
\label{prop:5.3}
An ultradistribution $T\in\D_*'(\Omega)$ can be extended
continuously to $\E_*(\Omega)$ iff $\supp T$ is
a compact subset of $\Omega$.
\end{Prop}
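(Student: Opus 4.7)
The plan is to prove the two directions separately, following the pattern of the classical distribution-theoretic result and its $\omega$-ultradifferentiable analogue in \cite{BMT}.

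For the sufficient direction, assume $K := \supp T$ is a compact subset of $\Omega$. Using the existence of $\D_*$-bump functions (Proposition~\ref{2.6} and its Roumieu counterpart) together with the $\D_*$-partition of unity proposition, I would choose $\chi \in \D_*(\Omega)$ with $\chi \equiv 1$ on a neighborhood $U$ of $K$. Since $\E_*(\Omega)$ is a locally convex algebra and $\chi$ has compact support, $\chi f \in \D_*(\Omega)$ for every $f \in \E_*(\Omega)$, so we may set
\[
\tilde T(f) := \langle T, \chi f \rangle, \qquad f \in \E_*(\Omega).
\]
Three points must then be verified. First, $\tilde T$ is independent of the choice of $\chi$; equivalently $\langle T, g\rangle = 0$ for every $g \in \D_*(\Omega)$ with $\supp g \cap K = \emptyset$. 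This follows by covering the compact set $\supp g$ by finitely many open sets $U_i$ on which $T$ vanishes (by definition of $\supp T$) and writing $g = \sum \psi_i g$ with $\{\psi_i\} \subset \D_*(U_i)$ a partition of unity subordinate to the cover. Second, $\tilde T$ restricts to $T$ on $\D_*(\Omega)$: for $\varphi \in \D_*(\Omega)$ the function $(1-\chi)\varphi$ vanishes on $U \supset K$, so by the first point $\langle T, (1-\chi)\varphi\rangle = 0$ and hence $\tilde T(\varphi) = \langle T, \varphi\rangle$. Third, $\tilde T$ is continuous, because $f \mapsto \chi f$ is continuous from $\E_*(\Omega)$ into $\D_*(\supp\chi)$ (by Leibniz together with the Young-conjugate estimates from Section~2 and the algebra property of $\E_*$) and $T$ is continuous on $\D_*(\supp\chi)$.

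For the necessary direction, assume $T$ admits a continuous extension $\tilde T \in \E_*'(\Omega)$. In the Beurling case $\E_{(\omega)}(\Omega)$ is a Fr\'echet space with the countable family of seminorms $p_{K,m}$ of Definition~\ref{def227}, so continuity of $\tilde T$ produces a compact $K_0 \subset \Omega$, an integer $m$, and a constant $C>0$ with $|\tilde T(f)| \leq C\, p_{K_0,m}(f)$ for all $f \in \E_{(\omega)}(\Omega)$. In the Roumieu case the topology is $\projlim_K \indlim_m$; continuity of $\tilde T$ on the projective limit factors through some $\E_{\{\omega\}}(K_0)$, and then through one step $\E_{\{\omega\},m}(K_0)$ of the inductive limit (as $\E_{\{\omega\}}(K_0)$ is a $(DFS)$-space), yielding an analogous single-seminorm bound. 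In either case, any $\varphi \in \D_*(\Omega)$ with $\supp \varphi \cap K_0 = \emptyset$ satisfies $\varphi^{(\alpha)}|_{K_0} \equiv 0$, so the controlling seminorm vanishes and $\langle T, \varphi\rangle = \tilde T(\varphi) = 0$. Hence $\supp T \subset K_0$, and in particular it is a compact subset of $\Omega$.

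The main obstacle I anticipate is the Roumieu case of the necessity argument: one must carefully unwind that continuity on $\projlim_K \indlim_m$ indeed produces a uniform bound by a single seminorm $p_{K_0,m}$ on a fixed compact $K_0$, which requires invoking the $(DFN)$-structure of the step spaces. A secondary technical point is verifying continuity of multiplication by $\chi$ in the Roumieu sufficiency direction: the Leibniz sum must be absorbed by a suitable modification of the exponential weight $e^{-\frac{1}{m}\varphi^*(m|\alpha|)}$, which ultimately reduces to the convexity and subadditivity-type properties of $\varphi^*$ recorded at the beginning of Section~2.
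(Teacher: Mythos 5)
Your argument is correct and is essentially the standard one: the paper itself gives no proof of this proposition, simply deferring to \cite[Prop.~5.3]{BMT}, and your cutoff/partition-of-unity construction for sufficiency together with the seminorm-localization argument for necessity is exactly the expected route (the key supporting facts — continuity of multiplication, the induced topology on $\D_*(K)$, and the $\D_*$-partitions of unity — are all available in Section~2). The only cosmetic caveat is in the Roumieu necessity step: you do not actually need a single-seminorm bound on the whole inductive limit, only that the controlling continuous seminorm factors through the jet restriction to one compact $K_0$ and vanishes on functions flat on $K_0$, which follows from continuity on each step of the inductive limit.
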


\begin{Def}
\begin{em}
  Let $\Omega\subset\R^N$ be open. For $f\in\E_*(\Omega)$ and
  $T\in\D_*'(\Omega)$
we define $fT\in\D_*'(\Omega)$ by 
\[
\left\langle fT,\varphi\right\rangle =\left\langle T,f\varphi
\right\rangle \,\,\,\,\,\,\,\,\,\,\forall\varphi\in\D_*(\Omega).
\]
This makes $\D_*'(\Omega)$ an $\E_*(\Omega)-$module.
\end{em}
\end{Def}

\begin{Def}
  \begin{em}
For an ultradistribution $\mu\in\E_*'(\R^N)$, and for $f\in\E_*(\R^N)$ we
define the convolution 
\begin{align*}
T_{\mu}(f): & =\mu*f:\R^N\rightarrow\mathbb{C},
\end{align*}
by
\[
\mu*f(x)=\left\langle \mu_{y},f(x-y)\right\rangle .
\]
\end{em}
\end{Def}

As in \cite[Prop. 6.3]{BMT}:
\begin{Prop}
The convolution map
\[
T_{\mu}:\E_*(\R^N)\rightarrow\E_*(\R^N)
\]
is continuous.
  \end{Prop}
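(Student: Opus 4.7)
The plan is to reduce the continuity of $T_\mu$ to an explicit seminorm bound, using the compact support of $\mu$, differentiation under the bracket $\langle\mu_y,\cdot\rangle$, and the convexity of $\varphi^*$. First I verify the identity
\[
(\mu*f)^{(\alpha)}(x) = \langle \mu_y,\, f^{(\alpha)}(x-y)\rangle,\qquad x\in\R^N,\ \alpha\in\N_0^N,
\]
by a standard difference-quotient argument that uses the continuity of $\mu$ on $\E_*(\R^N)$. By Proposition~\ref{prop:5.3}, $\mu$ has compact support $K_0\subset\R^N$, and the continuity of $\mu$ supplies, for a compact neighborhood $\tilde K_0\supset K_0$, an estimate of the form $|\langle\mu,g\rangle|\leq C\,p_{\tilde K_0,m_0}(g)$: for one fixed $m_0$ in the Beurling case (as $\E_{(\omega)}$ is a Fr\'echet space), and for every $m_0\in\N$ (with constants $C_{m_0}$) in the Roumieu case, since $\mu$ restricts continuously to each Banach step of the inductive-limit description of $\E_{\{\omega\}}(\tilde K_0)$.

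Next, apply this bound to $g(y)=f^{(\alpha)}(x-y)$, whose $\beta$-derivative in $y$ equals $(-1)^{|\beta|}f^{(\alpha+\beta)}(x-y)$. For $x$ in an arbitrary compact $K\subset\R^N$ the argument $x-y$ stays in the compact set $K':=K-\tilde K_0$, so
\[
|(\mu*f)^{(\alpha)}(x)|\,\leq\,C\sup_{\beta\in\N_0^N}\sup_{y\in\tilde K_0}|f^{(\alpha+\beta)}(x-y)|\,e^{-m_0\varphi^*(|\beta|/m_0)}.
\]
In the Beurling case I bound $|f^{(\alpha+\beta)}(x-y)|\leq p_{K',\,m'+m_0}(f)\,e^{(m'+m_0)\varphi^*(|\alpha+\beta|/(m'+m_0))}$ and combine the two exponential factors via the convexity inequality
\[
(m'+m_0)\,\varphi^*\!\left(\frac{|\alpha|+|\beta|}{m'+m_0}\right)\,\leq\, m'\,\varphi^*\!\left(\frac{|\alpha|}{m'}\right)+m_0\,\varphi^*\!\left(\frac{|\beta|}{m_0}\right),
\]
which is immediate from the convexity of $\varphi^*$ applied with weight $\lambda=m'/(m'+m_0)$. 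Rearrangement yields $p_{K,m'}(\mu*f)\leq C\,p_{K',\,m'+m_0}(f)$ for every $m'\in\N$, which is the required continuity in the Beurling case.

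For the Roumieu case the dual-form inequality $\frac{1}{M}\varphi^*(M(|\alpha|+|\beta|))\leq \frac{1}{A}\varphi^*(A|\alpha|)+\frac{1}{m_0}\varphi^*(m_0|\beta|)$, valid whenever $1/M=1/A+1/m_0$, plays the same role; the freedom to choose $m_0$ arbitrarily large in the continuity bound on $\mu$ lets one match the growth index $m_1$ describing $f$ on $K'$, producing a bound on a Roumieu seminorm of $\mu*f$ on $K$ by a Roumieu seminorm of $f$ on $K'$, from which the (sequential) continuity of $T_\mu$ follows. The main technical obstacle is isolating the correct convexity inequality to recombine the two exponential weights; once it is in hand, both cases reduce to routine seminorm bookkeeping.
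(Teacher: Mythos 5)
Your argument is correct, and it is the standard proof of this fact: the paper itself offers no argument but simply defers to \cite[Prop.~6.3]{BMT}, whose proof is exactly this combination of differentiation under the bracket, the compact-support seminorm estimate for $\mu$ from Proposition~\ref{prop:5.3}, and the convexity inequality $(\lambda+\mu)\varphi^{*}\bigl(\tfrac{s+t}{\lambda+\mu}\bigr)\leq\lambda\varphi^{*}\bigl(\tfrac{s}{\lambda}\bigr)+\mu\varphi^{*}\bigl(\tfrac{t}{\mu}\bigr)$ in its two normalizations. Both the Beurling estimate $p_{K,m'}(\mu*f)\leq C\,p_{K',m'+m_0}(f)$ and the Roumieu step-matching via $1/M=1/A+1/m_0$ are sound.
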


\noindent{\bf Notation.}
For
$z\in\mathbb{C}^{N}$ we set
\[
f_{z}(x)=e^{-i\left\langle x,z\right\rangle },\,\,\,\,\,\,\,\,\,\,x\in\R^N.
\]

For each $\lambda>0$ we have 
\begin{align}
 \sup_{\alpha\in\N_0^N}\sup_{x\in\R^N}
  |f_{z}^{(\alpha)}(x)|e^{-\lambda\varphi^*\left(\frac{|\alpha|}{\lambda}\right)}
  & =\sup_{\alpha\in\N_0^N}\sup_{x\in\R^N}
  |z^{\alpha}|
  |e^{-i\left\langle x,z\right\rangle }|e^{-\lambda\varphi^*\left(\frac{|\alpha|}{\lambda}\right)}
  \nonumber \\
  & \leq\sup_{x\in\R^N}e^{\langle
      x,\Im z\rangle} \cdot\exp\left\{\sup_{\alpha\in\N_{0}^{N}}
      \left(|\alpha|\log|z|-\lambda\varphi^{*}\left(\frac{|\alpha|}{\lambda}
      \right)\right)\right\}\nonumber \\
  & =e^{H_{K}(\Im z)}\exp\left\{ \sup_{\alpha\in\N_{0}^{N}}\lambda
  \left(\frac{|\alpha|}{\lambda}\log|z|-\varphi^{*}\left(
  \frac{|\alpha|}{\lambda}\right)\right)\right\} \nonumber \\
 & =e^{H_{K}(\Im z)}\exp\left\{ \lambda\varphi^{**}(\log|z|)\right\} \nonumber \\
   & =e^{H_{K}(\Im z)+\lambda\omega(z)}.
  \label{eq:AA}
\end{align}
Thus $f_{z}\in\E_*(\Omega)$ for all $\omega$ and $\Omega$.

\begin{Def}
  \begin{em}
The {\em Fourier-Laplace transform} $\hat{\mu}$ of $\mu\in\E_*'(\Omega)$
is defined by
\[
\hat{\mu}:z\mapsto\left\langle \mu,f_{z}\right\rangle .
\]
  \end{em}
\end{Def}

Note that for $\varphi\in\D_*(\Omega):$
\begin{align*}
\widehat{\mu*\varphi}(z) & =\int_{\R^N}\mu*\varphi(t)f_{z}(t)dt
  =\int_{\R^N}\left\langle \mu_{y},f_{z}(t)\varphi(t-y)\right\rangle dt\\
 & =\int_{\R^N}\left\langle \mu_{y},f_{z}(s+y)\varphi(s)\right\rangle ds
  =\int_{\R^N}\left\langle \mu,f_{z}\right\rangle \varphi(s)f_{z}(s)ds\\
 & =\left\langle \mu,f_{z}\right\rangle \int_{\R^N}\varphi(s)f_{z}(s)ds
  =\hat{\mu}(z)\hat{\varphi}(z).
\end{align*}

\begin{Th}[Paley-Wiener theorem for $\omega$-ultradistributions of
    Beurling type]
  \label{thm:prop 7.2-7.3}
  Let $\omega\in\W'$ and $K\subset\R^N$ compact and convex. If
  $\mu\in\E_{(\omega)}'(\R^N)$
with $\supp \mu\subset K$ then $\hat{\mu}$ is entire and there
exist $C,\lambda>0$ such that
\begin{equation}
  \left|\hat{\mu}(z)\right|\leq Ce^{H_{K}(\Im z)+\lambda\omega(z)}
  \,\,\,\,\,\,\,\,\,\,\forall z\in\mathbb{C}^{N}.
  \label{eq:(2)-1}
\end{equation}
This holds, in particular, for $K$ equal to the convex hull of $\supp\mu$.
Moreover,
\begin{equation}
  \left\langle \mu,\varphi\right\rangle =\frac{1}{(2\pi)^{N}}\int_{\R^N}
  \hat{\mu}(-t)\hat{\varphi}(t)dt
  \,\,\,\,\,\,\,\,\,\,\forall\varphi\in\D_{(\omega)}(\R^N).
  \label{eq:(3)-1}
\end{equation}
Conversely, if $g$ is an entire function on $\mathbb{C}^{N}$ that
satisfies (\ref{eq:(2)-1}), i.e.
\[
|g(z)|\leq Ce^{H_{K}(\Im z)+\lambda\omega(z)}
\,\,\,\,\,\,\,\,\,\,\,\,\,\forall z\in\mathbb{C}^{N},
\]
for some $C,\,\lambda>0$, then there exists $\mu\in\E_{(\omega)}'(\R^N)$ such
that $\hat{\mu}=g$ and $\supp\mu\subset K$.
\end{Th}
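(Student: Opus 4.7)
Plan. I split the proof into the forward implication (for $\mu\in\E_{(\omega)}'(\R^N)$ with $\supp\mu\subset K$, $\hat\mu$ is entire, satisfies the stated bound, and the inversion formula \eqref{eq:(3)-1} holds), and the converse (an entire $g$ satisfying the bound is the Fourier--Laplace transform of some such $\mu$). Throughout I mimic the Braun--Meise--Taylor proof in the Roumieu case (cf.\ \cite[Prop.~7.2, Prop.~7.3]{BMT}), substituting each use of a $\W$-result by its $\W'$ analogue in the excerpt, namely Lemma~\ref{3.3 B}, Theorem~\ref{prop:2.3.6} and Proposition~\ref{prop:5.3}.

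Forward direction. The map $z\mapsto f_z$ is holomorphic from $\C^N$ into $\E_{(\omega)}(\R^N)$ (its partial derivatives $-ix_jf_z$ stay in the space), so $\hat\mu(z)=\langle\mu,f_z\rangle$ is entire by composition with the continuous functional $\mu$. By Proposition~\ref{prop:5.3} $\mu$ is continuous on $\E_{(\omega)}(\R^N)$, hence $|\langle\mu,f\rangle|\le Cp_{K_0,m}(f)$ for some compact $K_0\supset\supp\mu$, some $m\in\N$ and some $C>0$. Applied to $f_z$ via the computation~\eqref{eq:AA}, this yields $|\hat\mu(z)|\le Ce^{H_{K_0}(\Im z)+m\omega(z)}$. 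To sharpen $K_0$ to the given $K\supset\supp\mu$, take cutoffs $\chi_n\in\D_{(\omega)}$ equal to $1$ near $K$ with $\supp\chi_n\subset K+B(0,1/n)$: since $\supp\mu\subset K$ one has $\hat\mu(z)=\langle\mu,\chi_nf_z\rangle$, and combining the continuity estimate for $\chi_nf_z$ with the Leibniz rule gives, in the limit $n\to\infty$, the required bound with $H_K$ and a possibly larger $\lambda$ absorbing the $\omega$-contribution from the derivatives of $\chi_n$. The inversion formula~\eqref{eq:(3)-1} follows by writing $\varphi(y)=(2\pi)^{-N}\int\hat\varphi(t)e^{iy\cdot t}\,dt$ and commuting $\mu$ with the integral; commutation is justified because $|\hat\varphi(t)|\le C_ke^{-k\omega(t)}$ for every $k$ by Theorem~\ref{prop:2.3.6}, so $\int\hat\varphi(t)f_{-t}\,dt$ converges absolutely in $\E_{(\omega)}$.

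Converse direction. Given entire $g$ with $|g(z)|\le Ce^{H_K(\Im z)+\lambda\omega(z)}$, define
\[
\langle\mu,\varphi\rangle:=\frac{1}{(2\pi)^N}\int_{\R^N}g(-t)\hat\varphi(t)\,dt,\qquad\varphi\in\D_{(\omega)}(\R^N).
\]
Convergence and continuity on $\D_{(\omega)}(\R^N)$ are immediate: on the real axis $|g(-t)|\le Ce^{\lambda\omega(t)}$, while by Theorem~\ref{prop:2.3.6} $|\hat\varphi(t)|\le C_ke^{-k\omega(t)}$ for every $k$, and choosing $k$ large enough and using $(\gamma)'$ as in~\eqref{G2} makes the integrand integrable. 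To prove $\supp\mu\subset K$, fix $\varphi\in\D_{(\omega)}$ with $\supp\varphi\cap K=\emptyset$ and use the separating hyperplane theorem to find $\eta\in\R^N$ with $H_{\supp\varphi}(\eta)+H_K(-\eta)<0$. By Cauchy's theorem in $\C^N$ (justified by the growth controls), shift the contour from $\R^N$ to $\R^N+it\eta$, $t>0$: the bound on $g$ together with the Paley--Wiener estimate on $\hat\varphi$ along the shifted contour produces an integrand dominated by $Ce^{t(H_K(-\eta)+H_{\supp\varphi}(\eta))}e^{(\lambda-k)\omega(\cdot)}$, which vanishes as $t\to+\infty$; hence $\langle\mu,\varphi\rangle=0$. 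By Proposition~\ref{prop:5.3} the compactly supported $\mu$ then extends to an element of $\E_{(\omega)}'(\R^N)$, and $\hat\mu=g$ follows from the definition of $\mu$ and Fourier inversion.

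The main obstacle I foresee is the refinement from $H_{K_0}$ to $H_K$ in the forward direction, which demands careful bookkeeping of cutoff derivatives in $\D_{(\omega)}$ against the exponential weights, and the multidimensional contour deformation for the support estimate in the converse. Both are standard techniques in Paley--Wiener theory, but require adaptation to the $\omega$-weighted setting under the weaker condition $(\gamma)'$.
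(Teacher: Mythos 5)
Your overall architecture (forward direction via continuity of $\mu$ on $\E_{(\omega)}$ applied to $f_z$, inversion via Fourier analysis, converse via the explicit formula $\langle\mu,\varphi\rangle=(2\pi)^{-N}\int g(-t)\hat\varphi(t)\,dt$ plus Proposition~\ref{prop:5.3}) matches the paper's. The one place where you diverge is precisely the step you flag as the main obstacle, and there your argument has a genuine gap. Continuity of $\mu$ only gives $|\langle\mu,f\rangle|\le C\,p_{K_0,m}(f)$ for \emph{some} compact $K_0$, and you propose to pass from $H_{K_0}$ to $H_K$ by writing $\hat\mu(z)=\langle\mu,\chi_nf_z\rangle$ with cutoffs $\chi_n$ supported in $K+B(0,1/n)$ and "taking the limit $n\to\infty$". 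This does not work as stated: the seminorms $p_{K+B(0,1/n),m}(\chi_n)$ necessarily blow up as $n\to\infty$ (a $\D_{(\omega)}$-cutoff transitioning over a shell of width $1/n$ has derivative bounds degenerating in $n$), so the constant in front of $e^{H_{K+B(0,1/n)}(\Im z)+\lambda\omega(z)}$ depends on $n$ and you cannot let $n\to\infty$ for fixed $z$. The classical repair is to optimize $n$ against $|z|$ (as in H\"ormander's distributional Paley--Wiener theorem, where the order of $\mu$ is finite and the cutoff constants grow polynomially in $n$); here $\mu$ has "infinite order" in the ultradifferentiable scale and you would have to show that the $n$-dependence of the $\D_{(\omega)}$-cutoff constants can be absorbed into $e^{\lambda\omega(z)}$ after optimization. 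You neither carry out this optimization nor establish the required quantitative control on the cutoffs, so the estimate \eqref{eq:(2)-1} with the sharp support function $H_K$ is not actually proved.

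The paper circumvents this entirely by a dilation argument: assuming $0\in\mathring K$, it sets $f_t(x):=f(tx)$ for $0<t<1$ and shows $f_t\to f$ in $\E_{(\omega)}(\R^N)$ (this is where the detailed seminorm estimates with the Lagrange theorem appear), concluding that $\langle\mu,f\rangle=0$ for every $f\in\E_{(\omega)}(\R^N)$ vanishing on $K$. Hence $\mu$ factors through the Whitney jets on $K$ and satisfies $|\langle\mu,f\rangle|\le C\,p_{K,\lambda}(f)$ with the seminorm taken on $K$ itself; plugging in $f_z$ then yields $H_K(\Im z)+\lambda\omega(z)$ directly, with no cutoff and no limiting procedure. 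If you want to keep your structure, you should replace the cutoff step by this dilation argument (or by an explicit optimization with quantified cutoff constants, which is considerably harder in the $\omega$-setting). Your treatment of the inversion formula and of the converse (contour shift for the support statement) is a reasonable, if sketched, alternative to the paper's convolution identity $\langle\mu,\varphi\rangle=\mu*\check\varphi(0)$ and its citation of \cite[Prop.~7.3]{BMT}.
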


\begin{proof}
Let us first prove that if $f\in\E_{(\omega)}(\R^N)$ with 
$\left.f\right|_{K}\equiv0$
then $\left\langle \mu,f\right\rangle =0$.

To this aim we assume, without loss of generality, that $0\in\mathring{K}$
and define
\[
f_{t}(x):=f(tx),\,\,\,\,\,\,\,\,\,\,0<t<1.
\]
Then $\left\langle \mu,f_{t}\right\rangle =0$. Let us to prove that
\begin{align}
  {\displaystyle \underset{t\rightarrow1^{-}}{\lim}}\left\langle
  \mu,f_{t}\right\rangle  & =\left\langle \mu,f\right\rangle .
  \label{eq:(4)}
\end{align}

We have that $\mu\in\E_{(\omega)}'(\R^N)$, so $\mu$ is a linear and continuous
function on $\E_{(\omega)}(\R^N)$ and to prove (\ref{eq:(4)}) it's sufficient
to prove that $f_{t}\rightarrow f$ in $\E_{(\omega)}(\R^N)$. Therefore, fix
$\tilde{K}\subset\R^N$ compact, $m\in\N$ and prove that 
\begin{equation}
  \sup_{\alpha\in\N_0^N}\sup_{x\in\tilde K}
  |D^{\alpha}f_{t}(x)-D^{\alpha}f(x)|e^{-m\varphi^{*}\left(\frac{|\alpha|}{m}\right)}
  \rightarrow0.
  \label{eq:(5)}
\end{equation}

Indeed,
\begin{align*}
|D^{\alpha}f_{t}(x)-D^{\alpha}f(x)| & =|D^{\alpha}f(tx)-D^{\alpha}f(x)|
  =\left|t^{\alpha}(D^{\alpha}f)(tx)-D^{\alpha}f(x)\right|\\
& \leq\left|t^{\alpha}(D^{\alpha}f)(tx)-t^{\alpha}D^{\alpha}f(x)\right|
+\left|t^{\alpha}D^{\alpha}f(x)-D^{\alpha}f(x)\right|\\
& =t^{\alpha}\left|(D^{\alpha}f)(tx)-D^{\alpha}f(x)\right|+
(1-t^{\alpha})\left|D^{\alpha}f(x)\right|,
\end{align*}
so, for $0<t<1$, we have that 
\begin{align}
  \sup_{\alpha\in\N_0^N}\sup_{x\in\tilde K}
    |D^{\alpha}f_{t}(x)-D^{\alpha}f(x)|e^{-m\varphi^{*}\left(\frac{|\alpha|}{m}\right)} &
  \leq\sup_{\alpha\in\N_0^N}\sup_{x\in\tilde K}
  |(D^{\alpha}f)(tx)-D^{\alpha}f(x)|e^{-m\varphi^{*}\left(\frac{|\alpha|}{m}\right)}+
  \label{eq:(8)}\\
  & +(1-t^{\alpha})\sup_{\alpha\in\N_0^N}\sup_{x\in\tilde K}
  |D^{\alpha}f(x)|e^{-m\varphi^{*}\left(\frac{|\alpha|}{m}\right)}.
  \nonumber 
\end{align}

We observe that $(1-t^{\alpha})\rightarrow0$ for $t\rightarrow1^{-}$
and
$\ds\sup_{\alpha\in\N_0^N}\sup_{x\in\tilde K}
|D^{\alpha}f(x)| e^{-m\varphi^{*}\left(\frac{|\alpha|}{m}\right)}=C_{\tilde{K}}<\infty$
because $f\in\E_{(\omega)}(\R^N)$.

To estimate also the first addend of (\ref{eq:(8)}) let us remark
that it's not restrictive to assume $0\in\tilde{K}$, since we can
enlarge $\tilde{K}$. Therefore, denoting by $\ch(\tilde{K})$ the
convex hull of $\tilde{K}$, by the Lagrange Theorem we have that
there exists $\xi\in{\ch}(\tilde{K})$ on the segment of extremes
$x$ and $tx$, such that 
\begin{align*}
 &\sup_{\alpha\in\N_0^N}\sup_{x\in\tilde K}
  |(D^{\alpha}f)(tx)-D^{\alpha}f(x)|e^{-m\varphi^{*}\left(\frac{|\alpha|}{m}\right)} 
  =\sup_{\alpha\in\N_0^N}\sup_{x\in\tilde K}
  |\left\langle
  \nabla(D^{\alpha}f)(\xi),tx-x\right\rangle
  |e^{-m\varphi^{*}\left(\frac{|\alpha|}{m}\right)}\\
  & \leq \sup_{\alpha\in\N_0^N}
  \left\{ \underset{\xi
    \in{\ch}(\tilde{K})}{\sup}\|\nabla D^{\alpha}f(\xi)\|
  \cdot(1-t)\cdot\underset{x\in\tilde{K}}{\sup}\|x\|\cdot
  e^{-m\varphi^{*}\left(\frac{|\alpha|}{m}\right)}\right\} \\
  & \leq C(1-t)\sup_{\alpha\in\N_0^N}
  \underset{\xi\in{\ch}(\tilde{K})}{\sup}
  \|\nabla D^{\alpha}f(\xi)\|
  e^{-m\varphi^{*}\left(\frac{|\alpha|}{m}\right)},
\end{align*}
for some $C>0$.

However,
\begin{align*}
  \|\nabla D^{\alpha}f(\xi)\| & \leq\sum_{j=1}^{N}
  \left|D_{j}D^{\alpha}f(\xi)\right|
   \leq\sum_{j=1}^{N}\underset{|\beta|=1}{\sup}
  \left|D^{\beta}D^{\alpha}f(\xi)\right|
  =N\underset{|\beta|=1}{\sup}\left|D^{\alpha+\beta}f(\xi)\right|,
\end{align*}
so
\begin{align*}
  \sup_{\alpha\in\N_0^N}\sup_{x\in\tilde K}
  |(D^{\alpha}f)(tx)-D^{\alpha}f(x)|e^{-m\varphi^{*}\left(\frac{|\alpha|}{m}\right)}
  \leq & CN(1-t)\underset{\tilde{\alpha}\in\N^{N}}{\sup}
  \underset{\xi\in\ch(\tilde{K})}{\sup}\left|D^{\tilde{\alpha}}f(\xi)\right|
  e^{-m\varphi^{*}\left(\frac{|\alpha|}{m}\right)},
\end{align*}
where
$\underset{\tilde{\alpha}\in\N^{N}}{\sup}
\underset{\xi\in\ch(\tilde{K})}{\sup}
\left|D^{\tilde{\alpha}}f(\xi)\right|e^{-m\varphi^{*}\left(\frac{|\alpha|}{m}\right)}<\infty$
by definition of $f\in\E_{(\omega)}(\R^N)$. Then, from (\ref{eq:(8)}), we have
obtained (\ref{eq:(5)}), i.e. 
\[
f_{t}\rightarrow f\,\,\,\,\,\,\,\,\,\,\mbox{in}\ \E_{(\omega)}(\R^N).
\]
Therefore (\ref{eq:(4)}) holds true.

Since $\left\langle \mu,f_{t}\right\rangle =0$ for all $t\in(0,1)$,
then $\left\langle \mu,f\right\rangle =0$. This can be done for all
$f\in\E_{(\omega)}(\R^N)$ with $\left.f\right|_{K}=0$, and hence there exists 
$C,\lambda>0$
such that
\begin{equation}
  \left|\left\langle \mu,f\right\rangle \right|\leq Cp_{K,\lambda}(f)
  \,\,\,\,\,\,\,\,\,\,\forall f\in\E_{(\omega)}(\R^N).
  \label{eq:(9)}
\end{equation}
For $f_{z}(x)=e^{-i<x,z>}$, we observe that
\begin{align*}
  p_{K,\lambda} & (f_{z})=\sup_{\alpha\in\N_0^N}\sup_{x\in\R^N}
  |f_{z}^{(\alpha)}(x)|e^{-\lambda\varphi^*\left(\frac{|\alpha|}{\lambda}\right)}\leq
  e^{H_{K}(\Im z)+\lambda\omega(z)}
\end{align*}
by (\ref{eq:AA}).

Substituting in (\ref{eq:(9)}) with $f=f_{z}$ and remembering that
$\hat{\mu}(z)=\left\langle \mu,f_{z}\right\rangle $, we obtain (\ref{eq:(2)-1});
moreover $\hat{\mu}$ is entire because $f_{z}$ is entire
(cf. also \cite[Prop.~7.2]{BMT}).

To prove (\ref{eq:(3)-1}) we observe that if $\varphi\in\D_{(\omega)}(\R^N)$, then
\begin{align*}
  \left\langle \mu,\varphi\right\rangle  & =\mu*\check{\varphi}(0)
  ={\F}^{-1}\left(\widehat{\mu*\check{\varphi}}\right)(0)\\
 & ={\F}^{-1}\left(\hat{\mu}\cdot\hat{\check{\varphi}}\right)(0)\\
 & =\frac{1}{(2\pi)^{N}}\int_{\R^N}\hat{\mu}(t)\hat{\varphi}(-t)dt\\
 & =\frac{1}{(2\pi)^{N}}\int_{\R^N}\hat{\mu}(-t)\hat{\varphi}(t)dt.
\end{align*}
Conversely, let $g$ be entire on $\mathbb{C}^{N}$ statisfying (\ref{eq:(2)-1})
and define $\mu$ by
\[
\left\langle \mu,f\right\rangle =\frac{1}{(2\pi)^{N}}\int_{\R^N}g(-t)
\hat{f}(t)dt,\,\,\,\,\,\,\,\,\,\,f\in\D_{(\omega)}(\R^N).
\]
Then $\mu\in\D_{(\omega)}'(\R^N)$ with $\supp\mu\subset K$, hence
$\mu\in\E_{(\omega)}'(\R^N)$
by Proposition \ref{prop:5.3}, and $\hat{\mu}=g$ by (\ref{eq:(3)-1})
(see also \cite[Prop. 7.3]{BMT}).
\end{proof}

\begin{Prop}
  Let $\Omega\subset\R^N$ be open and $\omega\in\W$. For every $\mu\in
  \D_{\{\omega\}}'(\Omega)$
there is a weight function $\sigma\in\W$ with $\sigma=o(\omega)$
such that $\mu\in{\D}_{(\sigma)}'(\Omega)\subset\D_{\{\omega\}}'(\Omega)$.
The analogous statement holds for $\E_{\{\omega\}}'(\Omega)$.
\end{Prop}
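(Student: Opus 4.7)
The plan is to localize $\mu$ by cut-offs, apply the (Roumieu) Paley--Wiener theorem for compactly supported ultradistributions to each local piece, diagonalize in $\varepsilon$ to extract a slow-growing envelope, and finally invoke the proposition stated immediately after Remark~\ref{remG1} to produce a single intermediate weight handling all pieces.

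First I would fix a compact exhaustion $K_1\subset\mathring{K}_2\subset K_2\subset\ldots$ of $\Omega$ and cut-offs $\chi_j\in\D_{\{\omega\}}(\mathring{K}_{j+1})$ with $\chi_j\equiv 1$ on $K_j$. Then $\mu_j:=\chi_j\mu$ lies in $\E'_{\{\omega\}}(\R^N)$ with $\supp\mu_j\subset K_{j+1}$ by Proposition~\ref{prop:5.3}, and the Roumieu analog of Theorem~\ref{thm:prop 7.2-7.3} (dual to condition $(3)$ of Theorem~\ref{prop:2.3.5}) yields, for every $\varepsilon>0$, a constant $C_{j,\varepsilon}\geq 1$ with
\[
|\hat{\mu}_j(z)|\leq C_{j,\varepsilon}\,e^{H_{K_{j+1}}(\Im z)+\varepsilon\omega(z)}, \qquad \forall z\in\C^N.
\]

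Next I would define the envelope
\[
g_j(t):=\inf_{k\in\N}\left(\log C_{j,1/k}+\frac{\omega(t)}{k}\right), \qquad t\geq 0,
\]
which maps $[0,\infty)$ to $[0,\infty)$ and, taking the infimum inside the exponential, satisfies $|\hat{\mu}_j(z)|\leq e^{H_{K_{j+1}}(\Im z)+g_j(|z|)}$. The critical point is that fixing any single $k$ in the infimum gives $\limsup_{t\to\infty}g_j(t)/\omega(t)\leq 1/k$, so letting $k\to\infty$ yields $g_j=o(\omega)$. Applying the proposition after Remark~\ref{remG1} to the sequence $\{g_j\}_{j\in\N}$ produces a single $\sigma\in\W$ with $\sigma=o(\omega)$ and $g_j=o(\sigma)$ for every $j$; in particular $g_j(t)\leq\sigma(t)+C'_j$, whence $|\hat{\mu}_j(z)|\leq C'_j\,e^{H_{K_{j+1}}(\Im z)+\sigma(|z|)}$, and the converse direction of Theorem~\ref{thm:prop 7.2-7.3} gives $\mu_j\in\E'_{(\sigma)}(\R^N)$.

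To conclude, for any $\varphi\in\D_{(\sigma)}(\Omega)$ supported in some $K_j$ we have $\chi_j\varphi=\varphi$, hence $\langle\mu,\varphi\rangle=\langle\mu_j,\varphi\rangle$, and continuity in the $\D_{(\sigma)}$-topology follows from $\mu_j\in\E'_{(\sigma)}(\R^N)$; this shows $\mu\in\D'_{(\sigma)}(\Omega)$, while the inclusion $\D'_{(\sigma)}(\Omega)\subset\D'_{\{\omega\}}(\Omega)$ is the dual of the continuous dense embedding in Proposition~\ref{prop:3.9}. For the analogous $\E'_{\{\omega\}}(\Omega)$ statement the argument simplifies, since $\mu$ already has compact support (Proposition~\ref{prop:5.3}), so no partition of unity is required and a single application of Lemma~\ref{1.7} replaces the multi-index proposition. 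The main obstacle is the diagonalization: the Roumieu estimates come with constants $C_{j,\varepsilon}$ that may blow up as $\varepsilon\to 0$, and taking the infimum over $\varepsilon=1/k$ is exactly what exploits the freedom to let $\varepsilon$ shrink with $|z|$ and so beat $\omega$ asymptotically.
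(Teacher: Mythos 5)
Your argument is correct in outline, but it cannot be compared line-by-line with the paper, because the paper gives no proof at all here: it simply cites \cite{BMT}, Proposition~7.6. What you propose is a legitimate self-contained reconstruction: localize $\mu$ by the cut-offs $\chi_j$ (which exist by non-quasianalyticity and give $\chi_j\mu\in\E_{\{\omega\}}'$ via Proposition~\ref{prop:5.3}), pass to the Fourier--Laplace side, and observe that the Roumieu dual estimates ``for every $\varepsilon>0$ with constant $C_{j,\varepsilon}$'' compress into the single pointwise envelope $g_j=\inf_k(\log C_{j,1/k}+\omega/k)$, which is $o(\omega)$ because $\omega(t)\to\infty$; the proposition following Remark~\ref{remG1} then dominates all the $g_j$ simultaneously by one $\sigma\in\W$ with $\sigma=o(\omega)$. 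The real payoff of working on the transform side is exactly the point you flag: the blow-up of $C_{j,\varepsilon}$ as $\varepsilon\to0$ becomes a pointwise infimum to which the weight-construction machinery applies directly, whereas an attempt to interpolate the defining seminorms $\int|\hat f|e^{\omega/m}dt$ themselves would founder on the fact that an infimum of integrals is not the integral of the infimum. Two details you should make explicit. First, the paper states the ultradistributional Paley--Wiener theorem only in the Beurling case (Theorem~\ref{thm:prop 7.2-7.3}); the Roumieu version you invoke (for every $m$ a constant $C_m$ with $|\hat\mu(z)|\leq C_m e^{H_K(\Im z)+\omega(z)/m}$) is true and follows from continuity of $\mu_j$ on each step of the inductive limit defining the topology of $\E_{\{\omega\}}$ near $\supp\mu_j$, but it must be imported from \cite{BMT} or proved by the computation \eqref{eq:AA}. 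Second, the converse direction of Theorem~\ref{thm:prop 7.2-7.3} produces some $\nu_j\in\E_{(\sigma)}'(\R^N)$ with $\hat{\nu}_j=\hat{\mu}_j$; to conclude that $\nu_j$ is the continuous extension of $\mu_j$ you should invoke the inversion formula \eqref{eq:(3)-1} together with the density of $\D_{\{\omega\}}$ in $\D_{(\sigma)}$ from Proposition~\ref{prop:3.9} (the same density that also justifies your final gluing step). With these two points spelled out, the proof is complete, and your remark that the $\E_{\{\omega\}}'$ case needs only the single-function Lemma~\ref{1.7} is accurate.
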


\begin{proof}
See \cite{BMT}, Proposition 7.6.
\end{proof}

\section{The Cauchy problem for overdetermined systems.}

In this section we consider the Cauchy problem for overdetermined
systems of linear partial differential operators with constant coefficients
in the classes of $\omega-$ultradifferentiable functions of Beurling
type defined
in the previous section.

To bypass the question of formal coherence of the initial data, that
could be especially intricate in the overdetermined case (cf. \cite{AHLM},
\cite{AN2}, \cite{N2}), we consider initial data in the Whitney
sense, in the spirit of \cite{N2}, \cite{BN1}, \cite{BN3}.

Let $K_{1}$ and $K_{2}$ be closed and convex subsets of $\R^N$ such
that $K_{1}\subsetneq K_{2}$ and $\overline{\mathring{K_{j}}}=K_{j}$
for $j=1,2$.

For $\omega\in\W'$ we denote by ${\I}^{(\omega)}(K_{2},\Omega)$ the subspace of
functions in $\E_{(\omega)}(\Omega)$ which vanish of infinite order on $K_{2}:$
\[
  {\I}^{(\omega)}(K_{2},\Omega)=\left\{ f\in\E_{(\omega)}(\Omega):\,
  D^{\alpha}f\equiv0\,\,\mbox{on}\,\,K_{2},\,\forall\alpha\in\N_{0}^{N}\right\} .
\]

\begin{Def}
  \begin{em}
    Let $\omega\in\W'$.
We define the space
$W_{K_{2}}^{(\omega)}$ of \emph{Whitney $\omega$-ultradifferentiable functions}
on $K_{2}$ by the exact sequence
\[
0\longrightarrow{\I}^{(\omega)}(K_{2},\Omega)\longrightarrow
\E_{(\omega)}(\Omega)\longrightarrow W_{K_{2}}^{(\omega)}\longrightarrow0;
\]
i.e.
\[
W_{K_{2}}^{(\omega)}\simeq\E_{(\omega)}(\Omega)\bigl/{\I}^{(\omega)}(K_{2},\Omega).
\]
In the same way we define $W_{K_{1}}^{(\omega)}$, the space of Whitney
$\omega$-ultradifferentiable functions on $K_{1}$.
\end{em}
\end{Def}

Denoting by $\mathcal P=\C[\zeta_1,\ldots,\zeta_N]$ the ring
of complex polynomials
in $N$ indeterminates, we consider $W_{K_2}^{(\omega)}$ as a 
unitary left and right ${\mathcal P}-$module
by the action of $p(\zeta)$ on $u\in W_{K_2}^{(\omega)}$ described by
\[
p(\zeta)u=up(\zeta)=p(D)u,
\]
by the formal substitution $\zeta_j\leftrightarrow\frac1i\partial_j$.

Given an $a_1\times a_0$ matrix $A_0(D)$ with polynomial entries, we can
thus consider the corresponding operator $A_0(D)$.
We want to solve, in the Whitney's sense, the Cauchy problem
\beqs
\label{G3}
\begin{cases}
A_{0}(D)u=f\\
\left.u\right|_{K_{1}}\equiv0,
\end{cases}
\eeqs
where $\left.u\right|_{K_{1}}\equiv0$ means that $u$ vanishes with
all its derivatives on
$K_1$.

Let us remark that if
$^{t}Q(\zeta):{\mathcal P}^{a_{1}}\rightarrow{\mathcal P}$ is such
that 
\begin{equation}
  ^{t}A_{0}(\zeta)^{t}Q(\zeta)\equiv0,
  \label{eq:1viola}
\end{equation}
then, in order to solve the Cauchy problem \eqref{G3},
$f$ must satisfy the integrability condition
\begin{equation}
  Q(D)f=0,
  \label{eq:2viola}
\end{equation}
because of $Q(D)f=Q(D)A_{0}(D)u=0$.

Since ${\mathcal P}$ is a Noetherian ring, the collection of all vectors
$^{t}Q(\zeta)$ satisfying (\ref{eq:1viola}) form a finitely generated
${\mathcal P}-$module. So we can insert the map
$^{t}A_{0}(\zeta):\ {\mathcal P}^{a_{1}}\rightarrow{\mathcal P}^{a_{0}}$
into a Hilbert resolution:
\[
0\longrightarrow{\mathcal P}^{a_{d}}\xrightarrow{^{t}A_{d-1}(\zeta)}
{\mathcal P}^{a_{d-1}}\longrightarrow\ldots\longrightarrow{\mathcal P}^{a_{2}}
\xrightarrow{^{t}A_{1}(\zeta)}{\mathcal P}^{a_{1}}\xrightarrow{^{t}A_{0}(\zeta)}
            {\mathcal P}^{a_{0}}\longrightarrow{\M}\longrightarrow0,
\]
where
${\M}=\coker{}^{t}A_{0}(\zeta)={\mathcal P}^{a_{0}}\bigl/{}^{t}A_{0}(\zeta)
{\mathcal P}^{a_{1}}$
and the matrix $^{t}A_{1}(\zeta)$ is obtained from a basis of the
integrability conditions (\ref{eq:1viola}). The sequence is exact,
i.e. $\Im{}^{t}A_{j}=\Ker{}^{t}A_{j+1}$.

Therefore a necessary condition to solve \eqref{G3}, is that
$f$ satisfies the following integrability condition:
\begin{equation}
  A_{1}(D)f=0.
  \label{eq:4viola}
\end{equation}

Moreover, every necessary condition for the solvability of \eqref{G3},
which can be expressed in terms of linear partial differential equations,
is a consequence of (\ref{eq:4viola}) (cf. \cite{N2}).

\begin{Def}
  \begin{em}
If $A_{1}(\zeta)\not\not\equiv0$ then
the Cauchy problem \eqref{G3} is called \emph{overdetermined} and,
to solve it, the condition (\ref{eq:4viola}) has to be satisfied.
\end{em}
  \end{Def}

Let us remark that if $u$ solves \eqref{G3}, then also $f$ must vanish
with all its
derivatives on $K_1$, so that we look for solutions
$u\in \left(W_{K_2}^{(\omega)}\right)^{a_0}$ of \eqref{G3} when $f$
satisfies
\beqs
\label{GG3}
\begin{cases}
f\in\left(W_{K_{2}}^{(\omega)}\right)^{a_{1}}\\
A_{1}(D)f=0\\
\left.f\right|_{K_{1}}\equiv0.
\end{cases}
\eeqs

\begin{Rem}
  \begin{em}
By Whitney's extension theorem it's not restrictive to consider zero
Cauchy data (see \cite{B}, \cite{BBMT}, \cite{MT}, \cite{M}).
\end{em}
\end{Rem}

Let us denote by ${\I}^{(\omega)}(K_{1,}K_{2})$ the space of
Whitney $\omega$-ultradifferentiable fuctions on $K_{2}$ which vanish
of infinite
order on $K_{1}:$
\[
  {\I}^{(\omega)}(K_{1,}K_{2})=\left\{ f\in W_{K_{2}}^{(\omega)}:\,
  D^{\alpha}\left.f\right|_{K_{1}}\equiv0\,\,\forall\alpha\in\N_{0}^{N}\right\} .
\]

The Cauchy problem \eqref{G3}-\eqref{GG3} is then equivalent to:
\begin{equation}
\begin{cases}
  \mbox{given}\,\,f\in\left({\I}^{(\omega)}(K_{1},K_{2})\right)^{a_{1}}\,\,
  \mbox{such that}\,\,A_{1}(D)f=0\\
  \mbox{find}\,\,u\in\left({\I}^{(\omega)}(K_{1},K_{2})\right)^{a_{0}}\,\,
  \mbox{such that}\,\,A_{0}(D)u=f.
\end{cases}
\label{eq:sistema whitney}
\end{equation}

\begin{Rem}
\label{rem:villa}
  \begin{em}
By the isomorphisms
\beqsn
&&\Ext_{\mathcal P}^{0}\left({\M},{\I}^{(\omega)}(K_{1},K_{2})\right)\simeq
\Ker A_0(D)=\{u\in{\I}^{(\omega)}(K_{1},K_{2})^{a_0}:\ A_0(D)u=0\}\\
&&\Ext_{\mathcal P}^{1}\left({\M},{\I}^{(\omega)}(K_{1},K_{2})\right)\simeq
\frac{\Ker A_{1}(D)}{\Im A_{0}(D)},
\eeqsn
we have:
\begin{enumerate}
  \item
    \textbf{uniqueness} of solutions of the Cauchy problem
    (\ref{eq:sistema whitney})
is equivalent to the condition 
\[
\Ext_{{\mathcal P}}^{0}({\M},{\I}^{(\omega)}(K_{1},K_{2}))=0;
\]
\item
\textbf{existence }of solutions of (\ref{eq:sistema whitney}), is
equivalent to the condition
\[
\Ext_{{\mathcal P}}^{1}({\M},{\I}^{(\omega)}(K_{1},K_{2}))=0;
\]
\item
\textbf{existence and uniqueness} of a solution of (\ref{eq:sistema whitney}),
is equivalent to the condition
\[
\Ext_{{\mathcal P}}^{0}({\M},{\I}^{(\omega)}(K_{1},K_{2}))=
\Ext_{{\mathcal P}}^{1}({\M},{\I}^{(\omega)}(K_{1},K_{2}))=0.
\]
\end{enumerate}
\end{em}
\end{Rem}

\begin{Rem}
   \begin{em}
     Remark~\ref{rem:villa} enlightens the algebraic invariance of the
     problem: uniqueness and/or existence of solutions of the Cauchy problem
     \eqref{eq:sistema whitney} depend only on the module $\mathcal M$
     and not on it's presentation by a particular matrix ${}^tA_0(D)$.
  \end{em}
\end{Rem}

Note also that we have the short exact sequence
\[
0\longrightarrow{\I}^{(\omega)}(K_{1},K_{2})\longrightarrow W_{K_{2}}^{(\omega)}
\longrightarrow W_{K_{1}}^{(\omega)}\longrightarrow0,
\]
that implies the long exact sequence
\begin{align}
  0\longrightarrow\Ext_{{\mathcal P}}^{0}\left({\M},{\I}^{(\omega)}(K_{1},K_{2})\right)
  & \longrightarrow\Ext_{{\mathcal P}}^{0}\left({\M},W_{K_{2}}^{(\omega)}\right)
  \longrightarrow\Ext_{{\mathcal P}}^{0}\left({\M},W_{K_{1}}^{(\omega)}\right)
  \longrightarrow
  \nonumber \\
  \longrightarrow\Ext_{{\mathcal P}}^{1}\left({\M},{\I}^{(\omega)}(K_{1},K_{2})\right)
  & \longrightarrow\Ext_{{\mathcal P}}^{1}\left({\M},W_{K_{2}}^{(\omega)}\right)
  \longrightarrow\Ext_{{\mathcal P}}^{1}\left({\M},W_{K_{1}}^{(\omega)}\right)
  \longrightarrow
  \nonumber \\
  \longrightarrow\Ext_{{\mathcal P}}^{2}\left({\M},{\I}^{(\omega)}(K_{1},K_{2})\right)
  & \longrightarrow\ldots.
  \label{eq:3312.}
\end{align}

As in \cite{N1} (cf. also \cite{B,BN1,BN3}) we have
that $W_{K_{i}}^{(\omega)}$, for $i=1,\,2$, are injective ${\mathcal P}-$modules,
i.e. the following holds:

\begin{Lemma}
\label{thm:A1.5-1}
  Let $\omega\in\W'$, ${\M}$ a ${\mathcal P}-$module
of finite type and $K$ a compact convex subset of $\R^N$.
Then $\Ext_{{\mathcal P}}^{j}({\M},W_{K_{i}}^{(\omega)})=0$
for $i=1,\,2$ and for all $j\geq1$.
\end{Lemma}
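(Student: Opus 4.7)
The plan is to compute $\Ext^j_{\mathcal P}(\M,W_{K_i}^{(\omega)})$ as the cohomology of a concrete differential complex and then prove exactness of that complex by duality together with a weighted-analytic division theorem. Take the free Hilbert resolution of $\M$ already written in the excerpt and apply $\mathrm{Hom}_{\mathcal P}(-,W_{K_i}^{(\omega)})$: since the $\mathcal P$-action on $W_{K_i}^{(\omega)}$ is $p(\zeta)\cdot u=p(D)u$, this produces the cochain complex
\[
0\to (W_{K_i}^{(\omega)})^{a_0}\xrightarrow{A_0(D)}(W_{K_i}^{(\omega)})^{a_1}
\xrightarrow{A_1(D)}(W_{K_i}^{(\omega)})^{a_2}\to\cdots,
\]
whose cohomology in degree $j$ is precisely $\Ext^j_{\mathcal P}(\M,W_{K_i}^{(\omega)})$. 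The lemma therefore reduces to the exactness of this complex at every position of degree $\geq 1$.

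The second step is to pass to the strong topological dual. The space $W_{K_i}^{(\omega)}$ is a quotient of the (FN)-space $\E_{(\omega)}(\Omega)$, hence itself (FN), and its strong dual is identified with the space of Beurling $\omega$-ultradistributions supported in $K_i$. By the Paley--Wiener Theorem~\ref{thm:prop 7.2-7.3}, the Fourier--Laplace transform realizes this dual as the space $E_{K_i}$ of entire functions $g$ on $\C^N$ satisfying, for some $C,\lambda>0$,
\[
|g(z)|\leq C\exp\bigl(H_{K_i}(\Im z)+\lambda\omega(z)\bigr),\qquad z\in\C^N.
\]
Under this identification the transposed complex becomes the complex of polynomial multiplications by the matrices ${}^tA_j(\zeta)$ acting on tuples in $E_{K_i}$. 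By reflexivity and the closed range theorem in this nuclear setting, exactness of the differential complex at positions of degree $\geq 1$ is equivalent to exactness of this transposed complex in $E_{K_i}$.

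The decisive analytic input is then a Palamodov-type \emph{Fundamental Principle} for the weight $H_{K_i}(\Im\zeta)+\lambda\omega(\zeta)$: every algebraic syzygy over $\mathcal P$ lifts to an analytic syzygy realized in $E_{K_i}$. This is established by the standard H\"ormander $\bar\partial$-techniques with $L^2$ estimates in the plurisubharmonic weight $H_{K_i}(\Im\zeta)+\lambda\omega(\zeta)$, which is plurisubharmonic precisely because $K_i$ is compact and convex (so $H_{K_i}\circ\Im$ is convex) and because $\omega\in\W'$ combined with condition $(\delta)$ makes $\omega(|\zeta|)$ plurisubharmonic on $\C^N$. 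This is the route of Nacinovich \cite{N1}, adapted to Whitney $\omega$-ultradifferentiable classes in \cite{B,BN1,BN3}, and it is the main obstacle of the proof: the algebraic exactness is formal once one has a free resolution, but realizing the lift inside a space of entire functions that simultaneously respects the support function $H_{K_i}$ and the weight $\omega$ requires genuine $\bar\partial$-analysis, and this is precisely where the convexity of $K_i$ and the assumption $\omega\in\W'$ (in particular $(\gamma)'$, which keeps the weighted space large enough) play their essential roles. Once the Fundamental Principle is in place, tracing back through the duality yields the claimed vanishing of $\Ext^j$ for all $j\geq 1$.
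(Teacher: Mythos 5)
The paper offers no proof of this lemma---it is quoted from \cite{N1} (cf.\ \cite{B,BN1,BN3})---and your sketch reconstructs exactly the argument of those references: compute $\Ext^j$ from the Hom-complex of the Hilbert resolution, dualize via Paley--Wiener (Theorem~\ref{thm:prop 7.2-7.3} and Lemma~\ref{lem:Lemma1}) to multiplication by ${}^tA_j(\zeta)$ on entire functions bounded by $e^{H_{K_i}(\Im\zeta)+\lambda\omega(\zeta)}$, and close the argument with the Ehrenpreis--Palamodov Fundamental Principle, which is precisely the machinery the paper itself deploys later (the proofs of Lemma~\ref{lem:lemma2} and Proposition~\ref{lem:LEMMA3} verify the plurisubharmonicity and bounded-oscillation hypotheses of \cite[Thm.~7.7.13]{H3} for these weights and use $(\gamma)'$ to absorb the polynomial loss $n\log(1+|\zeta|)$). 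The one imprecision is the assertion that exactness of the complex is ``equivalent'' to exactness of its transpose by reflexivity and nuclearity alone---recovering exactness of the original complex also requires the closed-range property of the transposed maps---but this is exactly what the quantitative division bounds of the Fundamental Principle supply, so the argument as you have organized it is sound.
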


By Lemma~\ref{thm:A1.5-1}, the complex (\ref{eq:3312.}) reduces
to:
\begin{align*}
  0 & \longrightarrow\Ext_{{\mathcal P}}^{0}\left({\M},{\I}^{(\omega)}(K_{1},K_{2})
  \right)\longrightarrow\Ext_{{\mathcal P}}^{0}\left({\M},W_{K_{2}}^{(\omega)}\right)
  \longrightarrow\Ext_{{\mathcal P}}^{0}\left({\M},W_{K_{1}}^{(\omega)}\right)
  \longrightarrow\\
  & \longrightarrow\Ext_{{\mathcal P}}^{1}\left({\M},{\I}^{(\omega)}(K_{1},K_{2})\right)
  \longrightarrow0.
\end{align*}
In particular 
\begin{equation}
  \Ext_{{\mathcal P}}^{j}({\M},{\I}^{(\omega)}(K_{1},K_{2}))=0
  \,\,\,\,\,\,\,\,\,\,\forall j>1.
  \label{eq:(7)}
\end{equation}

\begin{Rem}
\label{remG11}
\begin{em}
From Remark~\ref{rem:villa} and the above considerations, it follows that 
uniqueness and/or existence of solutions of the Cauchy problem 
\eqref{eq:sistema whitney} is related to injectivity and/or surjectivity
of the homomorphism
\beqs
\label{G12}
\Ext_{\mathcal P}^{0}\left(\M,W_{K_{2}}^{(\omega)}\right)
\longrightarrow\Ext_{\mathcal P}^{0}\left(\M,W_{K_{1}}^{(\omega)}\right).
\eeqs

The injectivity of \eqref{G12} is equivalent to the fact that
the dual homomorphism
\beqs
\label{G13}
\Ext_{\mathcal P}^{0}\left(\M,W_{K_{1}}^{(\omega)}\right)'
\longrightarrow\Ext_{\mathcal P}^{0}\left(\M,W_{K_{2}}^{(\omega)}\right)'
\eeqs
has a dense image.

Moreover, surjectivity is equivalent to have a dense and closed image.
But \eqref{G12} has a closed image if and only if \eqref{G13} has
a closed image (cf. \cite{Gr}, Ch.~IV, \S~2, n.~4, Thm.~3), so that
the surjectivity of \eqref{G12} is equivalent to the fact that the dual 
homorphism \eqref{G13} is injective and has a closed image.
\end{em}
\end{Rem}

By Remarks~\ref{rem:villa} and \ref{remG11},
and \cite[Prop. 1.1-1.2]{N2}, we have that:

\begin{Prop}
  Let $\omega\in\W'$ and $K_{1},\,K_{2}$ closed convex subsets of $\R^N$ with
  $K_{1}\subsetneq K_{2}$,
  $\overline{\mathring{K}_{j}}=K_{j}$
  for $j=1,\,2$.
  Let
$\M$ be a unitary ${\mathcal P}-$module of finite
  type and denote by $\Ass(\M)$ the set of all prime ideals
  associated to $\M$.

  Then the following statements are equivalent:
\begin{enumerate}
  \item
The Cauchy problem (\ref{eq:sistema whitney}) admits \textbf{at most one}
solution;
\item
$\ds\Ext_{{\mathcal P}}^{0}({\M},{\I}^{(\omega)}(K_{1},K_{2}))=0;$
\item
$\ds\Ext_{{\mathcal P}}^{0}({\mathcal P}\bigl/\wp,{\I}^{(\omega)}(K_{1},K_{2}))=0$
~for all $\wp\in\Ass(\M)$;
\item
The homomorphisms
\[
\Ext_{{\mathcal P}}^{0}\left({\mathcal P}\bigl/\wp,W_{K_{2}}^{(\omega)}\right)
\rightarrow\Ext_{{\mathcal P}}^{0}\left({\mathcal P}\bigl/\wp,W_{K_{1}}^{(\omega)}\right)
\]
are injective for all $\wp\in\Ass(\M)$;
\item
The homomorphisms
\[
\Ext_{{\mathcal P}}^{0}\left({\mathcal P}\bigl/\wp,W_{K_{1}}^{(\omega)}\right)'
\rightarrow\Ext_{{\mathcal P}}^{0}\left({\mathcal P}\bigl/\wp,W_{K_{2}}^{(\omega)}
\right)'
\]
have a dense image for all $\wp\in\Ass(\M)$.
\end{enumerate}
\end{Prop}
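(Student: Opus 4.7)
The plan is to prove the chain of equivalences $(1)\Leftrightarrow (2)\Leftrightarrow (3)\Leftrightarrow (4)\Leftrightarrow (5)$, relying on the injectivity of the Whitney classes $W_{K_j}^{(\omega)}$ over ${\mathcal P}$ established in Lemma \ref{thm:A1.5-1}, together with standard commutative algebra for the reduction to associated primes and the Hahn--Banach principle for the duality step.

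First, for $(1)\Leftrightarrow (2)$ I would observe that, by the reformulation \eqref{eq:sistema whitney} and the identification recalled in Remark \ref{rem:villa}, uniqueness of the Cauchy problem amounts precisely to the triviality of the kernel of $A_{0}(D)$ acting on $\left({\I}^{(\omega)}(K_{1},K_{2})\right)^{a_{0}}$, which is identified with $\Ext_{{\mathcal P}}^{0}({\M},{\I}^{(\omega)}(K_{1},K_{2}))$. For $(2)\Leftrightarrow (3)$, I would invoke the algebraic fact stated as Propositions~1.1--1.2 of \cite{N2}, that the contravariant functor $\mathrm{Hom}_{{\mathcal P}}(-,{\I}^{(\omega)}(K_{1},K_{2}))$ vanishes on ${\M}$ if and only if it vanishes on each quotient ${\mathcal P}/\wp$ for $\wp\in\Ass({\M})$; this rests on the existence of a finite filtration of ${\M}$ whose successive quotients are of this form, which is a classical primary-decomposition argument for finitely generated modules over the Noetherian ring ${\mathcal P}$.

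For $(3)\Leftrightarrow (4)$, I would apply the long exact sequence \eqref{eq:3312.} with ${\M}$ replaced by ${\mathcal P}/\wp$. Since Lemma \ref{thm:A1.5-1} gives $\Ext_{{\mathcal P}}^{j}({\mathcal P}/\wp, W_{K_{i}}^{(\omega)})=0$ for all $j\geq 1$ and $i=1,2$, this sequence collapses, for each $\wp\in\Ass({\M})$, to
\[
0\longrightarrow \Ext_{{\mathcal P}}^{0}\left({\mathcal P}/\wp,{\I}^{(\omega)}(K_{1},K_{2})\right)\longrightarrow \Ext_{{\mathcal P}}^{0}\left({\mathcal P}/\wp,W_{K_{2}}^{(\omega)}\right)\longrightarrow \Ext_{{\mathcal P}}^{0}\left({\mathcal P}/\wp,W_{K_{1}}^{(\omega)}\right)\longrightarrow 0,
\]
so that the vanishing of the leftmost term is equivalent to the injectivity of the right-hand arrow. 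Finally, $(4)\Leftrightarrow (5)$ is the functional-analytic duality observation recalled in Remark \ref{remG11}: for a continuous linear map between locally convex spaces, injectivity is equivalent to the density of the image of its transpose, by the Hahn--Banach theorem.

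I expect the only delicate point to be ensuring that the topological-vector-space structures on $\Ext_{{\mathcal P}}^{0}({\mathcal P}/\wp, W_{K_{j}}^{(\omega)})$---realized as continuous solution spaces of polynomial differential systems in the Whitney classes---are sufficiently well-behaved to license both the homological input of Lemma \ref{thm:A1.5-1} (proved along the lines of \cite{N1,BN1,BN3}) and the abstract transpose-duality of \cite{Gr}. Once these topological hypotheses are in place, the proof is a purely mechanical assembly of the four equivalences above.
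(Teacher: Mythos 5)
Your route is the same one the paper takes: the paper gives no separate proof of this proposition, but assembles it from Remark~\ref{rem:villa} (identifying uniqueness with $\Ext_{\mathcal P}^{0}(\M,\I^{(\omega)}(K_{1},K_{2}))=0$), the long exact sequence \eqref{eq:3312.} collapsed by Lemma~\ref{thm:A1.5-1}, the transpose duality of Remark~\ref{remG11}, and \cite[Prop.~1.1--1.2]{N2} for the reduction to associated primes. Your four steps reproduce exactly these ingredients.

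There is, however, one genuine misstatement in your step $(3)\Leftrightarrow(4)$. Lemma~\ref{thm:A1.5-1} kills only the terms $\Ext_{\mathcal P}^{j}({\mathcal P}/\wp,W_{K_{i}}^{(\omega)})$ with $j\geq1$, so the sequence \eqref{eq:3312.} with $\M$ replaced by ${\mathcal P}/\wp$ collapses not to your three-term short exact sequence but to the five-term sequence
\[
0\longrightarrow\Ext_{\mathcal P}^{0}\left({\mathcal P}/\wp,\I^{(\omega)}(K_{1},K_{2})\right)\longrightarrow\Ext_{\mathcal P}^{0}\left({\mathcal P}/\wp,W_{K_{2}}^{(\omega)}\right)\longrightarrow\Ext_{\mathcal P}^{0}\left({\mathcal P}/\wp,W_{K_{1}}^{(\omega)}\right)\longrightarrow\Ext_{\mathcal P}^{1}\left({\mathcal P}/\wp,\I^{(\omega)}(K_{1},K_{2})\right)\longrightarrow0.
\]
Your display asserts surjectivity of the restriction map, which is equivalent to $\Ext_{\mathcal P}^{1}({\mathcal P}/\wp,\I^{(\omega)}(K_{1},K_{2}))=0$, i.e.\ to the \emph{existence} statement of Proposition~\ref{prop:esistenza}; that is precisely what the rest of the paper shows to be equivalent to a nontrivial Phragm\'en--Lindel\"of condition, and it is certainly not automatic. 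Fortunately your conclusion uses only left exactness (the kernel of the restriction map is $\Ext_{\mathcal P}^{0}({\mathcal P}/\wp,\I^{(\omega)}(K_{1},K_{2}))$), so the equivalence $(3)\Leftrightarrow(4)$ survives once the displayed sequence is corrected. A smaller point: in $(2)\Leftrightarrow(3)$ the successive quotients of the standard filtration are of the form ${\mathcal P}/\wp_{i}$ for primes $\wp_{i}$ that need not belong to $\Ass(\M)$, only contain associated primes; passing to $\Ass(\M)$ requires the extra observation that each such ${\mathcal P}/\wp_{i}$ is a quotient of ${\mathcal P}/\wp$ for some $\wp\in\Ass(\M)$ (or simply the citation of \cite[Prop.~1.1--1.2]{N2}, as the paper does).
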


\begin{Prop}
  \label{prop:esistenza}
  Let $\omega\in\W'$, $\M$ a unitary ${\mathcal P}-$module of
finite type and $K_{1},\,K_{2}$ closed convex subsets of $\R^N$
with $K_{1}\subsetneq K_{2}$, $\overline{\mathring{K}_{j}}=K_{j}$
for $j=1,\,2$. Then the following statements are equivalent:
\begin{enumerate}
\item
  The Cauchy problem (\ref{eq:sistema whitney}) admits
\textbf{at least }a solution;
\item
$\ds\Ext_{{\mathcal P}}^{1}({\M},{\I}^{(\omega)}(K_{1},K_{2}))=0;$
\item
$\ds\Ext_{{\mathcal P}}^{1}({\mathcal P}\bigl/\wp,{\I}^{(\omega)}(K_{1},K_{2}))=0$
for all $\wp\in\Ass(\M)$;
\item
The homomorphisms
\beqs
\label{G4}
\Ext_{{\mathcal P}}^{0}\left({\mathcal P}\bigl/\wp,W_{K_{2}}^{(\omega)}\right)
\rightarrow\Ext_{{\mathcal P}}^{0}\left({\mathcal P}\bigl/\wp,
W_{K_{1}}^{(\omega)}\right)
\eeqs
are surjective for all $\wp\in\Ass(\M)$;
\item
The homomorphisms
\beqs
\label{eq:9}
\Ext_{{\mathcal P}}^{0}\left({\mathcal P}\bigl/\wp,W_{K_{1}}^{(\omega)}\right)'
\rightarrow\Ext_{{\mathcal P}}^{0}\left({\mathcal P}\bigl/\wp,W_{K_{2}}^{(\omega)}
\right)'
\eeqs
are injective and have a closed image, for all $\wp\in\Ass(\M)$.
\end{enumerate}
\end{Prop}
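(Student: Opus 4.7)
The plan is to establish the chain $(1)\Leftrightarrow(2)\Leftrightarrow(3)\Leftrightarrow(4)\Leftrightarrow(5)$ by combining Remarks~\ref{rem:villa} and \ref{remG11}, the injectivity of $W_{K_j}^{(\omega)}$ from Lemma~\ref{thm:A1.5-1}, and the standard reduction to associated primes. The equivalence $(1)\Leftrightarrow(2)$ is immediate from Remark~\ref{rem:villa}: by construction, the solvability of \eqref{eq:sistema whitney} for every admissible datum $f$ is exactly the statement $\Ext^{1}_{\mathcal P}(\M,\I^{(\omega)}(K_{1},K_{2}))=0$.

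To get $(3)\Leftrightarrow(4)$, I would apply the long exact sequence \eqref{eq:3312.} with $\mathcal P/\wp$ in place of $\M$. Since Lemma~\ref{thm:A1.5-1} gives $\Ext^{j}_{\mathcal P}(\mathcal P/\wp,W_{K_{i}}^{(\omega)})=0$ for $j\geq 1$, the sequence collapses to
\begin{align*}
0 &\longrightarrow \Ext^{0}_{\mathcal P}(\mathcal P/\wp,\I^{(\omega)}(K_{1},K_{2})) \longrightarrow \Ext^{0}_{\mathcal P}(\mathcal P/\wp,W_{K_{2}}^{(\omega)}) \longrightarrow \\
 &\longrightarrow \Ext^{0}_{\mathcal P}(\mathcal P/\wp,W_{K_{1}}^{(\omega)}) \longrightarrow \Ext^{1}_{\mathcal P}(\mathcal P/\wp,\I^{(\omega)}(K_{1},K_{2})) \longrightarrow 0,
\end{align*}
so that $(3)$ is precisely the surjectivity asserted in $(4)$. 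The reduction $(2)\Leftrightarrow(3)$ is then a standard Noetherian argument, cited from \cite{N2}: a finitely generated $\mathcal P$-module $\M$ admits a finite filtration $0=\M_{0}\subset\M_{1}\subset\cdots\subset\M_{n}=\M$ with $\M_{i}/\M_{i-1}\cong \mathcal P/\wp_{i}$, where $\Ass(\M)\subset\{\wp_{i}\}$; iterating the $\Ext$ long exact sequence on the short exact sequences $0\to\M_{i-1}\to\M_{i}\to\mathcal P/\wp_{i}\to 0$, and using \eqref{eq:(7)} to kill the higher $\Ext$ contributions, transports the vanishing back and forth between $\M$ and its $\mathcal P/\wp$-factors.

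Finally, $(4)\Leftrightarrow(5)$ follows the duality argument sketched in Remark~\ref{remG11}. Surjectivity of \eqref{G4} amounts to having both a dense and a closed image. Density of the image of \eqref{G4} is equivalent (by Hahn--Banach) to injectivity of the transpose \eqref{eq:9}, and by Grothendieck's closed-range theorem (Remark~\ref{remG11}) the image of \eqref{G4} is closed if and only if the image of \eqref{eq:9} is closed. Combining these two characterizations yields $(4)\Leftrightarrow(5)$.

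The main obstacle is the functional-analytic step $(4)\Leftrightarrow(5)$: one must first identify $\Ext^{0}_{\mathcal P}(\mathcal P/\wp,W_{K_{j}}^{(\omega)})$ as the kernel of an explicit system of differential operators acting on $(W_{K_{j}}^{(\omega)})^{a_{0}}$, endow it with the induced locally convex topology, and verify that the resulting spaces (Fréchet or Fréchet--Schwartz, with suitably described strong duals) indeed satisfy the hypotheses of Grothendieck's closed-range theorem so that the duality between \eqref{G4} and \eqref{eq:9} can be applied. By contrast, the homological reductions are essentially formal once Lemma~\ref{thm:A1.5-1} and the filtration by primary quotients are in hand.
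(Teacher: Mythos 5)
Your proposal is correct and follows essentially the same route as the paper, which gives no separate proof but simply invokes Remark~\ref{rem:villa}, the collapsed long exact sequence after Lemma~\ref{thm:A1.5-1}, the duality discussion of Remark~\ref{remG11}, and \cite[Prop.~1.1--1.2]{N2} for the reduction to associated primes. The only place where your sketch is looser than it looks is the step $(2)\Leftrightarrow(3)$: the primes occurring in a filtration of $\M$ by quotients ${\mathcal P}/\wp_{i}$ need not all lie in $\Ass(\M)$, so passing from vanishing over $\Ass(\M)$ alone to vanishing for $\M$ genuinely requires the content of \cite[Prop.~1.1--1.2]{N2}, which you (like the paper) correctly cite rather than reprove.
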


\begin{Prop}
  \label{prop:3.3.7}
  Let $\omega\in\W'$, $\M$ a unitary ${\mathcal P}-$module of finite
type and $K_{1},\,K_{2}$ closed convex subsets of $\R^N$ with
$K_{1}\subsetneq K_{2}$, $\overline{\mathring{K}_{j}}=K_{j}$ for
$j=1,\,2$. Then the following statements are equivalent:
\begin{enumerate}
  \item
The Cauchy problem (\ref{eq:sistema whitney}) admists
\textbf{one and only one} solution;
\item
$\ds\Ext_{{\mathcal P}}^{0}({\M},{\I}^{(\omega)}(K_{1},K_{2}))=
\Ext_{{\mathcal P}}^{1}({\M},{\I}^{(\omega)}(K_{1},K_{2}))=0;$
\item
$\ds\Ext_{{\mathcal P}}^{0}({\mathcal P}\bigl/\wp,{\I}^{(\omega)}(K_{1},K_{2}))
=\Ext_{{\mathcal P}}^{1}({\mathcal P}\bigl/\wp,{\I}^{(\omega)}(K_{1},K_{2}))=0$
for all $\wp\in\Ass(\M);$
\item
The homomorphisms
\[
\Ext_{{\mathcal P}}^{0}\left({\mathcal P}\bigl/\wp,W_{K_{2}}^{(\omega)}\right)
\rightarrow\Ext_{{\mathcal P}}^{0}\left({\mathcal P}\bigl/\wp,
W_{K_{1}}^{(\omega)}\right)
\]
are isomorphisms for all $\wp\in\Ass(\M)$;
\item
The homomorphisms
\[
\Ext_{{\mathcal P}}^{0}\left({\mathcal P}\bigl/\wp,W_{K_{1}}^{(\omega)}\right)'
\rightarrow\Ext_{{\mathcal P}}^{0}\left({\mathcal P}\bigl/\wp,W_{K_{2}}^{(\omega)}
\right)'
\]
are isomorphisms for all $\wp\in\Ass(\M)$.
\end{enumerate}
\end{Prop}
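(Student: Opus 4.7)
The plan is to view this proposition as a direct combination of the preceding Proposition on uniqueness (equivalent to $\Ext^0_{\mathcal P}(\M, \I^{(\omega)}(K_1,K_2))=0$) and Proposition~\ref{prop:esistenza} on existence (equivalent to $\Ext^1_{\mathcal P}(\M, \I^{(\omega)}(K_1,K_2))=0$). Thus the equivalence $(1)\Leftrightarrow(2)$ follows immediately by conjunction, since ``one and only one'' is just ``at most one'' together with ``at least one''. Likewise $(2)\Leftrightarrow(3)$ reduces to the same equivalence proved in the two preceding propositions: for a finitely generated $\mathcal P$-module $\M$, the associated primes $\Ass(\M)$ stratify the Ext-vanishing condition so that $\Ext^j_{\mathcal P}(\M, -)=0$ is equivalent to $\Ext^j_{\mathcal P}(\mathcal P/\wp, -)=0$ for all $\wp\in\Ass(\M)$.

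For $(3)\Leftrightarrow(4)$, the idea is to apply $\Hom_{\mathcal P}(\mathcal P/\wp, -)$ to the short exact sequence
\[
0\longrightarrow\I^{(\omega)}(K_{1},K_{2})\longrightarrow W_{K_{2}}^{(\omega)}\longrightarrow W_{K_{1}}^{(\omega)}\longrightarrow 0
\]
and invoke Lemma~\ref{thm:A1.5-1}, which asserts that $W_{K_i}^{(\omega)}$ are injective $\mathcal P$-modules. The resulting long exact Ext-sequence collapses to
\[
0\longrightarrow\Ext^0_{\mathcal P}(\mathcal P/\wp,\I^{(\omega)}(K_1,K_2))\longrightarrow\Ext^0_{\mathcal P}(\mathcal P/\wp,W^{(\omega)}_{K_2})\longrightarrow\Ext^0_{\mathcal P}(\mathcal P/\wp,W^{(\omega)}_{K_1})\longrightarrow\Ext^1_{\mathcal P}(\mathcal P/\wp,\I^{(\omega)}(K_1,K_2))\longrightarrow 0,
\]
exactly as in (\ref{eq:(7)}). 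From this exactness one reads off that both $\Ext^0$ and $\Ext^1$ of $\I^{(\omega)}(K_1,K_2)$ vanish simultaneously if and only if the middle arrow is both injective and surjective, that is, an isomorphism.

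Finally, $(4)\Leftrightarrow(5)$ follows from the duality discussion in Remark~\ref{remG11}: injectivity of the direct homomorphism corresponds to dense image of the transpose, while surjectivity, combined via the Grothendieck closed-image theorem cited there, corresponds to the transpose having closed image and being injective. Putting these two together, the direct map is an isomorphism precisely when the transpose is, which gives the equivalence with (5). The only substantive points are therefore the algebraic reduction to primes $\wp\in\Ass(\M)$ and the use of injectivity of $W^{(\omega)}_{K_i}$; I do not expect any genuine obstacle beyond citing these two facts and the duality result, as all the hard analytic content has already been packaged into Lemma~\ref{thm:A1.5-1} and the previous two propositions.
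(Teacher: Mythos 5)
Your proof is correct and follows essentially the same route as the paper, which presents this proposition as an immediate consequence of Remark~\ref{rem:villa}, the collapsed long exact sequence \eqref{eq:3312.} via Lemma~\ref{thm:A1.5-1}, the duality discussion of Remark~\ref{remG11}, and the reduction to associated primes cited from \cite[Prop.~1.1-1.2]{N2}. You have simply made explicit the conjunction of the two preceding propositions on uniqueness and existence, which is exactly what the paper intends.
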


The overdetermined Cauchy
problem (\ref{eq:sistema whitney}) is thus reduced to the study of the dual
homomorphism 
\beqsn
\Ext_{{\mathcal P}}^{0}\left({\mathcal P}\bigl/\wp,W_{K_{1}}^{(\omega)}\right)'
\rightarrow\Ext_{{\mathcal P}}^{0}\left({\mathcal P}\bigl/\wp,W_{K_{2}}^{(\omega)}
\right)',\,\,\,\,\,\,\,\,\,\,\,\wp\in\Ass(\M).
\eeqsn

Let us start with some preliminary results.

\begin{Lemma}
\label{lem:Lemma1}
Let $\omega\in\W'$ and $K$ a convex and closed subset of $\R^N$,
with $\overline{\mathring{K}}=K$. Then
\[
\left(W_{K}^{(\omega)}\right)'\simeq\E_{(\omega)}'(K).
\]
\end{Lemma}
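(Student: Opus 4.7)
The plan is to identify $(W_{K}^{(\omega)})'$ with the annihilator of $\I^{(\omega)}(K,\Omega)$ inside $\E_{(\omega)}'(\Omega)$, and then to show that this annihilator is precisely $\E_{(\omega)}'(K)$, the space of $\omega$-ultradistributions on $\Omega$ supported in $K$. First, $\I^{(\omega)}(K,\Omega)$ is a closed subspace of $\E_{(\omega)}(\Omega)$, being the common zero set of the continuous seminorms $f\mapsto\sup_{x\in L\cap K}|D^{\alpha} f(x)|$ as $\alpha$ and compact $L\subset\Omega$ vary. Hence $W_{K}^{(\omega)}$ is a Hausdorff locally convex space under the quotient topology, and the standard duality of quotient spaces yields
$$
(W_{K}^{(\omega)})' \;\simeq\; \I^{(\omega)}(K,\Omega)^{\perp} := \bigl\{T\in\E_{(\omega)}'(\Omega) : \langle T,f\rangle = 0 \text{ for all } f\in\I^{(\omega)}(K,\Omega)\bigr\}.
$$

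The easy inclusion $\I^{(\omega)}(K,\Omega)^{\perp}\subset\E_{(\omega)}'(K)$ I would prove first. Given $T$ in this annihilator and any $x_{0}\in\Omega\setminus K$, closedness of $K$ supplies an open neighborhood $U$ of $x_{0}$ disjoint from $K$; every test function $\varphi\in\D_{(\omega)}(U)$ automatically lies in $\I^{(\omega)}(K,\Omega)$, so $\langle T,\varphi\rangle=0$, whence $x_{0}\notin\supp T$. Thus $\supp T\subset K$.

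The reverse inclusion $\E_{(\omega)}'(K)\subset\I^{(\omega)}(K,\Omega)^{\perp}$ is the only real obstacle, and I would handle it by recycling the rescaling argument from the first half of the proof of Theorem~\ref{thm:prop 7.2-7.3}. Assume $T\in\E_{(\omega)}'(\Omega)$ has $\supp T\subset K$ and fix $f\in\I^{(\omega)}(K,\Omega)$. Using $\overline{\mathring{K}}=K$, translate so that $0\in\mathring{K}$, and define $f_{t}(x):=f(tx)$ for $t\in(0,1)$. Convexity of $K$ together with the existence of a ball $B_{r}(0)\subset K$ forces $tx\in\mathring{K}$ for every $x\in K$ and $t\in(0,1)$, so $f_{t}$ vanishes on an open neighborhood of $K$, hence on a neighborhood of $\supp T$, giving $\langle T,f_{t}\rangle = 0$ for all $t\in(0,1)$. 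The Lagrange-type computation written out in the proof of Theorem~\ref{thm:prop 7.2-7.3} then shows $f_{t}\to f$ in $\E_{(\omega)}(\Omega)$ as $t\to 1^{-}$, so continuity of $T$ forces $\langle T,f\rangle=0$. Combining the two inclusions gives the claimed isomorphism.
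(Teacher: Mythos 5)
Your proof is correct, but it takes a genuinely different route from the paper's. The paper's argument is essentially a citation: it invokes the Whitney extension machinery of \cite{BBMT} (Cor.~4.7) and \cite{MT} (Prop.~3.6) to characterize the elements $\mu$ of $\left(W_{K}^{(\omega)}\right)'$ by the Fourier--Laplace estimate $|\hat{\mu}(\zeta)|\leq Ce^{H_{K}(\Im\zeta)+\lambda\omega(\zeta)}$, and then matches this with the Paley--Wiener Theorem~\ref{thm:prop 7.2-7.3} to conclude $\mu\in\E_{(\omega)}'(K)$. You instead exploit directly that $W_{K}^{(\omega)}$ is \emph{defined} as the quotient $\E_{(\omega)}(\Omega)\bigl/{\I}^{(\omega)}(K,\Omega)$: quotient duality identifies its dual with the annihilator of ${\I}^{(\omega)}(K,\Omega)$, the inclusion of that annihilator into $\E_{(\omega)}'(K)$ is immediate from the definition of support, and the reverse inclusion is the dilation argument $f_{t}(x)=f(tx)$, which is literally the first half of the paper's own proof of Theorem~\ref{thm:prop 7.2-7.3} (with the observation, worth writing out, that $U=\frac{1}{t}\mathring{K}$ is an open neighborhood of $K$ with $tU\subset\mathring{K}$, so $f_{t}$ vanishes on a neighborhood of $\supp T$). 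This is more elementary and self-contained --- it avoids the deep extension theorems of \cite{BBMT} and \cite{MT} altogether, precisely because the quotient definition makes extension a non-issue; what the paper's route buys is the explicit Fourier-side description of the dual in one step, but since that description is rederived from the Paley--Wiener theorem in Lemma~\ref{lem:lemma2} anyway, nothing downstream is lost. Two small points you use implicitly and should flag: every $T\in\E_{(\omega)}'(\Omega)$ already has compact support by Proposition~\ref{prop:5.3}, so ``$\supp T\subset K$'' does place $T$ in $\E_{(\omega)}'(K)$ even for unbounded $K$; and the step ``$f_{t}$ vanishes near $\supp T$ implies $\langle T,f_{t}\rangle=0$'' requires a cut-off in $\D_{(\omega)}$ equal to $1$ near $\supp T$, which exists by non-quasianalyticity (condition $(\beta)$).
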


\begin{proof}
If $K$ is compact with $\mathring{K}\not=\emptyset$ and $0\in\mathring{K}$,
then by \cite[Cor. 4.7]{BBMT} and \cite[Prop. 3.6]{MT}
we have that $\mu\in\left(W_{K}^{(\omega)}\right)'$ if and only if
there exist $\lambda,\,C>0$ such that
\begin{equation}
|\hat{\mu}(\zeta)|\leq Ce^{\left(H_{K}(\Im\zeta)+\lambda\omega(\zeta)\right)}
\,\,\,\,\,\,\,\,\,\forall\zeta\in\mathbb{C}^{N}.
\label{eq:stella-1}
\end{equation}
By the Paley-Wiener Theorem \ref{thm:prop 7.2-7.3}, this is equivalent
to
\[
\mu\in\E_{(\omega)}'(K).
\]
Therefore the lemma is proved 
(cf. also \cite{M}, \cite{N1}).
\end{proof}

\begin{Lemma}
\label{lem:lemma2}
Let $\omega\in\W'$, $\wp$ a prime ideal of $\mathcal P$ and
$K\subset\R^N$ a convex and closed set with 
$\overline{\mathring{K}}=K$. Then
we have the following isomorphism: 
\begin{equation}
\Ext_{{\mathcal P}}^{0}\left({\mathcal P}\bigl/\wp,W_{K}^{(\omega)}\right)'
\simeq\E_{(\omega)}'(K)\bigl/\wp(D)\otimes\E_{(\omega)}'(K)
\label{eq:ISOMORFISMO}
\end{equation}
with 
\[
\wp(D)\otimes\E_{(\omega)}'(K):=\left\{\sum_{h=1}^{r}p_{h}(D)T_{h}:\,\,
T_{h}\in\E_{(\omega)}'(K)\right\} ,
\]
where $p_{1}(\zeta),\ldots,p_{r}(\zeta)$ are generators of $\wp$.
\end{Lemma}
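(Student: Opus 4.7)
The plan is to read $\Ext^0$ off a finite free presentation of ${\mathcal P}/\wp$ as a kernel, and then dualize. Fix generators $p_{1},\ldots,p_{r}$ of $\wp$ and consider the presentation
\[
{\mathcal P}^{r} \xrightarrow{\,A\,} {\mathcal P} \longrightarrow {\mathcal P}/\wp \longrightarrow 0,
\qquad A(q_{1},\ldots,q_{r}):=\sum_{h=1}^{r}q_{h}p_{h}.
\]
Applying $\mathrm{Hom}_{\mathcal P}(-,W_{K}^{(\omega)})$ and recalling that $p(\zeta)$ acts on $W_{K}^{(\omega)}$ as $p(D)$, I would identify
\[
\Ext_{\mathcal P}^{0}\bigl({\mathcal P}/\wp,\,W_{K}^{(\omega)}\bigr) = \ker\bigl(P(D)\colon W_{K}^{(\omega)}\to (W_{K}^{(\omega)})^{r}\bigr),
\]
where $P(D)u:=(p_{1}(D)u,\ldots,p_{r}(D)u)$; this is a closed topological submodule of $W_{K}^{(\omega)}$.

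Next I would invoke the standard duality principle for a continuous linear map $P\colon E\to F$ between reflexive locally convex spaces: by the bipolar theorem, $(\ker P)^{\perp}$ in $E'$ is the closure of the image of the transpose ${}^{t}P\colon F'\to E'$, whence $(\ker P)'\simeq E'\big/\overline{{}^{t}P(F')}$. Since $W_{K}^{(\omega)}$ is a nuclear Fr\'echet space (as a quotient of $\E_{(\omega)}(\Omega)$ by the closed subspace $\I^{(\omega)}(K,\Omega)$), hence reflexive, this yields
\[
\bigl(\ker P(D)\bigr)' \;\simeq\; W_{K}^{(\omega)}{}'\Big/\overline{\,{}^{t}P(D)\bigl((W_{K}^{(\omega)}{}')^{r}\bigr)\,}.
\]
The transpose ${}^{t}P(D)\colon (W_{K}^{(\omega)}{}')^{r}\to W_{K}^{(\omega)}{}'$ is given by $(T_{1},\ldots,T_{r})\mapsto\sum_{h=1}^{r}p_{h}(D)T_{h}$, compatibly with the ${\mathcal P}$-module structure on the dual. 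Substituting $W_{K}^{(\omega)}{}'\simeq \E_{(\omega)}'(K)$ via Lemma~\ref{lem:Lemma1}, the image of ${}^{t}P(D)$ is then exactly $\wp(D)\otimes \E_{(\omega)}'(K)$ as defined in the statement.

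What is left is to remove the closure, i.e.\ to show that $\wp(D)\otimes \E_{(\omega)}'(K)$ is already closed in $\E_{(\omega)}'(K)$; this is the main obstacle and represents a topological upgrade of the algebraic injectivity of Lemma~\ref{thm:A1.5-1}. I would derive it by first establishing closed range of $P(D)$ in $(W_{K}^{(\omega)})^{r}$ and then invoking the closed-range theorem for reflexive Fr\'echet spaces to transfer closedness to ${}^{t}P(D)$. For closed range of $P(D)$ I would split the presentation into the short exact sequences ${\mathcal P}^{r}\to\wp\to 0$ and $0\to\wp\hookrightarrow{\mathcal P}\to{\mathcal P}/\wp\to 0$, apply $\mathrm{Hom}_{\mathcal P}(-,W_{K}^{(\omega)})$ to each using the injectivity of $W_{K}^{(\omega)}$, and realize the range of $P(D)$ as the kernel of a continuous linear map from $(W_{K}^{(\omega)})^{r}$ into $\mathrm{Hom}_{\mathcal P}(\ker A,\,W_{K}^{(\omega)})$, in the spirit of \cite{N1,N2}. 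Granting this closedness, the desired isomorphism
\[
\Ext_{\mathcal P}^{0}\bigl({\mathcal P}/\wp,\,W_{K}^{(\omega)}\bigr)' \;\simeq\; \E_{(\omega)}'(K)\big/\wp(D)\otimes \E_{(\omega)}'(K)
\]
follows immediately.
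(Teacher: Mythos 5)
Your proposal is correct, but at the decisive step it takes a genuinely different route from the paper. Both arguments start identically: $\Ext_{\mathcal P}^{0}({\mathcal P}/\wp,W_{K}^{(\omega)})$ is the kernel of $u\mapsto(p_{1}(D)u,\ldots,p_{r}(D)u)$, a closed subspace of the Fr\'echet space $W_{K}^{(\omega)}$, so by Hahn--Banach and Lemma~\ref{lem:Lemma1} its dual is $\E_{(\omega)}'(K)$ modulo the annihilator of that kernel. The divergence is in showing that this annihilator equals $\wp(D)\otimes\E_{(\omega)}'(K)$. The paper does it by hard analysis: for $T$ in the annihilator, $\hat T$ vanishes on $V(\wp)$ because the exponentials $e^{-i\langle\cdot,\zeta\rangle}$, $\zeta\in V(\wp)$, lie in the kernel; the Nullstellensatz gives $\hat T=\sum_{h}p_{h}(-\zeta)F_{h}$ with $F_{h}$ entire, and the Ehrenpreis--H\"ormander division theorem with bounds \cite[Thm.~7.7.13]{H3} lets one choose the $F_{h}$ with the Paley--Wiener growth of $\E_{(\omega)}'(K)$, condition $(\gamma)'$ absorbing the polynomial loss $m'\log(1+|\zeta|)$; closedness of $\wp(D)\otimes\E_{(\omega)}'(K)$ then comes for free, since it is exhibited as an annihilator. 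You instead get the same identification by soft duality --- bipolar theorem plus the closed-range theorem for Fr\'echet spaces --- reducing everything to the closed range of $P(D)$, which you extract from the first syzygy of $\wp$ together with $\Ext_{\mathcal P}^{1}({\mathcal P}/\wp,W_{K}^{(\omega)})=0$ from Lemma~\ref{thm:A1.5-1}: the image of $P(D)$ is then exactly the kernel of the relation matrix $B(D)$ acting on $(W_{K}^{(\omega)})^{r}$, hence closed, and reflexivity of the nuclear Fr\'echet space $W_{K}^{(\omega)}$ upgrades weak-$*$ to strong closedness of the transposed image. Your sign bookkeeping for ${}^{t}P(D)$ is consistent with the paper's conventions (the dual module action satisfies $\widehat{p_{h}(D)T_{h}}(\zeta)=p_{h}(-\zeta)\widehat{T_{h}}(\zeta)$, matching the definition of $V(\wp)$ via $p(-\zeta)=0$). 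What each approach buys: yours avoids re-running the division-with-bounds argument and never touches $(\gamma)'$, at the price of leaning on the closed-range theorem and on the injectivity Lemma~\ref{thm:A1.5-1}, which the paper imports from \cite{N1} and whose proof ultimately rests on the same Fundamental Principle; the paper's computation is self-contained given its Paley--Wiener theorems and makes explicit where $(\gamma)'$ enters.
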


\begin{proof}
For any closed subspace $F$ of a Fr\'echet space $E$, the dual $F'$
of $F$ is isomorphic (cf. \cite[Prop.~6.14]{MV}) to:
\[
F'\simeq E'\bigl/F^{\text{0}},
\]
where $F^{0}$ is the annihilator of $F$, defined by
\[
F^{0}:=\left\{ T\in E':\,T(f)=0,\,\,\,\forall f\in F\right\} .
\]
Then, since 
$\Ext_{{\mathcal P}}^{0}\left({\mathcal P}\bigl/\wp,W_{K}^{(\omega)}\right)$
is a closed subspace of the Fr\'echet space $W_{K}^{(\omega)}$, we
have
\begin{align*}
\Ext_{{\mathcal P}}^{0}\left({\mathcal P}\bigl/\wp,W_{K}^{(\omega)}\right)' 
& \simeq\left(W_{K}^{(\omega)}\right)'\bigl/\left(\Ext_{{\mathcal P}}^{0}
\left({\mathcal P}\bigl/\wp,W_{K}^{(\omega)}\right)\right)^{0},
\end{align*}
and, by Lemma \ref{lem:Lemma1},
\beqs
\label{G6}
\Ext_{{\mathcal P}}^{0}\left({\mathcal P}\bigl/\wp,W_{K}^{(\omega)}\right)' 
& \simeq\E_{(\omega)}'(K)\bigl/\left(\Ext_{{\mathcal P}}^{0}\left({\mathcal P}
\bigl/\wp,W_{K}^{(\omega)}\right)\right)^{0},
\eeqs
with
\[
\left(\Ext_{{\mathcal P}}^{0}\left({\mathcal P}\bigl/\wp,W_{K}^{(\omega)}\right)
\right)^{0}=\left\{ T\in\E_{(\omega)}'(K):\,\,T(u)=0,\,\,\,
\forall u\in\Ext_{{\mathcal P}}^{0}\left({\mathcal P}\bigl/\wp,W_{K}^{(\omega)}
\right)\right\} .
\]

Observe that, for $V(\wp)$ defined as in (\ref{Vwp}), we have
\beqsn
V=V(\wp)=\{\zeta\in\C^N:\ p_h(-\zeta)=0,\ \forall h=1,\ldots,r\}
\eeqsn
and 
\[
p_{h}(D_{x})e^{-i<x,\zeta>}=p_{h}(-\zeta)e^{-i<x,\zeta>}=0
\,\,\,\,\,\,\,\,\,\,\forall\zeta\in V(\wp).
\]
But
\begin{align*}
\Ext_{{\mathcal P}}^{0}\left({\mathcal P}\bigl/\wp,W_{K}^{(\omega)}\right) 
& =\Ker A_{0}(D)\\
 & =\left\{ u\in W_{K}^{(\omega)}:\,p_{h}(D)u=0\,\,\,\forall h=1,\ldots,r\right\},
\end{align*}
so that
\begin{equation}
e^{-i<\cdot,\zeta>}\in\Ext_{{\mathcal P}}^{0}\left({\mathcal P}\bigl/\wp,
W_{K}^{(\omega)}\right)\Leftrightarrow\zeta\in V(\wp).
\label{eq:16}
\end{equation}

Therefore the Fourier-Laplace transform $\hat{T}(\zeta)$ of an element
$T\in\left(\Ext_{{\mathcal P}}^{0}\left({\mathcal P}\bigl/\wp,W_{K}^{(\omega)}
\right)\right)^{0}$
is an entire function which satisfies:
\[
\hat{T}(\zeta)=\left\langle T,e^{-i<\cdot,\zeta>}\right\rangle =0
\,\,\,\,\,\,\,\,\,\,\,\forall\zeta\in V(\wp).
\]

By the Nullstellensatz (see \cite{T}), there exist entire functions
$F_{1}(\zeta),\ldots,F_{r}(\zeta)$ such that 
\[
\hat{T}(\zeta)=\sum_{h=1}^{r}p_{h}(-\zeta)F_{h}(\zeta)
\,\,\,\,\,\,\,\,\,\forall\zeta\in\mathbb{C}^{N}.
\]

By the Paley-Wiener Theorem \ref{thm:prop 7.2-7.3}, the Fourier-Laplace
transform of a distribution $T\in\E_{(\omega)}'(K)$ is characterized by an
estimate of the form
\begin{equation}
\left|\hat{T}(\zeta)\right|\leq Ce^{H_{\sigma_{T}}(\Im\zeta)+\alpha\omega(\zeta)},
\label{eq:(10)}
\end{equation}
for some $C>0$, $\alpha\in\N$,
where $\sigma_{T}\subset K$ is the convex hull of $\supp T$.

If $K$ is not compact, we choose $K_{\alpha}\subset\mathring{K}_{\alpha+1}$
compact and such that $K=\underset{\alpha}{\cup}K_{\alpha}$, while
if $K$ is compact, we choose $K_{\alpha}=K$ for all $\alpha$.

Since $\sigma_{T}\subset K_{\alpha}$ for some $\alpha$, then (\ref{eq:(10)})
implies that there exist $C>0$, $\alpha\in\N$ such that
\begin{equation}
\left|\hat{T}(\zeta)\right|\leq Ce^{H_{K_{\alpha}}(\Im\zeta)+\alpha\omega(\zeta)}.
\label{eq:10'}
\end{equation}
Define
\[
\psi_{\alpha}(\zeta):=H_{K_{\alpha}}(\Im\zeta)+\alpha\omega(\zeta);
\]
since $\omega$ is plurisubharmonic by condition $(\delta)$, then
$\psi_{\alpha}(\zeta)$ is plurisubharmonic in $\mathbb{C}^{N}$.

Moreover we have that for every $k_{0}>0$ there exists $k_{1}>0$
such that 
\begin{equation}
\left|\psi_{\alpha}(\Im z+\Im\zeta)-\psi_{\alpha}(\Im\zeta)\right|
\leq k_{1}\,\,\,\,\,\,\,\,\,\,\,\,\,\,\,\,{\rm for}\,\,
|z|\leq k_{0}.
\label{eq:marco}
\end{equation}

Indeed,
\begin{align*}
\left|\psi_{\alpha}(z+\zeta)-\psi_{\alpha}(\zeta)\right| & =
\left|H_{K}(\Im z+\Im\zeta)+\alpha\omega(z+\zeta)-H_{K}(\Im\zeta)-
\alpha\omega(\zeta)\right|\\
 & \leq\left|H_{K}(\Im z+\Im\zeta)-H_{K}(\Im\zeta)\right|+\alpha
\left|\omega(z+\zeta)-\omega(\zeta)\right|.
\end{align*}
Now observe that 
\begin{align*}
H_{K}(\Im z+\Im\zeta)-H_{K}(\Im\zeta) & \leq H_{K}(\Im z)+H_{K}(\Im\zeta)
-H_{K}(\Im\zeta)\leq c\,\,\,\,\,\,\,\,\,|z|\leq k_{0},
\end{align*}
for some $c>0$ and
\begin{align}
H_{K}(\Im\zeta)-H_{K}(\Im z+\Im\zeta) & \leq H_{K}(\Im\zeta)
-\left\langle x,\Im z\right\rangle -\left\langle x,\Im\zeta\right\rangle 
\,\,\,\,\,\,\,\,\,\forall x\in K.
\label{eq:1.*-1}
\end{align}

Moreover, by definition of supremum, for all $\varepsilon>0$ there exists
$\bar{x}\in K$ such that
\[
\left\langle \bar{x},\Im\zeta\right\rangle >H_{K}(\Im\zeta)-\varepsilon.
\]
So, choosing such $\bar{x}$ in (\ref{eq:1.*-1}) we have
\[
H_{K}(\Im\zeta)-H_{K}(\Im z+\Im\zeta)\leq\varepsilon+c',
\qquad\mbox{if}\ |z|\leq k_{0},
\]
for some $c'>0$, hence there exists $k_{1}>0$ such that
\[
\left|H_{K}(\Im z+\Im\zeta)-H_{K}(\Im\zeta)\right|\leq k_{1},
\qquad|z|\leq k_{0}.
\]

Furthermore, by Lemma~\ref{lemma12BMT} we have that
\[
\omega(z+\zeta)\leq K(1+\omega(z)+\omega(\zeta)),
\]
for some $K>0$ and hence for every $k_{0}>0$ there exists $k'_{1}>0$
such that 
\[
\left|\omega(z+\zeta)-\omega(\zeta)\right|\leq k'_{1},
\,\,\,\,\,\,\,\,\,\,\,|z|\leq k_{0}.
\]
Therefore (\ref{eq:marco}) is proved. 

We can therefore apply the Ehrenpreis Fundamental Theorem (see
\cite[Thm. 7.7.13]{H3}, and \cite{B,G} for more details)
and obtain that we can choose the entire functions $F_{h}$ satisfying
\begin{align*}
\left|F_{h}(\zeta)\right| & \leq C'e^{H_{K_{\alpha}}(\Im\zeta)+\alpha\omega(\zeta)
+m'\log(1+|\zeta|)},
\end{align*}
for some $C'>0$, $m'\in\N$.

By condition $\left(\gamma\right)'$
\[
m'\log(1+|\zeta|)\leq\frac{m'}{b}\omega(\zeta)-\frac{m'a}{b},
\]
so there exist $C'',\,C'''>0$ and $\alpha'\in\N$ such that 
\begin{align*}
\left|F_{h}(\zeta)\right| & \leq C''e^{H_{K_{\alpha}}(\Im\zeta)+\alpha'\omega(\zeta)}
\leq C'''e^{H_{K_{\alpha''}}(\Im\zeta)+\alpha''\omega(\zeta)}
\end{align*}
with $\alpha''=\max\{\alpha,\alpha'\}$.

Hence, by the Paley-Wiener Theorem \ref{thm:prop 7.2-7.3}:
\[
F_{h}=\widehat{T_{h}}
\]
for some $T_{h}\in\E_{(\omega)}'(K)$.

We have thus proved that if $T\in\left(\Ext_{{\mathcal P}}^{0}\left({\mathcal P}
\bigl/\wp,W_{K}^{(\omega)}\right)\right)^{0}$,
then
\begin{align*}
\hat{T}(\zeta)  =\sum_{h=1}^{r}p_{h}(-\zeta)\widehat{T_{h}}(\zeta)
  =\sum_{h=1}^{r}\widehat{p_{h}(D)T_{h}(\zeta)},\qquad
\mbox{with }\,\,T_{h}\in\E_{(\omega)}'(K).
\end{align*}

This result implies that
\[
T\in\wp(D)\otimes\E_{(\omega)}'(K),
\]
and so, by \eqref{G6},
\[
\left(\Ext_{{\mathcal P}}^{0}\left({\mathcal P}\bigl/\wp,W_{K}^{(\omega)}\right)
\right)'\simeq\E_{(\omega)}'(K)\bigl/\wp(D)\otimes\E_{(\omega)}'(K).
\]
\end{proof}

Let us define ${\mathcal O}_{\psi_{\alpha}}(\mathbb{C}^{N})$ as the space
of holomorphic functions $u$ on $\mathbb{C}^{N}$ which satisfy for
some $C>0$ and for all $\zeta\in\mathbb{C}^{N}:$
\begin{equation}
\left|u(\zeta)\right|\leq Ce^{\psi_{\alpha}(\zeta)}=
Ce^{H_{K_{\alpha}}(\Im\zeta)+\alpha\omega(\zeta)}.
\label{eq:12}
\end{equation}

We can then consider the inductive limit 
\[
{\mathcal O}_{\psi}(\mathbb{C}^{N}):=\underset{\alpha
\rightarrow\infty}{\indlim}{\mathcal O}_{\psi_{\alpha}}(\mathbb{C}^{N}).
\]

From the Paley-Wiener Theorem \ref{thm:prop 7.2-7.3}, by Fourier-Laplace
transform we have the following isomorphism:
\[
\E_{(\omega)}'(K)\simeq{\mathcal O}_{\psi}(\mathbb{C}^{N}).
\]

Therefore, from Lemma \ref{lem:lemma2}: 
\beqs
\label{G7}
\Ext_{{\mathcal P}}^{0}\left({\mathcal P}\bigl/\wp,W_{K}^{(\omega)}\right)'
\simeq{\mathcal O}_{\psi}(\mathbb{C}^{N})\bigl/\wp(D)
\otimes{\mathcal O}_{\psi}(\mathbb{C}^{N}).
\eeqs

Let $V$ be a reduced affine algebraic variety. Denote by 
${\mathcal O}_{\psi_{\alpha}}(V)$
the space of holomorphic functions on $V$ (i.e. complex valued continuous
functions on $V$ which are restrictions of entire functions on $\mathbb{C}^{N})$
that satisfy (\ref{eq:12}) for some $\alpha\in\N$, $C>0$ and for
all $\zeta\in V$. Consider then the inductive limit 
\[
{\mathcal O}_{\psi}(V)=
\underset{\alpha\rightarrow\infty}{\indlim}{\mathcal O}_{\psi_{\alpha}}(V).
\]
We have the following:

\begin{Prop}
\label{lem:LEMMA3}
Let $\omega\in\W'$, $\wp$ a prime ideal of ${\mathcal P}$ with
associated algebraic variety $V=V(\wp)$, and $K$ a closed convex
subset of $\R^N$ with $\overline{\mathring{K}}=K$. Then we have a
natural isomorphism:
\[
\Ext_{{\mathcal P}}^{0}\left({\mathcal P}\bigl/\wp,W_{K}^{(\omega)}\right)'
\simeq{\mathcal O}_{\psi}(V).
\]
\end{Prop}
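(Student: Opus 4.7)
The plan is to exhibit the desired isomorphism by composing \eqref{G7} with the restriction map to $V$. First I would define
$R:\mathcal O_\psi(\mathbb C^N)\to\mathcal O_\psi(V)$, $u\mapsto u|_V$, which is clearly well-defined since any bound of the form $|u(\zeta)|\leq Ce^{\psi_\alpha(\zeta)}$ passes unchanged to $V$. Since $V=V(\wp)=\{\zeta\in\mathbb C^N:p_h(-\zeta)=0,\ h=1,\ldots,r\}$, any element of $\wp(D)\otimes\mathcal O_\psi(\mathbb C^N)$, i.e.\ a finite sum $\sum_h p_h(-\zeta)F_h(\zeta)$, lies in $\Ker R$. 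Hence $R$ descends to
\[
\bar R:\mathcal O_\psi(\mathbb C^N)\bigl/\wp(D)\otimes\mathcal O_\psi(\mathbb C^N)\longrightarrow\mathcal O_\psi(V),
\]
and by \eqref{G7} it suffices to show that $\bar R$ is an isomorphism.

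For injectivity, suppose $u\in\mathcal O_\psi(\mathbb C^N)$ satisfies $u|_V\equiv 0$. The Nullstellensatz (see \cite{T}) yields entire functions $F_1,\ldots,F_r$ with $u(\zeta)=\sum_{h=1}^r p_h(-\zeta)F_h(\zeta)$. The delicate point is to choose the $F_h$ inside $\mathcal O_\psi(\mathbb C^N)$. This is precisely the content of the Ehrenpreis Fundamental Theorem, as already invoked in the proof of Lemma~\ref{lem:lemma2}: the weight family $\{\psi_\alpha\}$ is plurisubharmonic by $(\delta)$ and satisfies the Lipschitz-type estimate \eqref{eq:marco}, so the $F_h$ can be chosen to satisfy a bound of the form $|F_h(\zeta)|\leq C' e^{H_{K_\alpha}(\Im\zeta)+\alpha\omega(\zeta)+m'\log(1+|\zeta|)}$. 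Condition $(\gamma)'$ then absorbs the logarithmic term into $\omega$, giving $F_h\in\mathcal O_\psi(\mathbb C^N)$ (with a possibly larger index), so that $u\in\wp(D)\otimes\mathcal O_\psi(\mathbb C^N)$.

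For surjectivity, given $v\in\mathcal O_\psi(V)$, one must produce an entire extension $u\in\mathcal O_\psi(\mathbb C^N)$ with $u|_V=v$, satisfying a bound of the same type (possibly with a larger $\alpha$). Here lies the main technical obstacle: one needs an extension theorem for holomorphic functions with plurisubharmonic weight bounds from an affine algebraic variety to $\mathbb C^N$. I would appeal once more to the Ehrenpreis Fundamental Theorem in the form of \cite[Thm. 7.7.13]{H3} (see also \cite{B,G}), whose hypotheses for the family $\{\psi_\alpha\}$ have already been verified in the proof of Lemma~\ref{lem:lemma2} via \eqref{eq:marco}. This produces the required entire extension $u\in\mathcal O_\psi(\mathbb C^N)$, yielding $\bar R([u])=v$ and completing the proof of the isomorphism.
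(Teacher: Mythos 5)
Your proposal is correct and follows essentially the same route as the paper: reduce via \eqref{G7} to showing that restriction induces an isomorphism $\mathcal O_\psi(\mathbb C^N)\bigl/\wp(D)\otimes\mathcal O_\psi(\mathbb C^N)\simeq\mathcal O_\psi(V)$, prove injectivity by the Nullstellensatz plus the Ehrenpreis Fundamental Theorem (with $(\gamma)'$ absorbing the $\log(1+|\zeta|)$ term), and prove surjectivity by the same theorem to obtain an entire extension with the required $\psi_{\alpha'}$-bound. The only cosmetic difference is that in the paper an element of $\mathcal O_\psi(V)$ is by definition already the restriction of some entire function, so the Ehrenpreis theorem is invoked only to produce an extension with controlled growth, exactly as you indicate.
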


\begin{proof}
  By \eqref{G7} we have to prove the following isomorphism:
\[
{\mathcal O}_{\psi}(\mathbb{C}^{N})\bigl/\wp(D)\otimes{\mathcal O}_{\psi}
(\mathbb{C}^{N})
\simeq{\mathcal O}_{\psi}(V).
\]

First of all we prove that the homomorphism 
\begin{equation}
{\mathcal O}_{\psi}(\mathbb{C}^{N})\bigl/\wp(D)\otimes{\mathcal O}_{\psi}
(\mathbb{C}^{N})
\rightarrow{\mathcal O}_{\psi}(V)
\label{eq:omomo}
\end{equation}
is injective: if $f\in{\mathcal O}_{\psi}(\mathbb{C}^{N})$ is zero on
$V$, then by the Nullstellensatz there exist entire functions $f_{h}$ on
$\mathbb{C}^{N}$ such that
\[
f(\zeta)=\sum_{h=1}^{r}p_{h}(-\zeta)f_{h}(\zeta)
\,\,\,\,\,\,\,\,\,\forall\zeta\in\mathbb{C}^{N}.
\]

Since $f$ satisfies (\ref{eq:12}) by assumption, from the Ehrenpreis 
Fundamental Theorem \cite[Thm. 7.7.13]{H3} (see also \cite{B}, \cite{G}
for more details), we can choose $f_{h}$ satisfying (\ref{eq:12})
too, hence $f_{h}\in{\mathcal O}_{\psi}(\mathbb{C}^{N})$ and this implies
that $f\in\wp\otimes{\mathcal O}_{\psi}(\mathbb{C}^{N})$. So we have
obtained that $f$ is the zero element of 
${\mathcal O}_{\psi}(\mathbb{C}^{N})\bigl/
\wp(D)\otimes{\mathcal O}_{\psi}(\mathbb{C}^{N})$,
proving the injectivity of the homomorphism (\ref{eq:omomo}).

On the other hand, the homomorphism (\ref{eq:omomo}) is surjective:
if $f\in{\mathcal O}_{\psi}(V)$, then $f\in{\mathcal O}(\mathbb{C}^{N})$
and satisfies (\ref{eq:12}) for some $\alpha\in\N$, $C>0$ and for
all $\zeta\in V$. By the Ehrenpreis Fundamental Theorem \cite[Thm. 7.7.13]{H3},
there exist $g\in{\mathcal O}(\mathbb{C}^{N})$, with $f=g$ on $V$,
and two constants $C'>0$ and $n\in\N$ such that 
\[
\underset{\mathbb{C}^{N}}{\sup}|g|e^{-\psi_{\alpha}-n\log(1+|\zeta|)}
\leq C'\underset{V}{\sup}|f|e^{-\psi_{\alpha}}.
\]

Since the right-hand side is finite because f satisfies
(\ref{eq:12}) on $V$, we have that 
\[
|g(\zeta)|\leq C''e^{\psi_{\alpha}(\zeta)+n\log(1+|\zeta|)}
\leq C'''e^{\psi_{\alpha'}(\zeta)}
\,\,\,\,\,\,\,\,\,\forall\zeta\in\mathbb{C}^{N}
\]
for some $C'',\,C'''>0$ and $\alpha'\in\N$. 
So $g\in{\mathcal O}_{\psi}(\mathbb{C}^{N})$.
\end{proof}

Proposition~\ref{lem:LEMMA3} will be crucial in the study of the homomorphism
(\ref{eq:9}) related to the study of existence and/or uniqueness 
of solutions of the
Cauchy problem (\ref{eq:sistema whitney}). 

To this aim we take $K_{1}$
and $K_{2}$ closed and convex sets, with $\overline{\mathring{K}_{j}}=K_{j}$
for $j=1,\,2$, and $K_{1}\subsetneq K_{2}$.

Then we define, for $j=1,\,2:$
\[
\psi_{\alpha}^{j}(\zeta):=H_{K_{\alpha}^{j}}(\Im\zeta)+\alpha\omega(\zeta)
\]
for $K_{\alpha}^{j}$ compact convex set with 
$K_{\alpha}^{j}\subset\mathring{K}_{\alpha+1}^{j}$
and $\underset{\alpha}{\cup}K_{\alpha}^{j}=K_{j}$, for each $j=1,2$.

We consider the inductive limits
\beqs
\label{G8}
{\mathcal O}_{\psi^{j}}(\mathbb{C}^{N}):=
\indlim_{\alpha\to+\infty}{\mathcal O}_{\psi_{\alpha}^{j}}(\mathbb{C}^{N}),\qquad
j=1,2.
\eeqs

From the above considerations we have the following:
\begin{Rem}
  \begin{em}
The study of the homomorphism \eqref{eq:9} is reduced to the study of 
the homomorphism
\begin{equation}
{\mathcal O}_{\psi^{1}}(V)\rightarrow{\mathcal O}_{\psi^{2}}(V).
\label{eq:14}
\end{equation}
  \end{em}
\end{Rem}

By Proposition~\ref{prop:esistenza} the existence of solutions of the Cauchy
problem \eqref{eq:sistema whitney} is equivalent to the surjectivity
of the homomorphism \eqref{G4}.
But \eqref{G4} has always a dense image, by the following:

\begin{Lemma}
  \label{lemma312}
  Let $\omega\in\W'$, $\wp$ a prime ideal and $K_{1},\,K_{2}$ closed
  convex subsets of $\R^N$ with
$K_{1}\subsetneq K_{2}$, $\overline{\mathring{K}_{j}}=K_{j}$ for
$j=1,\,2$. Then the homomorphism 
\begin{equation}
\Ext_{{\mathcal P}}^{0}\left({\mathcal P}\bigl/\wp,W_{K_{2}}^{(\omega)}\right)
\rightarrow\Ext_{{\mathcal P}}^{0}\left({\mathcal P}\bigl/\wp,W_{K_{1}}^{(\omega)}
\right)
\label{eq:8.1}
\end{equation}
has always a dense image.
\end{Lemma}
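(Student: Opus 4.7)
The plan is to prove density of the image by dualising and identifying the transpose with an obvious inclusion of weighted spaces of holomorphic functions on $V$. By Remark~\ref{remG11} applied with $\M=\mathcal P/\wp$, the image of (\ref{eq:8.1}) is dense if and only if the transpose
\[
\Ext_{\mathcal P}^{0}\!\left(\mathcal P/\wp, W_{K_{1}}^{(\omega)}\right)'
\longrightarrow
\Ext_{\mathcal P}^{0}\!\left(\mathcal P/\wp, W_{K_{2}}^{(\omega)}\right)'
\]
is injective; this is what I will establish.

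First, I would apply Proposition~\ref{lem:LEMMA3} to identify, for $j=1,2$, the dual $\Ext_{\mathcal P}^{0}(\mathcal P/\wp, W_{K_{j}}^{(\omega)})'$ with $\mathcal O_{\psi^{j}}(V)$. Tracing through the proofs of Lemma~\ref{lem:lemma2} and Proposition~\ref{lem:LEMMA3}, these identifications are natural and are realised by Fourier--Laplace transform of ultradistributions in $\E_{(\omega)}'(K_{j})$, followed by passage to the quotient modulo $\wp(D)\otimes \E_{(\omega)}'(K_{j})$. The restriction map $W_{K_{2}}^{(\omega)}\to W_{K_{1}}^{(\omega)}$ that induces (\ref{eq:8.1}) transposes to the tautological inclusion $\E_{(\omega)}'(K_{1})\hookrightarrow \E_{(\omega)}'(K_{2})$ (any compactly supported ultradistribution on $K_{1}$ is one on $K_{2}$, since $K_{1}\subsetneq K_{2}$), which after Fourier--Laplace and the quotient becomes the natural map $\mathcal O_{\psi^{1}}(V)\to \mathcal O_{\psi^{2}}(V)$ given by $u\mapsto u$.

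To conclude, I would choose the compact convex exhaustions $\{K_{\alpha}^{j}\}_{\alpha}$ of $K_{j}$ compatibly, so that $K_{\alpha}^{1}\subset K_{\alpha}^{2}$ for every $\alpha$ (which is possible as $K_{1}\subset K_{2}$). Then $H_{K_{\alpha}^{1}}(\Im\zeta)\leq H_{K_{\alpha}^{2}}(\Im\zeta)$, so $\psi_{\alpha}^{1}\leq \psi_{\alpha}^{2}$ and $\mathcal O_{\psi_{\alpha}^{1}}(V)\subset \mathcal O_{\psi_{\alpha}^{2}}(V)$. Taking inductive limits in $\alpha$ as in \eqref{G8} yields a canonical inclusion $\mathcal O_{\psi^{1}}(V)\hookrightarrow \mathcal O_{\psi^{2}}(V)$; being an inclusion, this map is trivially injective, and this is exactly the transpose of (\ref{eq:8.1}) under our identifications. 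The only technical point requiring care is verifying that the isomorphisms of Lemma~\ref{lem:lemma2} and Proposition~\ref{lem:LEMMA3} are natural enough to transport the transpose of (\ref{eq:8.1}) precisely to the inclusion of weighted holomorphic function spaces, but this naturality is built into the Fourier--Laplace construction and the Nullstellensatz/Ehrenpreis step.
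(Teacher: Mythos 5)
Your proof is correct and is in substance the same as the paper's: both are Hahn--Banach duality arguments whose technical core is the Nullstellensatz plus the Ehrenpreis Fundamental Theorem with weighted bounds, which in your version is packaged inside the appeal to Proposition~\ref{lem:LEMMA3} and in the paper's version is carried out directly on a functional $T$ annihilating the image (using that the exponentials $e^{-i\langle\cdot,\zeta\rangle}$, $\zeta\in V$, lie in that image, so $\hat T$ vanishes on $V$). The only point you gloss over --- naturality of the identifications, i.e.\ that the transpose really becomes the inclusion $\mathcal O_{\psi^1}(V)\hookrightarrow\mathcal O_{\psi^2}(V)$ --- is exactly what the paper's direct computation verifies, so the two arguments coincide.
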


\begin{proof}
By Lemma \ref{lem:lemma2} 
\[
\Ext_{{\mathcal P}}^{0}\left({\mathcal P}\bigl/\wp,W_{K_{1}}^{(\omega)}\right)'
\simeq\E_{(\omega)}'(K_{1})\bigl/\wp(D)\otimes\E_{(\omega)}'(K_{1}).
\]

Let $T\in\Ext_{{\mathcal P}}^{0}\left({\mathcal P}\bigl/\wp,W_{K_{1}}^{(\omega)}
\right)'$
which vanish on 
\[
\Ext_{{\mathcal P}}^{0}\left({\mathcal P}\bigl/\wp,W_{K_{2}}^{(\omega)}\right)=
\left\{ u\in W_{K_{2}}^{(\omega)}:\,p_{h}(D)u=0\,\,\,\forall h=1,2,\ldots,r
\right\},
\]
where $p_1(\zeta),\ldots,p_r(\zeta)$ are generators of $\wp$.
We must prove that $T\equiv0$.

By (\ref{eq:16}) we have that 
\[
\hat{T}(\zeta)=T(e^{-i<\cdot,\zeta)})=0\,\,\,\,\,\,\,\,\,\,\forall\zeta\in V,
\]
moreover by the Nullstellensatz (cf. \cite{T}) and the 
Ehrenpreis Fundamental Theorem (cf. \cite{H3}),
\[
T=\sum_{h=1}^{r}p_{h}(D)T_{h}
\]
for some $T_{h}\in\E_{(\omega)}'(K_{1})$, i.e. 
$T\in\wp(D)\otimes\E_{(\omega)}'(K_{1})$.
This shows that $T$ is identically zero as an element of the space
$\E_{(\omega)}'(K_{1})\bigl/\wp(D)\otimes\E_{(\omega)}'(K_{1})\simeq
\Ext_{{\mathcal P}}^{0}\left({\mathcal P}\bigl/\wp,W_{K_{1}}^{(\omega)}\right)'$,
and hence the homomorphism (\ref{eq:8.1}) has a dense image.
\end{proof}

\begin{Rem}
  \label{remG9}
  \begin{em}
    By Proposition~\ref{prop:esistenza} and Lemma~\ref{lemma312},
the Cauchy problem (\ref{eq:sistema whitney}) admits at least
a solution if and only if the homomorphism \eqref{G4} has a closed image,
i.e. if and only if the dual homomorphism \eqref{eq:9} has a closed
image, by \cite[Ch.~IV, \S~2, n.~4, Thm.~3]{Gr}
(see also Remark~\ref{remG11}).

By Proposition~\ref{lem:LEMMA3} we thus have that the Cauchy problem
(\ref{eq:sistema whitney}) admits at least a solution if and only if
the homomorphism (\ref{eq:14}) has a closed image.
  \end{em}
\end{Rem}

By Theorem 5.1 of \cite{BN3} this condition is equivalent
to the validity of the following Phragm\'en-Lindel\"{o}f principle:

\begin{Th}[Phragm\'en-Lindel\"of principle for holomorphic functions]
\label{thm:345} 
Let $V$ be a reduced affine
algebraic variety and ${\mathcal O}_{\psi^{1}}(V)$ and ${\mathcal O}_{\psi^{2}}(V)$
be defined as in \eqref{G8}. Then the following are equivalent:

(i) ${\mathcal O}_{\psi^{1}}(V)\hookrightarrow{\mathcal O}_{\psi^{2}}(V)$
has closed image;

(ii) $\forall\alpha\in\N$, $\exists\,\beta\in\N$ such that 
\[
{\mathcal O}_{\psi^{1}}(V)\cap{\mathcal O}_{\psi_{\alpha}^{2}}(V)
\subset{\mathcal O}_{\psi_{\beta}^{1}}(V);
\]

(iii) the following Phragm\'en-Lindel\"{o}f principle holds:
\begin{align*}
(Ph-L)_{O} & 
\begin{cases}
\forall\alpha\in\N,\,\,\exists\,\beta\in\N,\,\,C>0\,\,
\mbox{such that} & \mbox{}\\
\mbox{if }f\in{\mathcal O}(V)\,\,\mbox{satisfies for some costants }
\alpha_{f}\in\N,\,\,c_{f}>0\\
\begin{cases}
|f(\zeta)|\leq e^{\psi_{\alpha}^{2}(\zeta)}
\,\,\,\,\,\,\,\,\,\,\,\forall\zeta\in V\\
|f(\zeta)|\leq c_{f}e^{\psi_{\alpha_{f}}^{1}(\zeta)}
\,\,\,\,\,\,\,\,\,\,\,\forall\zeta\in V
\end{cases}\\
\mbox{then it also satisfies:}\\
|f(\zeta)|\leq Ce^{\psi_{\beta}^{1}(\zeta)}
\,\,\,\,\,\,\,\,\,\,\,\forall\zeta\in V.
\end{cases}
\end{align*}
\end{Th}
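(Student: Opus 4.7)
The plan is to apply Theorem~5.1 of \cite{BN3} directly, since that result is stated in exactly the generality needed here (weighted spaces of holomorphic functions on reduced affine algebraic varieties); I would also reproduce the short functional-analytic skeleton so the present statement is self-contained. First I record that each $\mathcal{O}_{\psi^j_\alpha}(V)$ is a Banach space under the weighted sup-norm $\|f\|_{j,\alpha}:=\sup_{\zeta\in V}|f(\zeta)|e^{-\psi^j_\alpha(\zeta)}$, so that $\mathcal{O}_{\psi^j}(V)=\indlim_{\alpha}\mathcal{O}_{\psi^j_\alpha}(V)$ is a countable (LB)-space. Since $K_1\subset K_2$ implies $H_{K^1_\alpha}\leq H_{K^2_\alpha}$ and hence $\psi^1_\alpha\leq\psi^2_\alpha$, the inclusion $\mathcal{O}_{\psi^1}(V)\hookrightarrow\mathcal{O}_{\psi^2}(V)$ is a well-defined continuous linear map, and the three conditions in the statement can then be organized as (iii)$\Rightarrow$(ii)$\Rightarrow$(iii) together with (i)$\Leftrightarrow$(ii).

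The direction (iii)$\Rightarrow$(ii) is immediate: any $f\in\mathcal{O}_{\psi^1}(V)\cap\mathcal{O}_{\psi^2_\alpha}(V)$ fulfils both hypotheses of $(Ph-L)_O$, where the constant in the second bound can be absorbed by slightly increasing $\alpha_f$, so the conclusion yields $f\in\mathcal{O}_{\psi^1_\beta}(V)$. For the converse (ii)$\Rightarrow$(iii) I would invoke the closed graph theorem. The intersection $\mathcal{O}_{\psi^1}(V)\cap\mathcal{O}_{\psi^2_\alpha}(V)$ is closed in the Banach space $\mathcal{O}_{\psi^2_\alpha}(V)$ (pointwise limits preserve a bound $|f|\leq c_f e^{\psi^1_{\alpha_f}}$), hence it is itself a Banach space; the set-theoretic inclusion into $\mathcal{O}_{\psi^1_\beta}(V)$ has closed graph because weighted sup-norm convergence on either side forces pointwise convergence on $V$. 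The closed graph theorem then produces the uniform constant $C>0$, which is exactly the quantitative statement (iii).

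The equivalence (i)$\Leftrightarrow$(ii) is the Grothendieck factorization principle for (LB)-spaces: the closedness of the image of $\mathcal{O}_{\psi^1}(V)$ in $\mathcal{O}_{\psi^2}(V)$ is equivalent to the fact that the subspace topology induced from $\mathcal{O}_{\psi^2}(V)$ coincides with the original (LB)-topology on $\mathcal{O}_{\psi^1}(V)$, and via the open mapping theorem in the (LB) setting this in turn translates to the statement that for each $\alpha$ the trace $\mathcal{O}_{\psi^1}(V)\cap\mathcal{O}_{\psi^2_\alpha}(V)$ is contained and bounded in a single step $\mathcal{O}_{\psi^1_\beta}(V)$, which is precisely (ii).

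The main obstacle is the nontrivial direction (ii)$\Rightarrow$(i), namely that the factorization property alone suffices for closedness of the image of the inductive limit. This is the content of the (LB)-space open mapping machinery and is carried out in detail in the proof of Theorem~5.1 of \cite{BN3}; I would therefore cite that result rather than repeat the abstract functional-analytic argument, since the variety $V$ plays no role beyond ensuring that $\mathcal{O}_{\psi^j}(V)$ are indeed countable (LB)-spaces of holomorphic functions, a fact already visible from the Ehrenpreis-type estimates used earlier in Proposition~\ref{lem:LEMMA3}.
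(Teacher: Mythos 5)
Your overall strategy coincides with the paper's: the paper gives no independent proof of Theorem~\ref{thm:345} and simply invokes Theorem~5.1 of \cite{BN3}, exactly as you propose for the hard implication. So as a citation the proposal is fine. However, the ``self-contained skeleton'' you add contains a genuine gap in the step (ii)$\Rightarrow$(iii), so you should either delete it or repair it rather than present it as a proof.

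Concretely: the claim that $\mathcal{O}_{\psi^1}(V)\cap\mathcal{O}_{\psi^2_\alpha}(V)$ is closed in the Banach space $\mathcal{O}_{\psi^2_\alpha}(V)$ because ``pointwise limits preserve a bound $|f|\leq c_f e^{\psi^1_{\alpha_f}}$'' is false as stated. If $f_n\to f$ in $\mathcal{O}_{\psi^2_\alpha}(V)$ with $|f_n|\leq c_n e^{\psi^1_{\alpha_n}}$, the constants $c_n$ and the indices $\alpha_n$ may blow up with $n$, and the limit need not lie in $\mathcal{O}_{\psi^1}(V)$ at all; pointwise limits only preserve a \emph{fixed} bound, i.e.\ each set $\{f:|f|\leq c\,e^{\psi^1_\gamma}\}$ is closed, not their union over $c$ and $\gamma$. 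Hence the closed graph theorem cannot be applied to the inclusion as you set it up. Moreover, even if you endow $\mathcal{O}_{\psi^1}(V)\cap\mathcal{O}_{\psi^2_\alpha}(V)$ with its natural (LB)-topology (union of the Banach spaces with norms $\max(\|\cdot\|_{2,\alpha},\|\cdot\|_{1,\gamma})$) and apply the closed graph theorem for webbed spaces, continuity of the inclusion into $\mathcal{O}_{\psi^1_\beta}(V)$ only yields an estimate $\|f\|_{1,\beta}\leq C_\gamma\max(\|f\|_{2,\alpha},\|f\|_{1,\gamma})$ with a constant depending on the step $\gamma$, i.e.\ on $\alpha_f$ and $c_f$. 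Condition (iii) demands a constant $C$ depending on $\alpha$ alone, uniformly over all $\alpha_f$ and $c_f$. Obtaining that uniformity is precisely the delicate point of \cite[Thm.~5.1]{BN3}; it exploits the fact that the normalized set $\{f\in\mathcal{O}(V):|f|\leq e^{\psi^2_\alpha}\ \mbox{on}\ V\}$ is compact for the topology of uniform convergence on compact subsets of $V$ (normal families), combined with a Baire-category argument on the increasing union of the $\tau_{co}$-closed convex balanced sets $\{f:|f|\leq e^{\psi^2_\alpha}\}\cap n\{f:|f|\leq e^{\psi^1_n}\}$ --- not a naive closed graph argument. A similar caveat applies to your (i)$\Leftrightarrow$(ii) paragraph: a closed subspace of an (LB)-space need not be an (LB)-space, so the ``Grothendieck factorization'' step also requires the de Wilde open mapping machinery rather than the Banach-space version. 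Since all of this is exactly what \cite[Thm.~5.1]{BN3} encapsulates, the safest course --- and the one the paper takes --- is to cite it for the whole equivalence.
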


Summarizing, by Remark~\ref{remG9} and Theorem~\ref{thm:345},
we have the following:

\begin{Th}[Phragm\'en-Lindel\"of principle for the existence
of solutions]
  \label{thm:3.4.8}
  Let $\omega\in\W'$.
The Cauchy pro\-blem (\ref{eq:sistema whitney}) admits \textbf{at least}
a solution if and only if the following Phragm\'en-Lindel\"{o}f principle
holds for all $\wp\in\Ass(\M)$ and $V=V(\wp)$ :
\begin{align*}
(Ph-L)_{O} & 
\begin{cases}
\forall\alpha\in\N,\,\,\exists\,\beta\in\N,\,\,C>0\,\,\mbox{such that} 
& \mbox{}\\
\mbox{if }f\in{\mathcal O}(V)\,\,\mbox{satisfies for some costants }
\alpha_{f}\in\N,\,\,c_{f}>0\\
\begin{cases}
|f(\zeta)|\leq\exp\left\{ H_{K_{\alpha}^{2}}(\Im\zeta)+\alpha\omega(\zeta)
\right\} \,\,\,\,\,\,\,\,\,\,\,\forall\zeta\in V\\
|f(\zeta)|\leq c_{f}\exp\left\{ H_{K_{\alpha_{f}}^{1}}(\Im\zeta)+\alpha_{f}
\omega(\zeta)\right\} \,\,\,\,\,\,\,\,\,\,\,\forall\zeta\in V
\end{cases}\\
\mbox{then it also satisfies:}\\
|f(\zeta)|\leq C\exp\left\{ H_{K_{\beta}^{1}}(\Im\zeta)+\beta\omega(\zeta)
\right\} \,\,\,\,\,\,\,\,\,\,\,\forall\zeta\in V.
\end{cases}
\end{align*}
\end{Th}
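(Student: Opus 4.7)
The plan is to chain together the equivalences already established in the excerpt, so the proof should amount almost entirely to bookkeeping. First, I would invoke Proposition~\ref{prop:esistenza}: existence of at least one solution of \eqref{eq:sistema whitney} is equivalent to the surjectivity, for every $\wp\in\Ass(\M)$, of the restriction homomorphism
\[
\Ext_{\mathcal P}^{0}\bigl(\mathcal P/\wp,W_{K_{2}}^{(\omega)}\bigr)\longrightarrow
\Ext_{\mathcal P}^{0}\bigl(\mathcal P/\wp,W_{K_{1}}^{(\omega)}\bigr).
\]

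Next, I would reduce surjectivity to the closed-image condition. By Lemma~\ref{lemma312} this homomorphism already has dense image, so by Remark~\ref{remG11} (i.e.\ the Grothendieck duality statement from \cite{Gr}, Ch.~IV, \S~2, n.~4, Thm.~3) its surjectivity is equivalent to the dual homomorphism
\[
\Ext_{\mathcal P}^{0}\bigl(\mathcal P/\wp,W_{K_{1}}^{(\omega)}\bigr)'\longrightarrow
\Ext_{\mathcal P}^{0}\bigl(\mathcal P/\wp,W_{K_{2}}^{(\omega)}\bigr)'
\]
having closed image. This is the point where Remark~\ref{remG9} enters: the duals have already been computed, via Proposition~\ref{lem:LEMMA3}, to be the spaces $\mathcal O_{\psi^{1}}(V)$ and $\mathcal O_{\psi^{2}}(V)$, with $V=V(\wp)$, so the closed-image condition becomes the statement that the natural inclusion $\mathcal O_{\psi^{1}}(V)\hookrightarrow\mathcal O_{\psi^{2}}(V)$ has closed image.

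Finally, I would invoke Theorem~\ref{thm:345}, borrowed from \cite[Thm.~5.1]{BN3}, which converts the closed-image condition into the Phragm\'en-Lindel\"of principle $(Ph-L)_{O}$: for each $\alpha\in\N$ there exist $\beta\in\N$ and $C>0$ such that every holomorphic $f\in\mathcal O(V)$ satisfying both $|f(\zeta)|\leq e^{\psi_{\alpha}^{2}(\zeta)}$ on $V$ and some a priori bound $|f(\zeta)|\leq c_{f}e^{\psi_{\alpha_{f}}^{1}(\zeta)}$ automatically satisfies $|f(\zeta)|\leq C e^{\psi_{\beta}^{1}(\zeta)}$. Spelling out the weights $\psi_{\alpha}^{j}(\zeta)=H_{K_{\alpha}^{j}}(\Im\zeta)+\alpha\omega(\zeta)$ then gives exactly the formulation stated in the theorem. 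Requiring this for every $\wp\in\Ass(\M)$ matches the quantification coming from Proposition~\ref{prop:esistenza}.

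There is no real obstacle in this proof: all the hard analytic work (the Paley-Wiener characterisation of $\mathcal E_{(\omega)}'(K)$ in Theorem~\ref{thm:prop 7.2-7.3}, the identification of duals in Proposition~\ref{lem:LEMMA3} via the Ehrenpreis Fundamental Theorem and Nullstellensatz, the density statement of Lemma~\ref{lemma312}, and the Phragm\'en-Lindel\"of transcription of Theorem~\ref{thm:345}) has already been done upstream. The only step one should double-check carefully is the dualisation of ``closed image'', making sure that the topologies on $\Ext_{\mathcal P}^{0}(\mathcal P/\wp,W_{K_{j}}^{(\omega)})$ are those of closed subspaces of the Fr\'echet spaces $W_{K_{j}}^{(\omega)}$, so that Grothendieck's theorem applies; this is implicit in the computation carried out in the proof of Lemma~\ref{lem:lemma2}.
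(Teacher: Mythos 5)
Your proposal is correct and follows exactly the paper's own route: the paper derives Theorem~\ref{thm:3.4.8} by combining Proposition~\ref{prop:esistenza}, Lemma~\ref{lemma312}, the Grothendieck closed-image duality of Remark~\ref{remG11}, the identification of the duals with ${\mathcal O}_{\psi^{j}}(V)$ from Proposition~\ref{lem:LEMMA3} (as summarized in Remark~\ref{remG9}), and finally Theorem~\ref{thm:345}. No discrepancies to report.
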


Let us now recall the definition of plurisubharmonic functions on
an affine algebraic variety $V\subset\mathbb{C}^{N}:$
\begin{Def}
\label{defpsh}
\begin{em}
A function $u:V\rightarrow[-\infty,+\infty)$
is called \emph{plurisubharmonic} on $V$ if it is locally bounded
from above, plurisubharmonic in the usual sense on $V_{{\rm reg}}$,
the set of all regular points of $V$, and satisfies 
\[
u(\zeta)=\underset{}{\underset{z\rightarrow\zeta}{\underset{z\in 
V_{{\rm reg}}}{\lim\,\sup}}}\,u(z)
\]
at the singular points of $V$.

By ${\psh(V)}$ we denote the set of all functions that are plurisubharmonic
on $V$.
\end{em}
\end{Def}

By Theorem 1.2 of \cite{BN2}, Theorem \ref{thm:3.4.8} is equivalent
to the following:

\begin{Th}[Phragm\'en-Lindel\"of principle for
    plurisubharmonic functions]
  Let $\omega\in\W'$.
The Cauchy problem (\ref{eq:sistema whitney}) admits \textbf{at least}
a solution if and only if the following Phragm\'en-Lindel\"{o}f principle
holds for all $\wp\in\Ass(\M)$ and $V=V(\wp)$ :
\begin{align*}
(Ph-L)_{{\psh}} & 
\begin{cases}
\forall\alpha\in\N,\,\,\exists\,\beta\in\N,\,\,C>0\,\,\mbox{such that} 
& \mbox{}\\
\mbox{if }u\in{\psh}(V)\,\,\mbox{satisfies for some costants }\alpha_{u}\in\N,
\,\,c_{u}>0\\
\begin{cases}
u(\zeta)\leq H_{K_{\alpha}^{2}}(\Im\zeta)+\alpha\omega(\zeta)
\,\,\,\,\,\,\,\,\,\,\,\forall\zeta\in V\\
u(\zeta)\leq H_{K_{\alpha_{u}}^{1}}(\Im\zeta)+\alpha_{u}\omega(\zeta)+c_{u}
\,\,\,\,\,\,\,\,\,\,\,\forall\zeta\in V
\end{cases}\\
\mbox{then it also satisfies:}\\
u(\zeta)\leq H_{K_{\beta}^{1}}(\Im\zeta)+\beta\omega(\zeta)+C
\,\,\,\,\,\,\,\,\,\,\,\forall\zeta\in V.
\end{cases}
\end{align*}
\end{Th}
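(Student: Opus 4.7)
The plan is to reduce the statement directly to Theorem~\ref{thm:3.4.8} via the equivalence between the Phragm\'en–Lindel\"of principle $(Ph-L)_{O}$ for holomorphic functions and the principle $(Ph-L)_{\psh}$ for plurisubharmonic functions on the variety $V=V(\wp)$. This equivalence, applied variety by variety for each $\wp\in\Ass(\M)$, is precisely Theorem~1.2 of \cite{BN2}; invoking it and chaining with Theorem~\ref{thm:3.4.8} yields the claim. I would nevertheless sketch both implications to show where the hypotheses are used.

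The implication $(Ph-L)_{\psh}\Rightarrow(Ph-L)_{O}$ is the straightforward half. Given $f\in{\mathcal O}(V)$ satisfying the two upper bounds of $(Ph-L)_{O}$, set $u:=\log|f|$. Then $u\in\psh(V)$ in the sense of Definition~\ref{defpsh}, since $\log|f|$ is plurisubharmonic on $V_{\rm reg}$ and automatically satisfies the upper semicontinuous extension condition at singular points of $V$. The two pointwise bounds on $|f|$ translate into
\[
u(\zeta)\leq H_{K_{\alpha}^{2}}(\Im\zeta)+\alpha\omega(\zeta),\qquad
u(\zeta)\leq H_{K_{\alpha_{f}}^{1}}(\Im\zeta)+\alpha_{f}\omega(\zeta)+\log c_{f},
\]
so the hypotheses of $(Ph-L)_{\psh}$ are met with $\alpha_{u}=\alpha_{f}$ and $c_{u}=\log c_{f}$. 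Exponentiating the conclusion $u\leq H_{K_{\beta}^{1}}(\Im\zeta)+\beta\omega(\zeta)+C$ gives $|f(\zeta)|\leq e^{C}\exp\{H_{K_{\beta}^{1}}(\Im\zeta)+\beta\omega(\zeta)\}$, i.e. $(Ph-L)_{O}$.

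The converse $(Ph-L)_{O}\Rightarrow(Ph-L)_{\psh}$ is the main obstacle, and this is exactly what is proved in \cite{BN2}. The idea is to approximate a given $u\in\psh(V)$ satisfying the hypotheses of $(Ph-L)_{\psh}$ by normalized logarithms $\frac{1}{m}\log|f_{m}|$ of holomorphic functions $f_{m}\in{\mathcal O}(V)$, constructed via H\"ormander-type $L^{2}$ estimates with plurisubharmonic weights adapted to $V$. One then applies $(Ph-L)_{O}$ to each $f_{m}$, obtaining bounds of the form $\frac{1}{m}\log|f_{m}|\leq H_{K_{\beta}^{1}}(\Im\zeta)+\beta\omega(\zeta)+C/m$, and passes to the upper envelope limit as $m\to\infty$ to recover the desired estimate on $u$. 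The delicate point is the construction of such $f_{m}$ on a possibly singular affine variety with the correct weight behavior; this is where the structure of $\omega\in\W'$ (in particular $(\delta)$, which guarantees that $\psi_{\alpha}^{j}$ is plurisubharmonic on $\mathbb{C}^{N}$, and $(\gamma)'$, which allows the polynomial factors arising in Ehrenpreis-type extensions to be absorbed into the weight as already exploited in Lemma~\ref{lem:lemma2}) is crucial.

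Once the equivalence $(Ph-L)_{O}\Leftrightarrow(Ph-L)_{\psh}$ is available on every $V(\wp)$, $\wp\in\Ass(\M)$, combining it with Theorem~\ref{thm:3.4.8} completes the proof.
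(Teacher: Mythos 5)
Your proposal follows exactly the paper's route: the paper proves this theorem by simply invoking Theorem~1.2 of \cite{BN2} to convert the holomorphic Phragm\'en--Lindel\"of principle of Theorem~\ref{thm:3.4.8} into the plurisubharmonic one, which is precisely your reduction. Your additional sketch of the two implications (the elementary $u=\log|f|$ direction and the approximation argument for the converse) is consistent with what is actually carried out in \cite{BN2}, so there is nothing to correct.
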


Also the problem of existence of a unique
solution of the Cauchy problem (\ref{eq:sistema whitney}) can be
easily treated by the study of the dual homomorphism (\ref{eq:9}).
In particular, by Propositions \ref{prop:3.3.7} and
\ref{lem:LEMMA3},
we have:

\begin{Th}
  Let $\omega\in\W'$.
The Cauchy problem (\ref{eq:sistema whitney}) admits \textbf{one
and only one} solution if and only if, for all $\wp\in\Ass(\M)$
and $V=V(\wp)$, the homomorphism
\[
{\mathcal O}_{\psi^{1}}(V)\hookrightarrow{\mathcal O}_{\psi^{2}}(V)
\]
is an isomorphism.
\end{Th}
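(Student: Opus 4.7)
The plan is to directly combine two earlier results. First, I would invoke item~(5) of Proposition~\ref{prop:3.3.7}, which asserts that the Cauchy problem (\ref{eq:sistema whitney}) admits one and only one solution if and only if, for every $\wp\in\Ass(\M)$, the dual homomorphism
\[
\Ext_{{\mathcal P}}^{0}\left({\mathcal P}\bigl/\wp,W_{K_{1}}^{(\omega)}\right)'
\longrightarrow
\Ext_{{\mathcal P}}^{0}\left({\mathcal P}\bigl/\wp,W_{K_{2}}^{(\omega)}\right)'
\]
is an isomorphism. Since existence and uniqueness have already been reduced, in Propositions~\ref{prop:esistenza} and \ref{prop:3.3.7}, to conditions on $\Ext^{0}$ and its dual, no further reduction of the Cauchy problem itself is needed at this stage.

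Next, I would apply Proposition~\ref{lem:LEMMA3} to both $K=K_1$ and $K=K_2$ in order to identify each side of the dual homomorphism, via the Fourier--Laplace transform, with $\mathcal O_{\psi^{1}}(V)$ and $\mathcal O_{\psi^{2}}(V)$ respectively (with $\psi^{j}$ defined as in the paragraph preceding \eqref{G8}). It is then necessary to check that, under these identifications, the map induced by dualizing the natural restriction $W_{K_{2}}^{(\omega)}\to W_{K_{1}}^{(\omega)}$ is precisely the inclusion $\mathcal O_{\psi^{1}}(V)\hookrightarrow\mathcal O_{\psi^{2}}(V)$. This is a routine verification: an element $T\in\E_{(\omega)}'(K_1)$, extended by zero, is in $\E_{(\omega)}'(K_2)$ with the same Fourier--Laplace transform $\hat T$; since $K_{\alpha}^{1}\subset K_{\alpha}^{2}$ gives $H_{K_{\alpha}^{1}}(\Im\zeta)\leq H_{K_{\alpha}^{2}}(\Im\zeta)$, the Paley--Wiener bound from Theorem~\ref{thm:prop 7.2-7.3} produces an estimate of type $\psi_{\alpha}^{1}$ which is automatically of type $\psi_{\alpha}^{2}$.

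Combining these two steps yields precisely the claimed equivalence: existence and uniqueness of a solution of (\ref{eq:sistema whitney}) is equivalent to $\mathcal O_{\psi^{1}}(V)\hookrightarrow\mathcal O_{\psi^{2}}(V)$ being an isomorphism for every $\wp\in\Ass(\M)$. I do not expect any genuine obstacle here, since the theorem is essentially a compilation of the dualization already worked out for the existence statement (Remark~\ref{remG9} and Theorem~\ref{thm:3.4.8}) together with the additional injectivity already available from Lemma~\ref{lemma312}; the only mildly subtle point is the naturality of the Fourier--Laplace identification with respect to the restriction maps, which is handled as sketched above.
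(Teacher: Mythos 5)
Your proposal is correct and follows exactly the paper's route: the paper deduces this theorem directly from Proposition~\ref{prop:3.3.7} (item~(5), the dual homomorphism being an isomorphism) together with Proposition~\ref{lem:LEMMA3} identifying both duals with $\mathcal O_{\psi^{1}}(V)$ and $\mathcal O_{\psi^{2}}(V)$ via the Fourier--Laplace transform. Your added check that the dualized restriction map corresponds to the natural inclusion is the same naturality point the paper leaves implicit.
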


By Theorem 5.2 of \cite{BN3} we can finally state the following:

\begin{Th}
  \label{thm:TEU2}
  Let $\omega\in\W'$.
The Cauchy problem (\ref{eq:sistema whitney}) admits
\textbf{one and only one }solution if and only if, for all $\wp\in\Ass(\M)$
and $V=V(\wp)$, one of the following equivalent conditions holds:

(i) ${\mathcal O}_{\psi^{1}}(V)\hookrightarrow{\mathcal O}_{\psi^{2}}(V)$
is an isomorphism;

(ii) $\forall\alpha\in\N$, $\exists\,\beta\in\N$ such that 
\[
{\mathcal O}_{\psi_{\alpha}^{2}}(V)\subset{\mathcal O}_{\psi_{\beta}^{1}}(V);
\]

(iii) $\forall\alpha\in\N$, $\exists\,\beta\in\N,\,\,C>0$ such that
\[
\underset{\zeta\in V}{\sup}\left|f(\zeta)e^{-H_{K_{\beta}^{1}}(\Im\zeta)
-\beta\omega(\zeta)}\right|\leq C\underset{\zeta\in V}{\sup}
\left|f(\zeta)e^{-H_{K_{\alpha}^{2}}(\Im\zeta)-\alpha\omega(\zeta)}\right|
\]
for all $f\in{\mathcal O}(V)$.
\end{Th}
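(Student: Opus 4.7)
My approach is to first reduce the theorem to a purely analytic statement about the weighted spaces $\mathcal O_{\psi^j}(V)$, then prove the three conditions (i), (ii), (iii) are equivalent by a functional-analytic argument using the (LB)-structure of these spaces.

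For the reduction, Proposition \ref{prop:3.3.7} tells us that the Cauchy problem (\ref{eq:sistema whitney}) has a unique solution if and only if the dual homomorphism
\[
\Ext_{{\mathcal P}}^{0}\left({\mathcal P}\bigl/\wp,W_{K_{1}}^{(\omega)}\right)'
\rightarrow\Ext_{{\mathcal P}}^{0}\left({\mathcal P}\bigl/\wp,W_{K_{2}}^{(\omega)}\right)'
\]
is an isomorphism for every $\wp\in\Ass(\M)$. Combining this with the identification of Proposition \ref{lem:LEMMA3}, which gives $\Ext_{{\mathcal P}}^{0}({\mathcal P}/\wp,W_{K_j}^{(\omega)})'\simeq{\mathcal O}_{\psi^j}(V)$, we immediately see that unique solvability is equivalent to (i). So the substantive work is the chain (i) $\Leftrightarrow$ (ii) $\Leftrightarrow$ (iii), which is cited from \cite{BN3}.

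For (i) $\Rightarrow$ (ii), I would argue as follows. Since $K_1\subsetneq K_2$, with proper choice of the compact exhaustions $K_\alpha^j$ we have $\psi_\alpha^1\leq\psi_\alpha^2$, so the inclusion ${\mathcal O}_{\psi^1}(V)\hookrightarrow{\mathcal O}_{\psi^2}(V)$ is well-defined and injective. Assuming (i), the inclusion is bijective at the level of the inductive limits. Fix $\alpha\in\N$. Each $f\in{\mathcal O}_{\psi_\alpha^2}(V)\subset{\mathcal O}_{\psi^2}(V)\cong{\mathcal O}_{\psi^1}(V)$ lies in some step ${\mathcal O}_{\psi_{\beta(f)}^1}(V)$. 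The plan is to upgrade this pointwise containment to a uniform $\beta$ by exploiting the fact that ${\mathcal O}_{\psi_\alpha^j}(V)$ are Banach spaces under the weighted sup-norm and the inductive limits are (LB)-spaces of Silva type; a Baire-category/Grothendieck factorization argument then forces the countable family of Banach inclusions ${\mathcal O}_{\psi_\alpha^2}(V)\subset\bigcup_\beta {\mathcal O}_{\psi_\beta^1}(V)$ to factor through a single step ${\mathcal O}_{\psi_\beta^1}(V)$.

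For (ii) $\Rightarrow$ (iii): the step inclusions ${\mathcal O}_{\psi_\alpha^2}(V)\hookrightarrow{\mathcal O}_{\psi_\beta^1}(V)$ are linear maps between Banach spaces whose graph is manifestly closed (both norms control pointwise convergence on $V$), so the closed graph theorem supplies a constant $C>0$ yielding exactly the estimate of (iii). The implication (iii) $\Rightarrow$ (ii) is immediate from the definition of the weighted sup-norms, and (ii) $\Rightarrow$ (i) follows because ${\mathcal O}_{\psi_\alpha^2}(V)\subset{\mathcal O}_{\psi_\beta^1}(V)\subset{\mathcal O}_{\psi^1}(V)$ at every step proves surjectivity of the inclusion, while injectivity was already observed.

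The main obstacle is the uniform $\beta$ in (i) $\Rightarrow$ (ii). Pointwise, inductive-limit membership gives only a $\beta$ depending on $f$; what makes the argument work is that $\mathcal O_{\psi^1}(V)$ and $\mathcal O_{\psi^2}(V)$ are well-behaved (LB)-spaces (in fact (DFS)-spaces, since the connecting maps into larger weights are compact by Montel's theorem applied to the weighted Bergman-type balls). In this setting one invokes a Baire-category / Grothendieck factorization theorem for (LB)-spaces to pass from a bijection of inductive limits to a stepwise containment. This is precisely the content of \cite[Thm.~5.2]{BN3}, so I would quote that result rather than reprove it, and content myself with explicitly assembling the reductions (Proposition \ref{prop:3.3.7}) $\to$ (Proposition \ref{lem:LEMMA3}) $\to$ (i) that reduce unique solvability to the cited Phragm\'en–Lindel\"of-type statement.
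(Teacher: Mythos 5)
Your proposal is correct and follows the paper's route exactly: the paper likewise reduces unique solvability to condition (i) by combining Proposition \ref{prop:3.3.7} with the identification of Proposition \ref{lem:LEMMA3}, and then cites Theorem 5.2 of \cite{BN3} for the equivalence of (i), (ii), (iii). Your additional sketch of that cited equivalence (Grothendieck factorization for the uniform $\beta$, closed graph theorem for the constant $C$) is a sound account of the standard argument behind it.
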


\begin{Rem}
\begin{em}
Clearly condition \emph{(i) }(resp. \emph{(ii)}, \emph{(iii)}) of
Theorem \ref{thm:TEU2} implies condition \emph{(i)} (resp. \emph{(ii)},
\emph{(iii)}) of Theorem \ref{thm:345}.
\end{em}
\end{Rem}

{\bf Acknowledgments:}
The authors are grateful to Prof. R.~Meise for his helpful suggestions 
about Theorem~\ref{thm:prop 7.2-7.3} and Lemma~\ref{lem:Lemma1}.

The first author is member of the Gruppo Nazionale per l'Analisi Matematica,
la Probabilit\`a e le loro Applicazioni (GNAMPA) of the Istituto Nazionale
di Alta Matematica (INdAM).


\begin{thebibliography}{AAAA}

\bibitem[AJO]{AJO}
  A.A.~Albanese, D.~Jornet, A.~Oliaro, {\em Wave front sets for
    ultradistribution solutions of linear partial differential operators
    with coefficients in non-quasianalytic classes}, Math. Nachr. {\bf 285},
  n.~4 (2012), 411-425.
 
\bibitem[AH\L M]{AHLM}
A.~Andreotti, C.D.~Hill, S.~\L ojasiewicz,
B.~Mackichan, \emph{Complexes of Differential operators. The Majer
Vietoris sequence}, Invent. Math., {\bf 26} (1976), 43-86.

\bibitem[AN]{AN2}
A.~Andreotti, M.~Nacinovich, \emph{Noncharacteristic
hypersurfaces for complexes of differential operators}, Ann. Mat.
Pure e Appl., \textbf{125}, n.4 (1980), 13-83.

\bibitem[Bj]{Bj}
G.~Bj\"{o}rck,\emph{ Linear partial differential operators and generalized
distributions}. Ark. Mat. \textbf{6} (1965), 351-407.

\bibitem[B]{B}
C.~Boiti, {\em Evolution for  Overdetermined Differential
Systems}, Ph-D thesis,
preprint of the Department of Mathematics of the University of Pisa
 n. 2.255.1002,
Sezione di Analisi Matematica e Probabilit\`a, December 1996.

\bibitem[BJO]{BJO}
C.~Boiti, D.~Jornet, A.~Oliaro, {\em Regularity of partial differential 
operators in ultradifferentiable spaces and Wigner type transforms}, 
in preparation.

\bibitem[BM]{BM}
C.~Boiti, M.~Meise, {\em Characterizing
the Phragm\'en-Lindel\"of condition for evolution on algebraic curves},
Math. Nachr. {\bf 284}, n. 10 (2011), pp. 1234-1269.

\bibitem[BN1]{BN1}
C.~Boiti, M.~Nacinovich, \emph{The overdetermined
Cauchy problem},  Ann. Inst. Fourier \textbf{47}, n.1 (1997),
155-199.

\bibitem[BN2]{BN2}
C.~Boiti, M.~Nacinovich, \emph{On the equivalence
of Phragm\'en-Lindel\"{o}f principles for holomorphic and plurisubharmonic
functions}, Le Matematiche, \textbf{55}, n.2 (2000), 55-70

\bibitem[BN3]{BN3}
C.~Boiti, M.~Nacinovich, \emph{The overdetermined
Cauchy problem in some classes of ultradifferentiable functions},
 Ann. Mat. Pura Appl. (4) {\bf 180}, n.~1 (2001), 81-126. 

\bibitem[BBMT]{BBMT}
J.~Bonet, R.W.~Braun, R.~Meise, B.A.~Taylor, \emph{Whitney's
extension theorem for nonquasianalytic classes of ultradifferentiable
functions},  Studia Math. \textbf{99}, n.2 (1991), 155-184.

\bibitem[BMM]{BMM} J.~Bonet, R.~Meise, S.N.~Melikhov,
{\em A comparison of two different ways to define classes of
ultradifferentiable functions}, Bull. Belg. Math. Soc. Simon Stevin {\bf 14} 
(2007),
425-444.

\bibitem[BL]{BL}
J.M.~Borwein, A.S.~Lewis, \emph{Convex analysis
and nonlinear optimization. Theory and Examples}, CMS Books in Mathematics,
Springer, 2006.

\bibitem[BMT]{BMT}
R.W.~Braun, R.~Meise, B.A.~Taylor, \emph{Ultradifferentiable
functions and Fourier analysis}, Results Math. \textbf{17}, n.~3-4
(1990), 207-237.

\bibitem[Fi]{Fi}
C.~Fieker, \emph{P-Konvexit\"{a}t und $\omega-$Hypoelliptizit\"{a}t
F\"{u}r partielle Differerentialoperatoren mit konstanten Koeffizienten},
Diplomarbeit, D\"{u}sseldorf, 1993.

\bibitem[G]{G}
E.~Gallucci, {\em The Cauchy problem for overdetermined systems in some 
classes of
ultradifferentiable functions}, Thesis for
the Corso di Laurea Ma\-gi\-stra\-le in Matematica, University of
 Ferrara, Eprints Unife n.~383 (2015), http://eprints.unife.it/1109/

\bibitem[Gr]{Gr}
A.~Grothendieck, {\em Espaces vectoriels topologiques}, Societade de
Matem\'atica de S. Paulo, S\~ao Paulo, 1958.

\bibitem[H]{H3}
L.~H\"{o}rmander, \emph{Complex analysis in several
variables}, $3^{rd}$ edition, North-Holland, Amsterdam 1991.

\bibitem[K]{K}
H.~Komatsu, \emph{Ultradistributions I. Structure
theorems and a characterization}. J. Fac. Sci. Tokyo Sec. IA \textbf{20}
(1973), 25-105.

\bibitem[M]{M}
B.~Malgrange, \emph{Ideals of differentiable functions},
Oxford University Press, London, 1966.

\bibitem[MT]{MT}
R.~Meise, B.A.~Taylor, \emph{Whitney's extension
theorem for ultradifferentiable functions of Beurling type}, Ark.
Mat., \textbf{26}, n.2 (1988), 265-287.

\bibitem[MV]{MV}
R.~Meise, D.~Vogt, {\em Introduction to Functional Analysis},
Oxford Science Publications, Oxford, 1997.

\bibitem[N1]{N1}
M.~Nacinovich, \emph{On boundary Hilbert differential
complexes},  Ann. Polon. Math. \textbf{46}, (1985)

\bibitem[N2]{N2}
M.~Nacinovich, \emph{Cauchy Problem For Overdetermined
Systems}, Ann. Mat. Pura Appl. \textbf{4}, n.156 (1990), 265-321.

\bibitem[T]{T}
J.C.~Tougeron, \emph{Id\'eaux de functions diff\'erentiables},
Springer-Verlag, Berlin (1972).

\end{thebibliography}
\end{document}